\documentclass[12pt]{amsart}
\usepackage{amsfonts}
\usepackage{graphicx}
\usepackage{tabularx}
\usepackage{array}
\usepackage[usenames,dvipsnames]{color}
\usepackage{comment}
\usepackage{amsmath}
\usepackage{amsthm}
\usepackage{amssymb}
\usepackage{fullpage}
\usepackage[dvipsnames]{xcolor}
\usepackage{tikz}
\usepackage{listings}
\usepackage{fontenc}

\DeclareMathOperator{\dist}{dist}
\DeclareMathOperator{\mad}{mad}
\DeclareMathOperator{\ch}{ch}
\DeclareMathOperator{\spl}{split}

\newtheorem{theorem}{Theorem}[section]

\newtheorem{observation}[theorem]{Observation}

\newtheorem{claim}[theorem]{Claim}
\newtheorem{lemma}[theorem]{Lemma}

\theoremstyle{definition}
\newtheorem{definition}[theorem]{Definition}

\def\epsilon{\varepsilon}

\title{Graphs of maximum average degree less than $\frac {11}{3}$ are flexibly $4$-choosable}
\author{Richard Bi}
\address{Department of Mathematics, University of Illinois Urbana-Champaign}
\email{rbi3@illinois.edu}

\author{Peter Bradshaw}
\address{Department of Mathematics, University of Illinois Urbana-Champaign}
\email{pb38@illinois.edu}
\thanks{Peter Bradshaw received funding from NSF RTG grant DMS-1937241.}

\begin{document}
\maketitle
\begin{abstract}
We consider the \emph{flexible list coloring} problem, in which we have a graph $G$, a color list assignment $L:V(G) \rightarrow 2^{\mathbb N}$, and a set $U \subseteq V(G)$ of vertices such that each $u \in U$ has a preferred color $p(u) \in L(u)$. Given a constant $\epsilon > 0$, the problem asks for an $L$-coloring of $G$ in which at least $\epsilon |U|$ vertices in $U$ receive their preferred color. 
We use a method of reducible subgraphs to approach this problem.
We develop a vertex-partitioning tool that, when used with a new reducible subgraph framework, allows us to define large reducible subgraphs. Using this new tool, we show that if $G$ has maximum average degree less than $\frac{11}{3}$, a list $L(v)$ of size $4$ at each $v \in V(G)$, and a set $U \subseteq V(G)$ of vertices with preferred colors, then there exists an $L$-coloring of $G$ for which at least $2^{-145} |U|$
vertices of $U$ receive their preferred color.
\end{abstract}
\section{Introduction}
\subsection{Background}
A proper coloring of a graph $G$ is a mapping $\phi:V(G) \rightarrow \mathbb N$ such that for each edge $uv \in E(G)$, $\phi(u) \neq \phi(v)$. A \emph{list assignment} on $G$ is a mapping $L:V(G) \rightarrow 2^{\mathbb N}$ which assigns a set $L(v) \subseteq \mathbb N$ to each vertex $v \in V(G)$. 
Given
a function $f:V(G) \rightarrow \mathbb N$, a list assignment $L:V(G) \rightarrow 2^{\mathbb N}$ is an \emph{$f$-assignment} if $|L(v)| = f(v)$ for each vertex $v \in V(G)$. If $f(v) = k$ for each vertex $v \in V(G)$ and $L$ is an $f$-assignment, then  $L$ is a \emph{$k$-assignment}.
Given a list assignment $L$ on $G$,
a proper coloring $\phi$ of $G$ is called an \emph{$L$-coloring} if $\phi(v) \in L(v)$ for each vertex $v \in V(G)$. The problem of determining whether a graph $G$ has an $L$-coloring for some list assignment $L$ is called the \emph{list coloring problem}.

The list coloring problem was first introduced by Vizing \cite{Vizing} and independently by Erd\H{o}s, Rubin, and Taylor. Vizing \cite{Vizing} showed that a list coloring of a complete graph can be equivalently described as a system of distinct representatives of a set family, and Erd\H{o}s, Rubin, and Taylor \cite{ERT} observed that the list coloring problem for complete bipartite graphs is closely related to the proper $2$-coloring problem for regular hypergraphs.
They also defined the notion of a graph's \emph{choosability} as follows. Given a graph $G$, $G$ is \emph{$k$-choosable} if $G$ has an $L$-coloring for every $k$-assignment $L:V(G) \rightarrow  2^{\mathbb N}$. Then, the choosability of $G$ is the minimum integer $k$ for which $G$ is $k$-choosable. 
We write $\ch(G)$ for the choosability of $G$.
If $L(v) = \{1,\dots,k\}$ for each vertex $v \in V(G)$, then the question of whether $G$ is $L$-colorable is equivalent to the question of whether $G$ is properly $k$-colorable. Therefore, $\ch(G) \geq \chi(G)$.

Given a $k$-choosable graph $G$ and a $k$-assignment $L$ on $G$, 
it is natural to investigate 
the diversity of
the set of $L$-colorings of $G$.
When $G$ is planar, 
one common question asks for the
number of $L$-colorings of $G$.
For each $5$-assignment $L$ on a planar graph $G$,
Thomassen \cite{Thomassen5} proved 
that $G$ has at least one $L$-coloring, and he later proved \cite{ThomassenExp5} that $G$ has at least $2^{|V(G)|/9}$ $L$-colorings.
Using the polynomial method, Bosek et al.~\cite{Bosek} proved further that $G$ in fact has at least $5^{|V(G)|/4}$ $L$-colorings. Similarly, 
for each $3$-assignment $L$ on a planar graph $G$ of girth at least $5$,
Thomassen \cite{Thomassen3} proved that $G$ has at least one $L$-coloring, and he lated proved \cite{Thomassen3Exp} that $G$ in fact has at least $2^{|V(G)| / 10000}$ $L$-colorings.
Again using the polynomial method,
Bosek et al.~\cite{Bosek} showed more strongly that $G$ has at least $3^{|V(G)|/6}$ $L$-colorings. Recently, Postle and Roberge \cite{PR} used a hyperbolicity method 
to obtain similar results in the more general setting of correspondence colorings.

Given a graph $G$ and a list assignment $L$,
another way of investigating the diversity of $L$-colorings of $G$ is through the \emph{flexible list coloring} problem of Dvo\v{r}\'ak, Norin, and Postle \cite{DNP}, 
in which
one assigns a preferred color $p(v) \in L(v)$ to some of the vertices $v \in V(G)$ and then asks
whether $G$ has an $L$-coloring in which many vertices $v$ with a color preference receive their preferred color $p(v)$.
Formally, a
\emph{weighted request}
on a graph $G$
with a list assignment $L$
is a function $w$ such that for each vertex $v \in V(G)$ and color $c \in L(v)$, $w$ maps the pair $(v,c)$ to a nonnegative real number $w(v,c)$.
Given a value $\epsilon > 0$, we say that $G$ is \emph{weighted $\epsilon$-flexibly
$k$-choosable} if for every $k$-assignment $L$ and weighted request $w$ on $G$, there exists an $L$-coloring $\phi$ of $G$ such that 
\begin{eqnarray}
\label{eqn:epsilon}
\sum_{v \in V(G)} w(v,\phi(v)) \geq \epsilon \sum_{v \in V(G)} \sum_{c \in L(v)} w(v,c).
\end{eqnarray}
In other words, the weight of the pairs $(v,c)$ for which $\phi(v) = c$ is at least an $\epsilon$ proportion of the weight of all pairs $(v,c)$.

We say that $w$ is an \emph{(unweighted) request} if for each $v \in V(G)$, $w(v,c) = 1$ for at most one color $c \in L(v)$, and $w(v,c') = 0$ for all other colors $c' \in L(v)$.
Given a graph $G$, if there exists a value $\epsilon > 0$ such that the inequality (\ref{eqn:epsilon}) holds for every $k$-assignment $L$ on $G$ and every unweighted request $w$ on $G$, then we say that $G$ is \emph{$\epsilon$-flexibly $k$-choosable}.

One closely related question to the flexible list coloring problem asks the following: Given a graph $G$, what is the smallest value $k$ for which $G$ is weighted $\frac 1k$-flexibly $k$-choosable? This value $k$ is equivalent to the \emph{fractional list packing number}
introduced by Cambie, Cames van Batenburg, Davies, and Kang
\cite{CCDK}
and later studied by Cambie, Cames van Batenburg, and Zhu \cite{CCZ}.
Cambie, Cames van Batenburg,
Davies, and Kang \cite{CCDK}
showed that if $G$ has maximum degree $\Delta$, then $G$ is weighted $\frac{1}{\Delta+1}$-flexibly $(\Delta+1)$-choosable, implying that the fractional list packing number of every graph is well defined.
Cambie, Cames van Batenburg, and Zhu
\cite{CCZ}
further showed that if $G$ is planar, then $G$ is weighted $\frac 15$-flexibly $5$-choosable when $G$ is triangle-free, weighted $\frac 14$-flexibly $4$-choosable when $G$ has girth at least $5$, and weighted $\frac 13$-flexibly $3$-choosable when $G$ has girth at least $6$.

A graph $G$ is \emph{$d$-degenerate} if every nonempty subgraph of $G$ has a vertex of degree at most $d$, and a greedy argument shows that a $d$-degenerate graph is $(d+1)$-choosable.
Dvo\v{r}\'ak, Norin, and Postle \cite{DNP} asked whether every $d$-degenerate graph is $\epsilon$-flexibly $(d+1)$-choosable for some constant $\epsilon = \epsilon(d) > 0$, and they showed that every $d$-degenerate graph is $\frac{1}{2d^{2d}}$-flexibly $(d+2)$-choosable. This question is open for all values $d \geq 2$.

While proving the flexibly $(d+1)$-choosability of all $d$-degenerate graphs seems difficult, even for a fixed value $d \geq 2$, many existing results show that a large class $\mathcal G$ of $d$-degenerate graphs is $\epsilon$-flexibly $(d+1)$-choosable for some universal constant $\epsilon > 0$ \cite{BMS}. For example, the class of non-regular graphs of maximum degree $\Delta \geq 3$ is a $(\Delta-1)$-degenerate class which is weighted $\frac{1}{2\Delta^4}$-flexibly $\Delta$-choosable. Additionally, the class of triangle-free planar graphs is $3$-degenerate and weighted $2^{-126}$-flexibly $4$-choosable \cite{DMMP-triangle-free}. 

The degeneracy of a graph $G$ is closely related to 
the \emph{maximum average degree} of $G$, written $\mad(G)$, 
defined as 
the maximum value $\frac{2|E(H)|}{|V(H)|}$, where the maximum is taken over all nonempty subgraphs $H \subseteq G$.
In particular, if $G$ has maximum average degree $\overline{d}$, then every nonempty subgraph of $G$ has average degree at most $\overline{d}$ and hence has a vertex of degree at most $\lfloor \overline{d} \rfloor$. Therefore, the degeneracy of $G$ is at most $\lfloor \mad(G) \rfloor$. Hence, for each integer $d \geq 1$ and constant $c \in (0,1]$, the class of graphs of maximum average degree less than $d + c$ is a $d$-degenerate graph class.
The following theorem of Dvo\v{r}\'ak, Norin, and Postle shows that when $c = \frac{2}{d+3}$, the associated $d$-degenerate graph class is $\epsilon$-flexibly $(d+1)$-choosable for some $\epsilon = \epsilon(d) > 0$.

\begin{theorem}[\cite{DNP}] 
\label{thm:2d3}
For each $d \geq 2$, if $G$ is a graph with maximum average degree less than $d + \frac{2}{d+3}$, then $G$ is weighted $\epsilon$-flexibly $(d+1)$-choosable for some constant $\epsilon = \epsilon(d) > 0$. 
\end{theorem}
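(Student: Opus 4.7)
The plan is to prove this by induction on $|V(G)|$ using the method of reducible subgraphs. I would identify a small family $\mathcal{F}$ of configurations (small induced subgraphs $H$ together with a specified interface to $G - V(H)$) having two properties: (i) every graph $G$ with $\mad(G) < d + \frac{2}{d+3}$ contains at least one $H \in \mathcal{F}$, and (ii) whenever $H \in \mathcal{F}$ is contained in $G$, for every $(d+1)$-assignment $L$ on $G$ and every weighted request $w$, any $L$-coloring of $G - V(H)$ can be extended to $V(H)$ in such a way that, in expectation over a suitable randomized extension, a constant fraction of the total request weight on $V(H)$ is satisfied.

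Given such a family, the inductive step is straightforward. Pick $H \in \mathcal{F}$ in $G$, apply the hypothesis to $G - V(H)$ with the inherited list assignment and restricted request (obtaining an $L$-coloring that satisfies at least an $\epsilon'$-fraction of the request weight on $V(G) \setminus V(H)$), and then extend the coloring to $V(H)$ using property (ii). By linearity of expectation, a single good realization of the randomized extension satisfies a constant fraction of the remaining requests. Taking $\epsilon(d)$ to be the minimum of the constants arising from the two parts closes the induction.

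For the specific configurations, the natural candidates for $\mathcal{F}$ are: (a) a vertex $v$ of degree at most $d-1$, which has at least two available colors after its neighbors are precolored and so can be assigned its preferred color with probability at least $\tfrac{1}{d+1}$; and (b) small subgraphs built from mutually adjacent degree-$d$ vertices, such as an edge whose endpoints both have degree $d$ or short paths of such vertices, whose combined list structure admits enough slack for a randomized extension. The constant $\frac{2}{d+3}$ in the hypothesis is tuned precisely so that a discharging argument (assign charge $\deg(v)$ to each vertex, then redistribute via rules that push charge from higher-degree vertices to nearby degree-$d$ vertices that would otherwise form a configuration in $\mathcal{F}$) produces a contradiction when $\mathcal{F}$ is avoided.

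The main obstacle is striking the right balance in defining $\mathcal{F}$: the configurations must be rich enough that the discharging argument, under the specific bound $d + \frac{2}{d+3}$, forces one of them to appear, yet structured enough that a randomized extension provably satisfies a constant fraction of requests. The probabilistic extension on type (b) configurations is the technically delicate part, since one must verify that the random choices of colors on $V(H)$ retain enough independence and enough list slack to simultaneously match a constant fraction of preferences on several vertices, uniformly over all precolorings of $G - V(H)$. This typically requires a careful case analysis of how adjacent degree-$d$ vertices share colors in their lists and how the precoloring of the interface can restrict the available palette.
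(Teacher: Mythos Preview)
The paper does not prove this theorem; it is quoted from \cite{DNP} and only used as background. There is therefore no proof in the paper to compare your proposal against.

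That said, your outline has a real gap that is worth naming, because the paper's own reducibility framework (Definition~\ref{def:fixforb} and Lemma~\ref{lem:main-k-red}) makes the issue explicit. Your inductive hypothesis---that $G\setminus V(H)$ admits an $L$-coloring satisfying an $\epsilon'$-fraction of the request weight on $V(G)\setminus V(H)$---is too weak to close the induction. When you then try to extend to $V(H)$, a vertex $v\in V(H)$ with preference $c$ may have neighbors in $G\setminus V(H)$ that were assigned $c$; in that case no extension can give $v$ the color $c$, and your property~(ii) (``a constant fraction of the total request weight on $V(H)$ is satisfied \dots\ uniformly over all precolorings of $G-V(H)$'') is simply false. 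A degree-$d$ vertex $v$ can be left with a single available color after its $G\setminus V(H)$-neighbors are precolored, so the probability of matching an arbitrary preference at $v$ can be zero.

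The remedy, which is exactly what the paper's framework supplies and what the original argument in \cite{DNP} uses, is to strengthen the inductive hypothesis to a \emph{distributional} statement: there is a probability distribution on $L$-colorings of $G\setminus V(H)$ satisfying not only a (FIX)-type bound $\Pr(\phi(v)=c)\ge\epsilon$ but also a (FORB)-type bound $\Pr(\phi(u)\neq c\ \forall u\in U)\ge\alpha^{|U|}$ for small sets $U$. The (FORB) bound on $N_{G\setminus H}(v)$ is what guarantees, with positive probability, that the preferred color $c$ survives at $v$ before the extension step; only then does a random extension on $H$ give $\Pr(\phi(v)=c)\ge\epsilon$. Your discharging sketch and choice of configurations are on the right track, but without carrying (FORB) through the induction the argument does not go through.
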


The proof of Theorem \ref{thm:2d3} from \cite{DNP} is simple and does not attempt
to maximize the quantity $d + \frac{2}{d+3}$, and this quantity is almost certainly not best possible for any $d \geq 2$. 
Hence, it is natural to ask for the maximum value $c \in (0,1]$ such that every graph with maximum average degree less than $d + c$ is $\epsilon$-flexibly $(d+1)$-choosable for some $\epsilon = \epsilon(d) > 0$. 
The authors \cite{RBPB} recently showed that the answer to this question for $d = 2$ is $c=1$, proving that
 every graph with maximum average degree less than $3$ is $2^{-32}$-flexibly $3$-choosable. As $K_4$ has maximum average degree exactly $3$ and is not $3$-choosable, this upper bound on maximum average degree is best possible.

\subsection{Our results}
When $d = 3$, Theorem \ref{thm:2d3} implies that a graph of maximum average degree less than $\frac{10}{3}$ is weighted $\epsilon$-flexibly $4$-choosable for some $\epsilon > 0$. 
The main result of this paper improves this maximum average degree bound.

\begin{theorem}
    \label{thm:mad113}
    There exists a constant $\epsilon > 0$ for which the following holds.
    If $G$ is a graph with maximum average degree less than $\frac{11}{3}$, then $G$ is weighted $\epsilon$-flexibly $4$-choosable.
\end{theorem}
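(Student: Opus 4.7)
The plan is to prove Theorem~\ref{thm:mad113} via the Dvo\v{r}\'ak--Norin--Postle framework of \emph{flexibly reducible subgraphs}, augmented by the new vertex-partitioning tool announced in the abstract. The argument splits into two interlocking pieces: a \emph{structural step}, showing that every graph $G$ with $\mad(G) < \tfrac{11}{3}$ contains at least one configuration from a carefully chosen family $\mathcal{F}$ of subgraphs (specified by the $G$-degrees of their vertices), and a \emph{coloring step}, showing that each $H \in \mathcal{F}$ is flexibly reducible, meaning that for any $4$-assignment $L$, any weighted request $w$, and any extendable $L$-coloring of $G - V(H)$, one can extend to $H$ while honoring at least a constant fraction of the $w$-weight inside $H$. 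Given these two ingredients, Theorem~\ref{thm:mad113} follows by induction on $|V(G)|$: find some $H \in \mathcal{F}$, color $G - V(H)$ inductively so that an $\epsilon'$-fraction of the outside weight is realized, and then apply the extension lemma for $H$.

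For the structural step, I would run a discharging argument with initial charge $\mu(v) = d(v) - \tfrac{11}{3}$, whose sum over $V(G)$ is negative because $\mad(G) < \tfrac{11}{3}$. Vertices of degree $2$ or $3$ carry charges $-\tfrac{5}{3}$ and $-\tfrac{2}{3}$ respectively, while vertices of degree $k \geq 4$ start with positive charge $k - \tfrac{11}{3}$. The discharging rules would shuttle charge from vertices of degree $\geq 4$ to nearby vertices of degree $\leq 3$, calibrated so that, assuming no configuration from $\mathcal{F}$ appears in $G$, every vertex ends with nonnegative charge, contradicting the global deficit. Designing $\mathcal{F}$ so that the rules exactly balance at the threshold $\tfrac{11}{3}$ is the classical side of the work and will require $\mathcal{F}$ to contain reasonably elaborate subgraphs centered on dense constellations of $3$-vertices, rather than just short paths of low-degree vertices as in weaker thresholds like the $\tfrac{10}{3}$ bound of Theorem~\ref{thm:2d3}.

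For the coloring step, the standard approach for a small reducible $H$ is to $L$-color $G - V(H)$, observe that the remaining list at each $v \in V(H)$ has size at least $4 - (d_G(v) - d_H(v))$, and then use a random extension to show that each requested color inside $H$ is realized with probability bounded below by a constant. For the larger configurations that the $\tfrac{11}{3}$ threshold forces into $\mathcal{F}$, this naive extension breaks down because the events ``$v$ gets its requested color'' become strongly correlated across $H$. Here I would invoke the new vertex-partitioning tool to stratify $V(H)$ into layers $V_0, V_1, \ldots, V_t$ so that each vertex has only a controlled number of neighbors inside its own layer, and then extend the coloring layer-by-layer, using the independence between layers to preserve a uniform constant lower bound on the probability of satisfying any single request regardless of $|V(H)|$.

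The main obstacle is the coloring step for the non-trivial members of $\mathcal{F}$: proving flexible reducibility for large $H$ with a uniform $\epsilon$. Two difficulties stand out. First, one must show that the partial $L$-coloring of $G - V(H)$ leaves every $v \in V(H)$ with enough free colors to initiate the random extension; this is where the degree profile forcing $H$ into $G$ has to match the list-size arithmetic exactly. Second, when $H$ is large, the probability of satisfying a given inside request is a product of many conditionally dependent factors, and keeping it bounded below by a constant independent of $|V(H)|$ is precisely what the layered partitioning tool is designed to arrange. Once the extension lemma is established for every configuration type in $\mathcal{F}$, the inductive assembly yields the claimed constant $\epsilon$ and completes the proof.
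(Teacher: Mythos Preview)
Your outline captures the paper's high-level architecture: discharging with initial charge $\deg(v)-\tfrac{11}{3}$, a family of reducible configurations, and a partition tool enabling arbitrarily large reducible subgraphs. But the proposal is a plan rather than a proof, and two substantive points diverge from what the paper actually does. First, your partition tool---layers $V_0,\dots,V_t$ colored sequentially with ``independence between layers''---is not the paper's mechanism. Lemma~\ref{lem:orig_partition} instead partitions $H$ into \emph{bounded-size} parts (each with at most $b$ vertices) that must individually satisfy (FIX') and (FORB-$2$); parts are labeled via a proper coloring of the square of an auxiliary graph, and the random coloring proceeds by label class. The uniform constant comes from the bounded size of each part, not layer independence. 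You also omit the (FORB) condition entirely: reducibility requires not only that each vertex receive each color with probability $\geq\alpha$, but that any \emph{pair} of vertices simultaneously avoid any given color with probability $\geq\alpha$. This is what closes the induction in Lemma~\ref{lem:main-k-red}, and it is the harder condition to verify.

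Second, all of the structural content is absent, and this is where the real work lies. The paper's discharging has just three rules, the nontrivial one (R3) sending $\tfrac13$ from an \emph{insulated} vertex to each \emph{stressed} $4$-vertex conductively connected to it---notions (stressed, conductive, insulated, conductive path) you never introduce. The core of the argument is the chain of global lemmas (Lemmas~\ref{lem:con-stressed}, \ref{lem:stressed-insulated}, \ref{lem:at-most-one-stressed}, \ref{lem:4-t}) establishing that every stressed vertex can pull this charge and that no insulated vertex is overdrawn; proving these requires not only the partition lemma but an additional split-graph device (Lemma~\ref{lem:crossing-over}) for which your outline has no analogue. (Also, there are no $2$-vertices: Lemma~\ref{lem:mindeg3} disposes of them immediately, so your charge $-\tfrac53$ never arises.) In short, the skeleton is right, but the identification of the reducible family, the (FORB) verification, and the conductive-path discharging are the proof, and none of them is present.
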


In particular, we show that Theorem \ref{thm:mad113} holds with a constant $\epsilon = 2^{-145}$. We make no effort to optimize $\epsilon$.
To prove Theorem \ref{thm:mad113}, we consider a graph $G$ of maximum average degree less than $\frac{11}{3}$, along with a $4$-assignment $L$ on $G$. Then, we aim to construct a distribution on $L$-colorings of $G$ such that for each $v \in V(G)$ and $c \in L(v)$, $v$ receives the color $c$ with probability at least $\epsilon$.
The following Lemma of Dvo\v{r}\'ak, Norin, and Postle shows that the existence of such a distribution on $L$-colorings is sufficient to prove the weighted $\epsilon$-flexible $4$-choosability of $G$.

\begin{lemma}[\cite{DNP}] \label{lem:prob} 
    Let $G$ be a graph, and let $k$ be a positive integer. Suppose that for every $k$-assignment $L$ on $G$, there exists a probability distribution on $L$-colorings $\phi$ of $G$ such that for each vertex $v \in V(G)$ and color $c \in L(v)$, 
    \[\Pr(\phi(v) = c) \geq \epsilon.\]
    Then, 
    $G$ is weighted $\epsilon$-flexibly $k$-choosable.
\end{lemma}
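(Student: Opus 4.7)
The plan is a direct application of linearity of expectation to the hypothesized probability distribution. Fix a $k$-assignment $L$ and a weighted request $w$ on $G$. Let $\phi$ be a random $L$-coloring drawn from the distribution guaranteed by the hypothesis, so that $\Pr(\phi(v)=c) \geq \epsilon$ for every $v \in V(G)$ and every $c \in L(v)$. The goal is to show that some realization of $\phi$ satisfies the inequality (\ref{eqn:epsilon}); equivalently, I will show that the expected value of $\sum_{v} w(v,\phi(v))$ already exceeds the threshold $\epsilon \sum_v \sum_{c \in L(v)} w(v,c)$.

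First I would expand the expectation of the weight collected by $\phi$ using linearity and the law of total expectation over the color assigned to each vertex:
\[
\mathbb{E}\!\left[\sum_{v \in V(G)} w(v,\phi(v))\right] \;=\; \sum_{v \in V(G)} \sum_{c \in L(v)} w(v,c)\,\Pr(\phi(v)=c).
\]
Since $w(v,c) \geq 0$ for every pair $(v,c)$ by the definition of a weighted request, I can then apply the hypothesis $\Pr(\phi(v)=c) \geq \epsilon$ termwise to conclude
\[
\mathbb{E}\!\left[\sum_{v \in V(G)} w(v,\phi(v))\right] \;\geq\; \epsilon \sum_{v \in V(G)} \sum_{c \in L(v)} w(v,c).
\]
By a standard averaging argument, there exists some outcome $\phi^{\ast}$ in the support of the distribution for which $\sum_{v} w(v,\phi^{\ast}(v))$ meets or exceeds this expected value. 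Because the distribution is supported entirely on $L$-colorings, $\phi^{\ast}$ is itself an $L$-coloring, so $\phi^{\ast}$ witnesses the required inequality (\ref{eqn:epsilon}) for the given $L$ and $w$.

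There is essentially no obstacle here: the lemma reduces to an averaging argument over a nonnegative objective, with the key observation being that nonnegativity of $w$ lets the pointwise lower bound $\Pr(\phi(v)=c) \geq \epsilon$ pass through the double sum unharmed. The only thing to be slightly careful about is the quantifier order. The $L$-coloring $\phi^{\ast}$ is allowed to depend on both $L$ and $w$, and since the hypothesis supplies a distribution for every $k$-assignment $L$, the argument goes through for every pair $(L,w)$, which is exactly what the definition of weighted $\epsilon$-flexible $k$-choosability demands.
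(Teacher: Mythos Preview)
Your proof is correct and is exactly the standard linearity-of-expectation argument for this lemma. The paper does not actually supply a proof of this statement: it is quoted with a citation to \cite{DNP}, so there is nothing to compare against beyond noting that your argument is the intended one.
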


Given 
a graph $G$ and a $4$-assignment $L$ on $G$, we 
aim to construct a distribution on $L$-colorings of $G$ using a
framework of reducible subgraphs. In a graph coloring problem, an induced subgraph $H$ of a graph $G$ is often called \emph{reducible} if a solution to the coloring problem on $G \setminus H$ also guarantees a solution to the coloring problem on $G$. For example,
when attempting to find an $L$-coloring of a graph $G$,
an induced subgraph $H \subseteq G$ may be called reducible if every $L$-coloring of $G \setminus H$ extends to an $L$-coloring of $G$. In our flexible list coloring setting, we roughly say that $H$ is reducible if every suitable distribution on $L$-colorings of $G \setminus H$ extends to a suitable distribution on $L$-colorings of $G$.
To prove Theorem \ref{thm:mad113}, we show that a minimum counterexample $G$ to the theorem contains no reducible subgraph and therefore satisfies certain structural properties. After these structural properties are established, a short discharging argument shows that $\mad(G) \geq \frac{11}{3}$, contradicting the initial assumption that $G$ violates Theorem \ref{thm:mad113}.

The specific reducibility framework that we use
in this paper is a special case of the reducibility framework introduced by the authors
\cite{RBPB}. 
The framework that we use here and in \cite{RBPB} 
generalizes a previous reducibility framework of Dvo\v{r}\'ak, Masa\v{r}\'ik, Mus\'ilek, and Pangr\'ac \cite{DMMP-triangle-free}.
The main difference between our current reducibility framework and that of \cite{DMMP-triangle-free} is that by 
adding stronger requirements for a subgraph to be reducible, our framework allows for arbitrarily large reducible subgraphs, whereas the framework of \cite{DMMP-triangle-free} only allows reducible subgraphs of bounded size. 

One additional contribution of this paper is a lemma (Lemma \ref{lem:orig_partition}) that facilitates the identification of large reducible subgraphs. The lemma essentially states
that if  an induced subgraph $H$ of $G$
can be partitioned into small parts satisfying certain properties, then $H$ is a reducible subgraph.
This lemma allows us 
to 
identify families of reducible subgraphs of unbounded size based on their global properties, which ultimately
gives us a simple discharging argument.
Furthermore, the lemma allows us to avoid computer-assisted proofs and to use ideas that one can check by hand.

\subsection{Outline of paper}

The paper is structured as follows. In Section \ref{sec:framework}, we define the reducible subgraph framework that we use throughout the paper, and we establish some lemmas which will help us identify reducible subgraphs. In Section \ref{sec:tools}, we establish a partition lemma (Lemma \ref{lem:orig_partition}), which allows us to show that large subgraphs are reducible whenever their vertices can be partitioned into small parts satisfying certain properties. 
The proofs in Section \ref{sec:tools} 
are routine but often tedious, and the impatient reader may skip them without missing any main ideas.
In Section \ref{sec:reducible}, we consider a minimum counterexample $G$ to Theorem \ref{thm:mad113}, and with the help of our partition lemma, we establish strong structural properties of $G$. Finally, in Section \ref{sec:discharging}, we use these structural properties of $G$ to carry out a simple discharging argument that proves that $\mad(G) \geq \frac{11}{3}$. This result implies that $G$ in fact is not a counterexample to Theorem \ref{thm:mad113}, completing the proof.

\subsection{Notation}
Let $G$ be a graph.
If $v \in V(G)$ is a vertex of degree $d$, then we say that $v$ is a \emph{$d$-vertex}. If $\deg(v) \geq d$, then we say that $v$ is a \emph{$d^+$-vertex}.
 If $\deg(v) \leq d$, then we say that $v$ is a \emph{$d^-$-vertex}.
If $v$ is a $d$- ($d^+$-, $d^-$-)  vertex which is adjacent to another vertex $u$, then we often say that $v$ is a \emph{$d$- ($d^+$-, $d^-$-) neighbor} of $u$.
If $P$ is a path for which each $v \in V(P)$ is a $d$- ($d^+$-, $d^-$-)  vertex, then we say that $P$ is a \emph{$d$- ($d^+$-, $d^-$-) path}.

Given a function $f:V(G) \rightarrow \mathbb Z$, we say that an \emph{$f$-assignment} on $G$ is a function $L:V(G) \rightarrow 2^{\mathbb N}$ for which $|L(v)| = \max\{0,f(v)\}$ for each $v \in V(G)$. (When $f$ is positive-valued, this definition agrees with the original definition of an $f$-assignment.) Note that if $f(v) \leq 0$ for some $v \in V(G)$, then $G$ is not $f$-choosable.

Given a vertex $v \in V(G)$ and a vertex subset $X \subseteq V(G)$, we write $N_X(v) = N(v) \cap X$, and we write $N_X[v] = \{v\} \cup N_X(v)$. Given vertex subsets $S \subseteq V(G)$ and $X \subseteq V(G)$, we write $N_X[S] = \bigcup_{v \in S} N_X[v]$.
Given a subgraph $H \subseteq G$ and a vertex $v \in V(G)$, we write $N_H(v) = N_{V(H)}(v)$. We define $N_H[v]$ and  $N_H[S]$ similarly.

If $G$ is a graph with a list assignment $L$, $H$ is an induced subgraph of $G$, and $\phi$ is an $L$-coloring of $G\setminus H$, then we say that a color $c \in L(v)$ is \emph{available} at a vertex $v \in V(H)$ if $\phi(w) \neq c$ for each neighbor $w \in N(v) \setminus V(H)$.

Given two vertices $u,v \in V(G)$, the \emph{distance} between $u$ and $v$ is the number of edges in a shortest path joining $u$ and $v$. If $u$ and $v$ belong to distinct components of $G$, then we say that the distance between $u$ and $v$ is $\infty$.

Given a vertex $v \in V(G)$, we identify $v$ with the $K_1$ subgraph of $G$ induced by $v$; that is, $v = G[\{v\}]$. In other words, we consider $v$ both as a vertex and a subgraph of $G$, which allows us to use $v$ in concepts which are defined generally for subgraphs of $G$.

If $S$ is a set with an element $x$, we write $S - x = S \setminus \{x\}$.

\section{A generalized reducible subgraph framework}
\label{sec:framework}
Dvo\v{r}\'ak, Masa\v{r}\'ik, Mus\'ilek, and Pangr\'ac \cite{DMMP6}
developed a reducible subgraph framework
which can prove that certain graph classes are flexibly choosable. We lay out some key concepts of their framework with slightly modified terminology.
Given a graph $G$ with an induced subgraph $H$, for each integer $k \geq 3$, we define the function $\ell_{H,k}:V(H) \rightarrow \mathbb Z$ so that \[\ell_{H,k}(v) = k - \deg_G(v) + \deg_H(v)\]
for each $v \in V(H)$. Note that if $L$ is a $k$-assignment on $G$
and an $L$-coloring of $G \setminus H$ is fixed, then for each vertex $v \in V(H)$, $\ell_{H,k}(v)$ gives a lower bound for the number of available colors in $L(v)$.

\begin{definition}
    Let $H$ be a graph, let $k \geq 3$ an integer, 
and let $f:V(H) \rightarrow \mathbb Z$. 
We say that $H$ is \emph{weakly $(f,k)$-reductive} if 
 for every $f$-assignment $L$ on $H$, the following properties (FIX') and (FORB-$t$) hold, with $t = k-2$:
    \begin{enumerate}
        \item[(FIX')] \label{item:fix} For each vertex $v \in V(H)$ and color $c \in L(v)$, there exists an $L$-coloring $\phi$ of $H$ for which $\phi(v) = c$, and 
        \item[(FORB-$t$)] \label{item:2-forb} For each $t$-tuple $v_1,\dots,v_t \in V(H)$ and color $c \in L(v_1) \cup \dots \cup L(v_t)$, 
        there exists an $L$-coloring $\phi$ of $H$ such that $\phi(v_j) \neq c$ for all $1 \leq j \leq t$.
    \end{enumerate}
    If $H$ is an induced subgraph of another graph $G$, then we say that $H$ is a \emph{weakly $k$-reducible subgraph} of $G$ if $H$ is weakly $(\ell_{H,k},k)$-reductive.
\end{definition}

Note that the vertices $v_1, \dots, v_t$ in (FORB-$t$) are not necessarily distinct, so (FORB-$t$) is nontrivial on graphs with fewer than $t$ vertices, and for $t \geq 1$, (FORB-$t$) implies (FORB-$(t-1)$).
We note that the (FIX') implies (FORB-$1$) whenever $f(v) \geq 2$ for each vertex $v \in V(H)$.

Dvo\v{r}\'ak, Masa\v{r}\'ik, Mus\'ilek, and Pangr\'ac showed that if every induced subgraph of a graph $G$ has a weakly $k$-reducible subgraph on at most $b$ vertices, then $G$ is $\epsilon$-flexibly $k$-choosable for some constant $\epsilon > 0$ depending on $k$ and $b$.
They used this fact to prove the existence of a constant $\epsilon > 0$ such that every planar graph of girth $6$ is $\epsilon$-flexibly $3$-choosable \cite{DMMP6}.
In order to prove Theorem \ref{thm:mad113}, we use a generalized version of their framework which allows reducible subgraphs to be arbitrarily large. The framework that we use here is a special case of a more general framework introduced by the authors in \cite{RBPB}.

\begin{definition}
\label{def:fixforb}
Let $H$ be a graph, let $k \geq 3$ be an integer, 
and let $f:V(H) \rightarrow \mathbb Z$. Given $\alpha > 0$, we say that $H$ is \emph{$(f,k,\alpha)$-reductive} if 
 for every $f$-assignment $L$ on $H$, there exists a probability distribution on $L$-colorings $\phi$ of $H$ such that the following hold:
\begin{enumerate}
    \item[(FIX)] for each $v \in V(H)$ and each color $c \in L(v)$, $\Pr(\phi(v) = c) \geq \alpha$;
    \item[(FORB)] for each subset $U \subseteq V(H)$ of size at most $k - 2$ and each color $c \in \bigcup_{u \in U} L(u)$, $\Pr(\phi(u) \neq c \ \forall u \in U) \geq \alpha$.
\end{enumerate}
If $H$ is an induced subgraph of another graph $G$, then we say that $H$ is a \emph{$(k,\alpha)$-reducible subgraph} of $G$ if $H$ is $(\ell_{H,k},k,\alpha)$-reductive.
\end{definition}

Observe that the $(k,\alpha)$-reducibility of an induced subgraph $H$ of $G$ is uniquely determined by the graph $H$ and the function $\deg_G$ restricted to $V(H)$.
Note also that the existence of a probability distribution on proper $L$-colorings $\phi$ of $H$ implies that there exists a set $\Phi$ of proper $L$-colorings $\phi$ of $H$ with probability measure $1$. In particular, $\Phi$ is nonempty, so $H$ is $L$-colorable. Therefore, if $H$ is $(f,k,\alpha)$-reductive, then
$H$ is $f$-choosable.
Furthermore, if a triple $(H,k,f)$ satisfies (FORB), then $f(v) \geq 2$ for each vertex $v \in V(H)$.

The following lemma shows that given a graph $G$ and a function $f:V(G) \rightarrow \mathbb Z$ bounded above by $k$, an induced subgraph of $G$ on at most $b$ vertices which is weakly $k$-reducible is
also $(k,\alpha)$-reducible for some constant $\alpha > 0$ depending on $k$ and $b$.

\begin{lemma}
\label{lem:weak-to-strong}
    Let $H$ be a graph on at most $b$ vertices. Let $k \geq 3$ be an integer, and let $f:V(H) \rightarrow \mathbb Z$ satisfy $f(v) \leq k$ for each $v \in V(H)$. If $H$ is weakly $(f,k)$-reductive, then $H$ is $(f,k,\alpha)$-reductive for the constant $\alpha = k^{-b}$.
\end{lemma}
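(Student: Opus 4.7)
My plan is to take the uniform distribution over the set $\Phi$ of all proper $L$-colorings of $H$ and show that this single distribution simultaneously witnesses (FIX) and (FORB) with constant $\alpha = k^{-b}$. The key numerical observation driving the bound is that $|L(v)| = \max\{0, f(v)\} \leq k$ for each $v \in V(H)$ and $|V(H)| \leq b$, so $|\Phi| \leq \prod_{v \in V(H)} |L(v)| \leq k^b$. Thus, under the uniform distribution on $\Phi$, any event realized by at least one element of $\Phi$ has probability at least $1/|\Phi| \geq k^{-b} = \alpha$, and the task reduces to showing that each event appearing in (FIX) and (FORB) is nonempty.

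I would verify the two families of events in turn. For (FIX), given $v \in V(H)$ and $c \in L(v)$, condition (FIX') of weak $(f,k)$-reductivity directly produces at least one $L$-coloring $\phi \in \Phi$ with $\phi(v) = c$, so $\Pr(\phi(v) = c) \geq 1/|\Phi| \geq \alpha$. For (FORB), I would fix a set $U \subseteq V(H)$ with $|U| \leq k-2 = t$ and a color $c \in \bigcup_{u \in U} L(u)$; the case $U = \emptyset$ is vacuous, so I assume $U \neq \emptyset$ and pad $U$ to a $t$-tuple $(v_1, \ldots, v_t)$ by repeating some fixed element of $U$. Since $c$ still lies in $L(v_1) \cup \cdots \cup L(v_t)$, condition (FORB-$t$) supplies an $L$-coloring $\phi$ with $\phi(v_j) \neq c$ for every $j$, and because $\{v_1, \ldots, v_t\} = U$ this $\phi$ realizes the event in (FORB), yielding probability at least $\alpha$.

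There is essentially no genuine obstacle here: the argument is just the remark that a deterministic existence statement becomes a probability lower bound of $1/|\Phi|$ under the uniform distribution. The only subtle point worth flagging is that (FORB-$t$) allows repetition among $v_1, \ldots, v_t$, and this is precisely what makes it strong enough to translate into the subset formulation of (FORB) for $U$ of size strictly less than $t$; without this, one would need a separate argument to handle smaller subsets.
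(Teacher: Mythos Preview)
Your proposal is correct and follows essentially the same approach as the paper: take the uniform distribution on the set of $L$-colorings, bound its size by $k^b$, and use (FIX') and (FORB-$t$) to guarantee each relevant event is nonempty. The only cosmetic difference is that the paper invokes the previously noted implication (FORB-$(k-2)$) $\Rightarrow$ (FORB-$|U|$) for $|U|\le k-2$, whereas you spell out the padding-by-repetition argument explicitly; these are the same observation.
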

\begin{proof}
    Let $L$ be an $f$-assignment on $H$, and let $\phi$ be an $L$-coloring of $H$ chosen uniformly at random from all $L$-colorings of $H$. As $|L(v)| \leq k$ for each vertex $v \in V(H)$, and as $|V(H)| \leq b$, each $L$-coloring $\phi$ of $H$ is chosen with probability at least $\alpha = k^{-b}$. Now, consider a vertex $v \in V(H)$ and a color $c \in  L(v)$. By (FIX'), some $L$-coloring $\phi$ of $H$ assigns $c$ to $v$. Therefore, $c$ is assigned to $v$ with probability at least $\alpha$. Similarly, let $U \subseteq V(H)$ be a vertex subset of size at most $k-2$, and let $c \in \bigcup_{u \in U} L(u)$. Letting $t = |U|$, the (FORB-$t$) condition implies that some $L$-coloring $\phi$ of $H$ avoids the color $c$ at all vertices of $U$. Therefore, $c$ is avoided at all vertices of $U$ with probability at least $\alpha$. Hence, $H$ is $(f,k,\alpha)$-reductive.
\end{proof}

In \cite{DMMP6}, Masa\v{r}\'ik, Mus\'ilek, and Pangr\'ac say that an induced subgraph $H$ of $G$ is $k$-reducible 
if
the properties in (FIX') and (FORB-$(k-2)$) hold for the function $\ell_{H,k}$.
However, in practice, they only consider reducible subgraphs with at most $b$ vertices, for some constant $b$.
Therefore, if a graph $H$ with at most $b$ vertices is $k$-reducible by their definition, then
given an $\ell_{H,k}$-assignment $L$ on $H$,
a uniform distribution on all $L$-colorings of $H$ guarantees that (FIX) and (FORB) both hold with the value $\alpha = k^{-b}$. Thus, $H$ is also $(\ell_{H,k},k,k^{-b})$-reductive and hence $(k,k^{-b})$-reducible in our 
framework.

Before showing the main application of our definitions, we need the following probabilistic lemma.

\begin{lemma}
\label{lem:conditional}
    Let $A_1, \dots, A_t$ be disjoint events in a probability space with nonzero probability. Then, for each event $X$,
    \[\Pr(X|A_1 \cup \dots \cup A_t) = \sum_{i=1}^t \Pr(X|A_i) P(A_i | A_1 \cup \dots \cup A_t).\]
\end{lemma}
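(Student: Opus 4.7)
The plan is to unwind both sides using the definition of conditional probability and then apply disjointness. First I would write the left-hand side as $\Pr(X \cap (A_1 \cup \cdots \cup A_t))/\Pr(A_1 \cup \cdots \cup A_t)$. Since the events $A_i$ are pairwise disjoint, distributing the intersection over the union gives $X \cap (A_1 \cup \cdots \cup A_t) = \bigcup_{i=1}^t (X \cap A_i)$, and the sets $X \cap A_i$ are themselves pairwise disjoint, so by finite additivity the numerator equals $\sum_{i=1}^t \Pr(X \cap A_i)$.

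Next I would rewrite each term $\Pr(X \cap A_i)$ as $\Pr(X \mid A_i)\,\Pr(A_i)$; this is legal because each $A_i$ has strictly positive probability by hypothesis, so the conditioning is well defined. Plugging in and pulling the common denominator $\Pr(A_1 \cup \cdots \cup A_t)$ inside the sum, the left-hand side becomes $\sum_{i=1}^t \Pr(X \mid A_i) \cdot \frac{\Pr(A_i)}{\Pr(A_1 \cup \cdots \cup A_t)}$. Finally, because $A_i \subseteq A_1 \cup \cdots \cup A_t$, we have $A_i \cap (A_1 \cup \cdots \cup A_t) = A_i$, so the ratio $\frac{\Pr(A_i)}{\Pr(A_1 \cup \cdots \cup A_t)}$ is exactly $\Pr(A_i \mid A_1 \cup \cdots \cup A_t)$. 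Substituting yields the stated identity.

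The only point worth verifying is that $\Pr(A_1 \cup \cdots \cup A_t) > 0$ so that all the conditional probabilities appearing on both sides are defined; this follows immediately from disjointness, which gives $\Pr(A_1 \cup \cdots \cup A_t) = \sum_i \Pr(A_i) > 0$ by the assumption that each $\Pr(A_i)$ is nonzero. There is no real obstacle here: the lemma is essentially the law of total probability written in conditional form, and the proof is pure bookkeeping from the definition $\Pr(B \mid C) = \Pr(B \cap C)/\Pr(C)$ combined with finite additivity.
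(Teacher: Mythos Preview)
Your proof is correct and essentially the same as the paper's: both arguments unwind everything via the definition of conditional probability and use disjointness to split $\Pr(X \cap (A_1 \cup \cdots \cup A_t))$ as $\sum_i \Pr(X \cap A_i)$. The only cosmetic difference is direction---the paper starts from the right-hand side and simplifies to the left, whereas you go left to right---and you add the (harmless) explicit check that $\Pr(A_1 \cup \cdots \cup A_t) > 0$.
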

\begin{proof}
    By the definition of conditional probability,
    \begin{eqnarray*}
    \sum_{i=1}^t \Pr(X|A_i) P(A_i | A_1 \cup \dots \cup A_t)& =& \sum_{i=1}^t \frac{\Pr (X \cap A_i)}{\Pr(A_i)} \cdot \frac{\Pr(A_i)}{\Pr(A_1 \cup \dots \cup A_t)}  \\
    &=& \frac{\Pr(X \cap (A_1 \cup \dots \cup A_t) )}{\Pr(A_1 \cup \dots \cup A_t)} \\
    &=& \Pr(X | A_1 \cup \dots \cup A_t).
    \end{eqnarray*}
\end{proof}

The following lemma is our main tool for proving Theorem
\ref{thm:mad113}. 
This lemma is an analogue of \cite[Lemma 4]{DMMP-triangle-free}, and these two lemmas have nearly identical proofs.

\begin{lemma}
\label{lem:main-k-red}
    For each integer $k \geq 3$ and real number $\alpha > 0$, there exists a value $\epsilon = \epsilon(k,\alpha) > 0$ as follows. Let $G$ be a graph. If for every $Z \subseteq V(G)$, the graph $G[Z]$ contains an induced $(k,\alpha)$-reducible subgraph, then $G$ is
    weighted $\epsilon$-flexibly $k$-choosable.
\end{lemma}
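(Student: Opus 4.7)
The plan is to prove this by strong induction on $|V(G)|$, strengthening the conclusion so that under the hypothesis, $G$ admits a probability distribution $D$ on its $L$-colorings that satisfies both the (FIX) and (FORB) conditions of Definition \ref{def:fixforb}, for constants $\epsilon = \epsilon(k, \alpha) > 0$ and $\delta = \delta(k, \alpha) > 0$ depending only on $k$ and $\alpha$. Lemma \ref{lem:prob} then converts (FIX) into the desired weighted $\epsilon$-flexibility of $G$.

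For the inductive step, I will apply the hypothesis to $Z = V(G)$ to obtain an induced $(k, \alpha)$-reducible subgraph $H$ of $G$. Since every induced subgraph of $G' := G \setminus H$ is also an induced subgraph of $G$, $G'$ inherits the hypothesis and, by induction, admits a distribution $D'$ on its $L$-colorings satisfying (FIX) and (FORB). I will then build $D$ in three stages: sample $\phi' \sim D'$; for each $v \in V(H)$, form the set of available colors $A(v) := L(v) \setminus \phi'(N_G(v) \setminus V(H))$ (which has size at least $\ell_{H,k}(v) \geq 2$) and pick $L'(v) \subseteq A(v)$ of size $\ell_{H,k}(v)$ uniformly at random; then apply the $(k, \alpha)$-reducibility of $H$ to the $\ell_{H,k}$-assignment $L'$ to sample $\phi''$, and set $\phi := \phi' \cup \phi''$. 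The inclusion $L'(v) \subseteq A(v)$ guarantees that $\phi$ is a proper $L$-coloring.

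To verify (FIX) and (FORB) for $D$, I will case-split based on whether the relevant vertex or subset lies in $V(G \setminus H)$, in $V(H)$, or straddles both. Properties on $V(G \setminus H)$ transfer directly from $D'$. For $v \in V(H)$ and $c \in L(v)$, I will telescope
\[
\Pr(\phi(v) = c) = \Pr(c \in A(v)) \cdot \Pr(c \in L'(v) \mid c \in A(v)) \cdot \Pr(\phi''(v) = c \mid c \in L'(v)),
\]
bounding the first factor by (FORB) of $D'$ applied to $U := N_G(v) \setminus V(H)$ (which has size at most $k - 2$), the second by $\ell_{H,k}(v) / |A(v)| \geq 2/k$, and the third by (FIX) of the reducibility distribution for $H$. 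For (FORB) at subsets straddling both parts, I will condition on $\phi'$ and combine the $H$-side and $D'$-side (FORB) bounds via Lemma \ref{lem:conditional}.

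The main obstacle will be selecting the inductive constants $\epsilon(k, \alpha)$ and $\delta(k, \alpha)$ carefully enough that the inductive step preserves them: the extension step contributes a factor of roughly $2\alpha/k$ to the (FIX) bound on $V(H)$ and a factor of roughly $\alpha$ to the (FORB) bound on subsets straddling $V(H)$ and $V(G \setminus H)$, so a delicate balance between $\epsilon$ and $\delta$ must be maintained to prevent these contributions from forcing progressive shrinkage of the constants across successive inductive steps.
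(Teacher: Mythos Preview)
Your approach is essentially identical to the paper's: strengthen the conclusion to include both (FIX) and (FORB), induct on $|V(G)|$, peel off a reducible $H$, and extend the distribution on $G \setminus H$ by randomly trimming the available lists down to size exactly $\ell_{H,k}(v)$ before invoking reducibility. The (FIX) telescoping and the use of (FORB) on $N_G(v) \setminus V(H)$ are exactly what the paper does.

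The obstacle you flag at the end is real, and a single constant $\delta$ for (FORB) will not close the induction: when $U$ straddles $V(H)$ and $V(G \setminus H)$, you get $\delta$ from the inductive (FORB) on $U_1 = U \setminus V(H)$ times $\alpha$ from the reducibility (FORB) on $U_2 = U \cap V(H)$, yielding $\delta\alpha < \delta$. The paper's resolution is to make the (FORB) constant depend on $|U|$: one proves $\Pr(\phi(u) \neq c \ \forall u \in U) \geq \alpha^{|U|}$. Then the straddling case gives $\alpha^{|U_1|} \cdot \alpha \geq \alpha^{|U_1| + |U_2|} = \alpha^{|U|}$ (using $|U_2| \geq 1$ and $\alpha \leq 1$), and the induction closes with no shrinkage. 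With this choice the (FIX) bound on $V(H)$ becomes $\alpha^{k-2} \cdot \tfrac{2}{k} \cdot \alpha = \tfrac{2\alpha^{k-1}}{k}$, so one may take $\epsilon = \tfrac{2\alpha^{k-1}}{k}$; no ``delicate balance'' between $\epsilon$ and $\delta$ is needed, since the (FORB) bound is self-sustaining and $\epsilon$ is simply read off once (FORB) is fixed.
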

\begin{proof}
Let $G$ be a graph, and let $L$ be a $k$-assignment on $G$.
We define $\epsilon = \frac{2\alpha^{k-1}}{k}$.
We aim to prove the stronger statement that there exists a distribution on $L$-colorings $\phi:V(G) \rightarrow \mathbb N$ such that the following two properties hold:
\begin{itemize}
    \item For each $v \in V(G)$ and color $c \in L(v)$, $\Pr(\phi(v) = c) \geq \epsilon$;
    \item For each subset $U \subseteq V(G)$  of size at most $k-2$ and each color $c \in \bigcup_{u \in U}L(u)$, $\Pr(\phi(u) \neq c \ \forall u \in U) \geq \alpha^{|U|}$.
\end{itemize}
If a distribution satisfying these two properties exists, Lemma \ref{lem:prob} implies that $G$ is weighted $\epsilon$-flexibly $k$-choosable.

We prove the existence of our distribution on
$L$-colorings of $G$ by 
induction on $|V(G)|$. 
If $|V(G)|  = 0$, then the distribution that assigns the empty coloring with probability $1$ satisfies both properties. Hence, we assume that $|V(G)| \geq 1$. By the lemma's assumption, $G$ contains an induced $(k,\alpha)$-reducible subgraph $H$. By the induction hypothesis, there exists a distribution on $L$-colorings $\phi:V(G \setminus H) \rightarrow \mathbb N$ that satisfies our two properties.

Now, we describe a random procedure for obtaining an $L$-coloring of $G$.
First, we choose an $L$-coloring $\phi$ of $G \setminus H$ according to the distribution above. Then, we extend $\phi$ to an $L$-coloring of $G$ as follows.
For each $v \in V(H)$, let $L'(v)$ be the set of colors $c \in L(v)$ such that no vertex $w \in N_{G \setminus H}(v)$ satisfies $\phi(w) = c$. We observe that 
\[|L'(v)| \geq k - |N_{G \setminus H}(v)| = k - \deg_G(v) + \deg_H(v) = \ell_{H,k}(v).\]
We delete a set of $|L'(v)| - \ell_{H,k}(v)$ colors from $L'(v)$ chosen uniformly at random, so that $|L'(v)| = \ell_{H,k}(v)$. Then, $L'$ is an $\ell_{H,k}$-assignment on $H$.
As $H$ is $(k,\alpha)$-reducible, there exists a distribution on $L'$-colorings $\psi$ of $H$ such that the properties (FIX) and (FORB) hold with the constant $\alpha$.
We choose an $L'$-coloring $\psi$ of $H$ from this distribution, and we 
take the union $\phi \cup \psi$.
By the definition of the list assignment $L'$, the coloring $\phi \cup \psi$  is a proper $L$-coloring of $G$.

Now, we show that the distribution on $L$-colorings $\phi \cup \psi$ of $G$ that we obtain  from our random procedure satisfies our two properties.
\begin{enumerate}
    \item 
For the first property, let $v \in V(G)$ and $c \in L(v)$. If $v \in V(G \setminus H)$, then by the induction hypothesis, $\Pr(\phi(v) = c) \geq \alpha > \epsilon$. If $v \in V(H)$, then let $U = N_{G \setminus H}(v)$. As $H$ is $(k,\alpha)$-reducible, it follows that $\ell_{H,k}(v) = k - |N_{G\setminus H}(v)| \geq 2$, and hence $|U| \leq k-2$.
Thus, by the induction hypothesis, with probability at least $\alpha^{k-2}$, $\phi(u) \neq c$ for each $u \in U$.

Next, suppose it is given that $\phi = \phi_0$, where $\phi_0$ is a fixed $L$-coloring of $G \setminus H$ such that $\phi_0(u) \neq c$ for all $u \in U$. With $\phi = \phi_0$ given, the conditional probability that $c \in L'(v)$ is at least $\frac{\ell_{H,k}(v)}{k} \geq \frac{2}{k}$. Then, as $H$ is $(k,\alpha)$-reducible, 
the subsequent conditional probability that $\psi(v) = c$ is at least $\alpha$. Therefore, with $\phi = \phi_0$ given, the conditional probability that $\psi(v) = c$ is at least $\frac{2\alpha}{k}$.

Now, let $\Phi$ be the set of all fixed $L$-colorings $\phi_0$ of $G \setminus H$ for which $\phi_0(u) \neq c$ for all $u \in U$. By Lemma \ref{lem:conditional}, 
\begin{eqnarray*}
\Pr (\psi(v) = c | \phi(u) \neq c \  \forall u \in U)  &=&
\Pr \left (\psi(v) = c \biggr \rvert \bigcup_{\phi_0 \in \Phi} (\phi = \phi_0) \right ) \\
&=& \sum_{\phi_0 \in \Phi} \Pr(\psi(v) = c| \phi = \phi_0) \Pr(\phi = \phi_0 | \phi \in \Phi) \\
&\geq & \frac{2\alpha}{k} \sum_{\phi_0 \in \Phi} \Pr(\phi = \phi_0 | \phi \in \Phi) \\
&=& \frac{2\alpha}{k}.
\end{eqnarray*}
As $\phi(u) \neq c$ for all $u \in U$ with probability at least $\alpha^{k-2}$,
our random coloring $\phi \cup \psi$
assigns
$c$ to $v$ with probability at least 
$\alpha^{k-2} \left ( \frac{2 \alpha}{k} \right ) =\epsilon$.
\item For the second property, let $U \subseteq V(G)$ be a subset of size at most $k-2$, and let $c \in \bigcup_{u \in U} L(u)$.
Write $U_1 = U \cap V(G \setminus H)$ and $U_2 = U \cap V(H)$. By the induction hypothesis, our random coloring $\phi$ of $G \setminus H$ avoids $c$ at each $u \in U_1$ with probability at least $\alpha^{|U_1|}$. Hence, if $U = U_1$, then we are done. Otherwise, $|U_2| \geq 1$, and the argument proceeds as follows.

Suppose that it is given that $\phi = \phi_0$, where $\phi_0$ is a fixed $L$-coloring of $G \setminus H$
such that $\phi_0(u) \neq c$ for all $u \in U_1$. Then, as $H$ is $(k,\alpha)$-reducible, the conditional probability that $\psi(u) \neq c$ for all $u \in U_2$ is at least $\alpha$. Therefore, with $\phi=\phi_0$ given, the conditional probability that $\psi(u) \neq c$ for all $u \in U_2$ is at least $\alpha$.

Now, let $\Phi$ 
be the set of all fixed $L$-colorings $\phi_0$ of $G \setminus H$ for which $\phi_0(u) \neq c$ for all $u \in U_1$. By Lemma \ref{lem:conditional},
\begin{eqnarray*}
   & &  \Pr\left (\psi(u) \neq c \ \forall u \in U_2 \biggr |  \phi(u) \neq c \ \forall u \in U_1 \right ) \\
    &=& \Pr \left (\psi(u) \neq c \ \forall u \in U_2 | \bigcup_{\phi_0 \in \Phi} (\phi = \phi_0) \right ) \\
    &=& \sum_{\phi_0 \in \Phi}  \Pr \left (\psi(u) \neq c \ \forall u \in U_2 | (\phi = \phi_0) \right )  \Pr(\phi = \phi_0 | \phi \in \Phi) \\
    &\geq & \alpha \sum_{\phi_0 \in \Phi}    \Pr(\phi = \phi_0 | \phi \in \Phi) \\
    &=& \alpha.
\end{eqnarray*}
As $\phi(u) \neq c$ for all $u \in U_1$ with probability at least $\alpha^{|U_1|}$, our final coloring $\phi \cup \psi$ avoids $c$ at all $u \in U$ with probability at least $\alpha \cdot \alpha^{|U_1|} \geq \alpha^{|U_2| + |U_1|} = \alpha^{|U|}$.
\end{enumerate}
Hence, we have a distribution on $L$-colorings
that satisfies both properties, and the proof is complete.
\end{proof}

\section{Tools for identifying reductive subgraphs}
\label{sec:tools}

\subsection{Some basic lemmas}
In this subsection, we introduce some definitions and lemmas that will
help us identify reductive graphs.
If $G$ is a connected graph, a vertex $v \in V(G)$ is called a \emph{cut vertex} if $G -v$ has at least two components.
A \emph{block} of $G$ is a
maximal connected subgraph $B$ for which the graph $G[B]$ has no cut vertex. In other words,
an
induced subgraph $B \subseteq G$ is a block if $B$ is maximal with respect to the property of being $2$-connected or being isomorphic to $K_1$ or $K_2$. 
A \emph{terminal block} of $G$ is a block containing at most one cut vertex of $G$.

A \emph{Gallai tree} (introduced by Gallai \cite{Gallai}) is a connected graph for which every block is a clique or odd cycle. 
Erd\H{o}s, Rubin, and Taylor \cite[Theorem R]{ERT} 
proved that a graph $T$ is not a Gallai tree if and only if $T$ contains either an even cycle or a theta graph (i.e.~a cycle with a single chord) as an induced subgraph.
The first lemma that we present
was also proven
by Erd\H{o}s, Rubin, and Taylor \cite{ERT}.
\begin{lemma}
\label{lem:ERT}
If $H$ is a connected graph and
$f:V(H) \rightarrow \mathbb Z$ satisfies $f(v) \geq \deg(v)$ for each $v \in V(H)$, then $H$ is $f$-choosable if and only if one of the following conditions holds:
\begin{itemize}
    \item $f(u) > \deg(u)$ for at least one $u \in V(H)$;
    \item $H$ is not a Gallai tree.
\end{itemize}
\end{lemma}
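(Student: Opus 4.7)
The plan is to establish the two directions of the biconditional separately.

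\textbf{Sufficiency.} Suppose first that some $u \in V(H)$ satisfies $f(u) > \deg(u)$. Given any $f$-assignment $L$, I order $V(H) = v_1, \ldots, v_n$ in reverse BFS from $u$, so that $v_n = u$ and every other vertex has a neighbor later in the ordering (its BFS parent). Greedy coloring in this order succeeds: each $v_i$ with $i < n$ has at most $\deg(v_i) - 1$ previously colored neighbors, leaving at least $f(v_i) - (\deg(v_i)-1) \geq 1$ admissible colors, and $u$ has at most $\deg(u) < f(u)$ colored neighbors. Suppose instead $H$ is not a Gallai tree. Then by the Erd\H{o}s--Rubin--Taylor characterization quoted just above, $H$ contains an induced subgraph $H_0$ which is either an even cycle or a theta graph. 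I color $H_0$ first: if $V(H_0) = V(H)$ then $H$ itself is an even cycle or a theta graph, both of which are degree-choosable (even cycles are $2$-choosable; theta graphs by a short direct case analysis); otherwise some $v \in V(H_0)$ has $\deg_H(v) > \deg_{H_0}(v)$, giving slack on $H_0$ and reducing to the previous case inside $H_0$. Then I extend the coloring to $V(H) \setminus V(H_0)$ in decreasing order of distance from $V(H_0)$: when each $v$ is colored, its neighbor one step closer to $H_0$ remains uncolored, so $f(v) \geq \deg(v)$ leaves at least one available color.

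\textbf{Necessity.} Suppose $H$ is a Gallai tree with $f(v) = \deg(v)$ everywhere; I construct an $f$-assignment $L$ admitting no proper $L$-coloring, by induction on the number of blocks. For a single-block Gallai tree there are explicit bad lists: if $H = K_n$, assign every vertex the list $\{1, \ldots, n-1\}$ (any proper coloring would require $n$ distinct colors from $n-1$ available); if $H = C_{2k+1}$, assign every vertex the list $\{1, 2\}$ (a proper coloring would have to alternate, which odd length forbids). For the inductive step, select a terminal block $B$ with cut vertex $w$, and apply the inductive hypothesis to the smaller Gallai tree $H' = H - (V(B) \setminus \{w\})$ in a strengthened form that lets one prescribe which color is forced at $w$ by the bad lists on $H'$; then attach to $V(B) \setminus \{w\}$ a tailored list (via pigeonhole for clique blocks, and via the rigid alternation of proper $2$-list-colorings along an odd-cycle path for odd-cycle blocks) designed to create an unavoidable conflict at $w$ no matter which color $w$ receives.

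\textbf{Main obstacle.} The essential difficulty lies in the necessity direction, specifically in the inductive step: the induction hypothesis must be strengthened so that coloring restrictions can be threaded coherently across each cut vertex. One must formulate a careful statement — for instance, that the bad lists can be built so as to force (or forbid) any prescribed color at the cut vertex — and verify this strengthening in both base cases. The sufficiency direction, by contrast, is a greedy-BFS argument once the quoted ERT characterization of non-Gallai trees as those containing an induced even cycle or theta subgraph is in hand, so the real technical work is concentrated in the block-by-block construction of the bad list assignment.
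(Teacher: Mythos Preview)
The paper does not prove this lemma; it is cited as a classical result of Erd\H{o}s, Rubin, and Taylor and stated without proof, so there is no argument in the paper to compare against.

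On its own merits, your sufficiency argument has a genuine error in the extension step of sub-case~2b. You color $H_0$ first and then sweep over $V(H)\setminus V(H_0)$ in decreasing order of distance, asserting that each such $v$ has an uncolored neighbor ``one step closer to $H_0$.'' But when $\dist(v,H_0)=1$, that closer neighbor lies in $H_0$ and is \emph{already colored}, so $v$ may face $\deg(v)$ forbidden colors against a list of size $f(v)=\deg(v)$, and the greedy step can fail. (Concretely: attach a pendant vertex $p$ to a degree-$2$ vertex $w$ of a theta graph $H_0$; your procedure colors $H_0$ first, and then $p$ has a list of size $1$ which may equal $\{\phi(w)\}$.) The standard fix reverses the order: color $V(H)\setminus V(H_0)$ first, sweeping \emph{toward} $H_0$, and color $H_0$ last. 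After this, each $v\in V(H_0)$ retains at least $\deg_{H_0}(v)$ available colors, so one must show directly that even cycles and theta graphs are degree-choosable. In other words, your sub-case~2a is the genuine base case for both sub-cases, and the ``short direct case analysis'' for theta graphs cannot be skipped (it is not hard, but it does require an argument---e.g., reduce the theta graph to its induced even cycle by the same trick).

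Your necessity direction is, as you yourself flag, only a plan: the induction on the block tree requires strengthening the hypothesis so that one can prescribe which color is forced at the cut vertex, and you have not carried this out.
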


Given a graph $H$, a vertex subset $U \subseteq V(H)$, and a function $f:V(H) \rightarrow \mathbb \mathbb Z$, we define the function $(f - 1_U): V(H) \rightarrow \mathbb Z$ so that 
\[
(f - 1_U)(v) = 
\begin{cases} v -1 &  \textrm { if $v \in U$} \\
v & \textrm{ if $v \not \in U$} 
\end{cases}
.\]
Equivalently, $1_U(v)$ is the function that equals $1$ when $v \in U$ and equals $0$ elsewhere.
(This notation extends notation of Dvo\v{r}\'ak, Masa\v{r}\'ik, Mus\'ilek, and Pangr\'ac \cite{DMMP-triangle-free}.)
If $T$ is a Gallai tree, then we say that a function $f:V(T) \rightarrow \mathbb N$ is \emph{bad} on $T$ if there exists a set $U \subseteq V(T)$ of size at most $2$ such that for each $w \in V(T)$, $(f -1_U)(w) = \deg_T(w)$.
We note that if $f$ is bad on $T$, then the property (FORB-$2$) does not hold for $T$ and $f$.

\begin{lemma}
\label{lem:Gallai}
Let $H$ be a graph, and let $R \subseteq V(H)$ be a dominating set of $H$. Let $f:V(H) \rightarrow \mathbb N$ satisfy $f(v) \geq \deg_H(v)$ for $v \in V(H) \setminus R$ and $f(r) \geq \deg_H(r) + 1$ for $r \in R$. Suppose that $H$ has no induced Gallai tree subgraph $T$ for which $f$ is bad on $T$.
Then, for each pair $u,v \in V(H)$, either
$H$ is $f -1_{\{u,v\}}$-choosable, or 
$uv$ is an edge of $H$ with exactly one endpoint in $R$.\end{lemma}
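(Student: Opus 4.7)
The plan is to prove the lemma by reducing to connected components of $H$ and applying Lemma~\ref{lem:ERT} case-by-case, sometimes after deleting one or both of $u, v$ with carefully chosen colors. Since $R \cap V(C)$ dominates each component $C$ of $H$ and $f|_C$ inherits the hypothesis, it suffices to show each component $C$ is $(f|_C - 1_{\{u,v\} \cap V(C)})$-choosable. When $\{u, v\} \cap V(C) = \emptyset$, Lemma~\ref{lem:ERT} applies directly via strict inequality at any vertex of $R \cap V(C)$. When exactly one of $u, v$ lies in $C$, say $w$: if $w \in R$ then $(f - 1_{\{w\}})|_C \geq \deg_C$ and Lemma~\ref{lem:ERT} applies unless $C$ is a Gallai tree with $f|_C = \deg_C + 1_{\{w\}}$, making $f$ bad on $C$ (with $U = \{w\}$) and contradicting the hypothesis. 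If $w \notin R$, I would pick any $c \in L(w)$, set $\phi(w) = c$, and remove $c$ from the lists at $N_C(w)$; each component of $C - w$ must then intersect $R$, since otherwise a vertex in that component would violate domination (its only possible $R$-neighbor being $w \notin R$), so Lemma~\ref{lem:ERT} colors each.

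When both $u, v \in V(C)$, I would handle the cases based on $R$-status and edge-status. If both $u, v \in R$, the analysis parallels the $w \in R$ case above with $|U| = 2$, so either Lemma~\ref{lem:ERT} applies or $f$ is bad on $C$. If both $u, v \notin R$ and $uv$ is not an edge, I would delete both with arbitrary chosen colors and apply the same domination argument to components of $C - \{u, v\}$. If both $u, v \notin R$ and $uv$ is an edge, I would choose distinct $c_u \neq c_v$, which is possible because the hypothesis (no bad Gallai tree on the edge $\{u, v\}$) forbids $f(u), f(v) \in \{1, 2\}$ jointly, ensuring $|L(u)| \geq 2$ or $|L(v)| \geq 2$.

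The main obstacle is the subcase where exactly one of $u, v$ is in $R$ (say $u \in R, v \notin R$) and $uv$ is not an edge, since the exception does not apply here. I would delete $v$ with a chosen color $c$ and analyze components $C''$ of $C - v$. Domination forces $R \cap V(C'') \neq \emptyset$ (since $v \notin R$), and Lemma~\ref{lem:ERT} handles components containing an $R$-vertex different from $u$ via strict inequality. The delicate case is a component $C''$ with $R \cap V(C'') = \{u\}$, $f(u) = \deg_H(u) + 1$, and $C''$ a Gallai tree with tight residual lists; structural analysis then shows $u$ is adjacent to all of $V(C'') \setminus \{u\}$ and $f = \deg_H$ on $V(C'') \setminus \{u\}$. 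The crucial step is to exhibit a bad Gallai tree subgraph of $H$: when $|N_C(v) \cap V(C'')| \leq 1$, the Gallai tree $C''$ itself is bad with $U = \{u\} \cup (N_C(v) \cap V(C''))$ of size at most $2$; when $|N_C(v) \cap V(C'')| \geq 2$, a smaller Gallai subgraph (such as the triangle on $v$ and two of its neighbors in $V(C'')$) becomes bad under the tight $f$-values, or alternatively the color $c$ can be chosen to lie outside $L(w)$ for some $w \in N_C(v) \cap V(C'')$, restoring strict inequality and letting Lemma~\ref{lem:ERT} apply directly.
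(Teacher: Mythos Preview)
Your case analysis is correct up to the final subcase, but the last paragraph contains a genuine gap. In the situation $u\in R$, $v\notin R$, $uv\notin E(H)$, $|N_C(v)\cap V(C'')|\ge 2$, neither of your two proposed fixes works in general. For the first, take $w_1,w_2\in N(v)\cap V(C'')$; the induced subgraph on $\{v,w_1,w_2\}$ is a path or a triangle, and for $f$ to be bad on it you would need both $w_1,w_2\in U$ together with very specific values of $f$ (for a triangle, $f(v)=f(w_1)=f(w_2)=3$, forcing $\deg_H(v)\le 3$; for a path, even more restrictive). Nothing in your tightness analysis forces these equalities. For the second fix, the lists could satisfy $L(v)\subseteq L(w)$ for every $w\in N(v)\cap V(C'')$, in which case no choice of $c\in L(v)$ restores a strict inequality at any such $w$.

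The underlying reason your approach stalls here is that by deleting $v$ first, you confine the candidate bad Gallai tree to $C-v$; but the Gallai tree that witnesses badness may need to contain $v$. The paper sidesteps the entire case analysis with a single device: set $g=f-1_{\{u,v\}}$ and repeatedly delete any vertex with $g>\text{current degree}$, obtaining $H'$. One checks (this is where the ``edge with one endpoint in $R$'' exception arises) that $g(w)\ge\deg_{H'}(w)$ for all surviving $w$. If $H$ is not $g$-choosable, then some component $T$ of $H'$ is a Gallai tree with $g=\deg_{H'}=\deg_T$ on $T$, and taking $U=\{u,v\}\cap V(T)$ gives $(f-1_U)\equiv\deg_T$ on $T$, i.e.\ $f$ is bad on $T$. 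This identifies the correct $T$ automatically, possibly including $v$, and avoids the combinatorial branching entirely.
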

\begin{proof}
    We write $g = f -1_{\{u,v\}}$. We execute the following procedure on $H$:
    \begin{quote}
        While $H$ contains a vertex $v$ for which $g(v) > \deg_H(v)$, delete $v$ from $H$.
    \end{quote}
    We call the output of this procedure $H'$. A greedy inductive argument shows that $H$ is $g$-choosable if and only if $H'$ is $g$-choosable.
    We observe the following:
    \begin{itemize}
        \item No vertex $r \in R \setminus \{u,v\}$ belongs to $V(H)$. 
        \item If a vertex $w \in V(H) \setminus \{u,v\}$ has no neighbor in $R \cap \{u,v\}$, then $w$ does not belong to $H'$.
    \end{itemize}
    Now, suppose the lemma is false. We make the following claim.
    \begin{claim}
    For each vertex $w \in V(H')$, $g(w) \geq \deg_{H'}(w)$. 
    \end{claim}
    \begin{proof}
        For each $w \in V(H') \setminus \{u,v\}$, $g(w) = f(w) \geq \deg_H(w) \geq \deg_{H'}(w)$. Now, we consider three cases for $u$ and $v$.

        If $u,v \in R$, then for each $z \in \{u,v\} \cap V(H')$, $g(z) = f(z) - 1 \geq \deg_H(z) + 1 - 1 \geq \deg_{H'}(z)$. If $u,v \in V(H) \setminus R$, then $u$ and $v$ each have a neighbor in $R$ which does not appear in $H'$. Therefore, for each $z \in \{u,v\}  \cap V(H') $, $g(z) = f(z) - 1 \geq \deg_H(z) - 1 \geq \deg_{H'}(z)$. Finally, if $u \in V(H) \setminus R$ and $v \in R$, then 
        if $uv \in E(G)$, the lemma is proven; otherwise, $uv \not \in E(G)$. Then, if $u \in V(H')$, then $u$ has a neighbor in $R \setminus \{u,v\}$ which does not appear in $H'$, so that $g(u) = f(u) - 1 \geq \deg_H(u) - 1 \geq \deg_{H'}(u)$. Furthermore, if $v \in V(H')$, then $g(v) = f(v) - 1 \geq \deg_H(v) + 1 -1 \geq \deg_{H'}(v)$. This completes the proof.
    \end{proof}

    Now, if $H'$ is not $g$-choosable, then Lemma \ref{lem:ERT} tells us that $H'$ has a component  $T$ isomorphic to a Gallai tree and that $g(w) = \deg_{H'}(w)$ for each $w \in V(H')$. Then, letting $U = \{u,v\} \cap V(H')$, $f$ is bad on $T$, a contradiction.
\end{proof}

We frequently use the following lemma with the function $f = \ell_{H,4}$ to identify small $(k,\alpha)$-reducible subgraphs while proving Theorem \ref{thm:mad113}.

\begin{lemma}
\label{lem:fix}
    Let $H$ be a connected graph, and let $R \subseteq V(H)$ be a dominating set of $H$. Let $f:V(H) \rightarrow \mathbb N$ satisfy $f(r) \geq \deg_H(r) + 1$ for each $r \in R$ and $f(w) \geq \deg_H(w)$ for each $w \in V(H) \setminus R$. Suppose that the following hold:
    \begin{enumerate}
        \item For each connected subset $U \subseteq V(H)$ of size at most $2$, either $|U| = 2$ and $U \subseteq R$, or each component of $H \setminus U$ contains a vertex $r \in R$.
        \item $H$ contains no induced Gallai tree subgraph $T$ on which $f$ is bad.
    \end{enumerate}
    Then, $H$ is weakly $(f,4)$-reductive.
\end{lemma}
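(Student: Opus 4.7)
The plan is to verify properties (FIX') and (FORB-2) separately for an arbitrary $f$-assignment $L$ on $H$. Both verifications reduce to the Erd\H{o}s--Rubin--Taylor characterization (Lemma \ref{lem:ERT}): hypothesis (1) will do the heavy lifting for (FIX'), while hypothesis (2) will be used in a Gallai-tree obstruction analysis for the residual case of (FORB-2).

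For (FIX'), fix $v \in V(H)$ and $c \in L(v)$, and reduce to showing that $H - v$ is $L'$-colorable, where $L'(w) = L(w) \setminus \{c\}$ for $w \in N(v)$ and $L'(w) = L(w)$ otherwise; setting $\phi(v) = c$ then completes the desired $L$-coloring. Fix a component $C$ of $H - v$. I would verify by routine casework on membership in $R$ and in $N(v)$ that $|L'(w)| \geq \deg_C(w)$ for every $w \in V(C)$, with strict inequality whenever $w \in R$. Hypothesis (1), applied to the connected set $\{v\}$, guarantees $R \cap V(C) \neq \emptyset$, so some vertex of $C$ enjoys strict slack, and Lemma \ref{lem:ERT} yields the required $L'$-coloring of $C$.

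For (FORB-2), fix (possibly equal) $v_1, v_2 \in V(H)$ and $c \in L(v_1) \cup L(v_2)$. It suffices to prove that $H$ is $(f - 1_{\{v_1, v_2\}})$-choosable, since removing $c$ from each $L(v_j)$ yields lists of exactly this size. If $v_1 = v_2$, or $v_1 \neq v_2$ with $v_1 v_2 \notin E(H)$, or $v_1 v_2 \in E(H)$ with both or neither of $v_1, v_2$ in $R$, then Lemma \ref{lem:Gallai} applies directly, invoking hypothesis (2) to rule out bad Gallai trees. The residual case --- and the main obstacle --- is $v_1 v_2 \in E(H)$ with exactly one endpoint in $R$, where Lemma \ref{lem:Gallai} provides no conclusion.

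To handle this residual case (WLOG $v_1 \in R$, $v_2 \notin R$), I would mimic the deletion procedure from the proof of Lemma \ref{lem:Gallai}: iteratively delete any vertex $w$ with $g(w) > \deg(w)$, where $g := f - 1_{\{v_1, v_2\}}$, and let $H'$ denote the result; a standard greedy argument shows that $H$ is $g$-choosable if and only if $H'$ is. Every $r \in R \setminus \{v_1\}$ satisfies $g(r) = f(r) \geq \deg_H(r) + 1$ and is therefore deleted, so $R \cap V(H') \subseteq \{v_1\}$. If $H'$ were not $g$-choosable, Lemma \ref{lem:ERT} would exhibit an induced Gallai tree component $T$ of $H'$ with $g(w) = \deg_T(w)$ for every $w \in V(T)$. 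A short case analysis on whether each of $v_1, v_2$ lies in $V(T)$ shows that $U := \{v_1, v_2\} \cap V(T)$ satisfies $(f - 1_U)(w) = \deg_T(w)$ for every $w \in V(T)$, so $f$ is bad on $T$ --- contradicting hypothesis (2). Hence $H'$, and therefore $H$, is $g$-choosable, completing (FORB-2).
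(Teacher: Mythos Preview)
Your argument for (FIX') and for the non-residual cases of (FORB-2) matches the paper's. The difference is in the residual case $v_1v_2 \in E(H)$ with $v_1 \in R$, $v_2 \notin R$.

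There the paper takes a direct two-step route that uses both hypotheses: hypothesis (2) forces the $K_2$ on $\{v_1,v_2\}$ to be $(f-1)$-choosable (else $f$ would be bad on that Gallai tree with $U=\{v_1,v_2\}$), so one first colors $v_1,v_2$ avoiding $c$; then hypothesis (1), applied to the connected set $U=\{v_1,v_2\}$ (which is not contained in $R$), guarantees an $R$-vertex in every component of $H\setminus\{v_1,v_2\}$, and Lemma \ref{lem:ERT} extends the coloring.

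Your deletion approach has a genuine gap. To invoke Lemma \ref{lem:ERT} on $H'$ you need $g(w) \geq \deg_{H'}(w)$ for all $w\in V(H')$. This is immediate for $w\neq v_2$, but for $w=v_2$ you only know $g(v_2)=f(v_2)-1\geq \deg_H(v_2)-1$, so you must show that $v_2$, if it survives, loses at least one neighbor. This is precisely the step that the proof of Lemma \ref{lem:Gallai} sidesteps by declaring ``the lemma is proven'' when $uv$ is an edge with exactly one endpoint in $R$ --- so mimicking that proof does not cover your case. The step can in fact be recovered: if $v_2$ survived with all neighbors, then (since $R\setminus\{v_1\}$ is deleted and every surviving vertex other than $v_2$ is forced to retain all its neighbors) a cascade through the connected graph $H$ yields $V(H')=V(H)$ and hence $R=\{v_1\}$, whereupon hypothesis (1) with $U=\{v_1\}$ gives a contradiction. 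But this is not the ``short case analysis on whether each of $v_1,v_2$ lies in $V(T)$'' that you describe, and, crucially, it requires hypothesis (1), which your residual-case argument never invokes.
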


\begin{proof}
    Let $L$ be an $f$-assignment on $H$.
    If $f(v) \leq 1$ for some vertex $v \in V(H)$, then $f$ is bad on the Gallai tree $v$; 
    therefore, $f(v) \geq 2$ for each $v \in V(H)$.

     First, we show that (FIX') holds for $H$ and $L$.
    Consider a vertex $v \in V(H)$ and a color $c \in L(v)$. If we first color $v$ with $c$, then writing $U = \{v\}$ and $H' = H \setminus U$, each remaining vertex $u \in V(H')$ has at least $\deg_{H'}(u)$ available colors. 
    By (1), at least one vertex $r$ in each component of $H'$ has at least $\deg_{H'}(r) + 1$ available colors, so
    by Lemma \ref{lem:ERT}, we can complete an $L$-coloring $\phi$ of $H$ satisfying $\phi(v) = c$. 

    Next, we show that (FORB-$2$) holds for $H$ and $L$. Consider a vertex pair $u,v \in V(H)$. 
    If $u=v$, then (FIX') implies that $H$ has an $L$-coloring that avoids $c$ at $u=v$. Otherwise, suppose that $u \neq v$.
    If $uv \not \in E(H)$ or $u,v \in R$, then as $H$ has no induced Gallai tree subgraph $T$ on which $f$ is bad, Lemma \ref{lem:Gallai} implies that
    $H$ is $f - 1_{\{u,v\}}$-choosable. Hence, we can find an $L$-coloring $\phi$ of $H$ for which $c \not \in \{\phi(u),\phi(v)\}$. On the other hand, if $uv \in E(G)$ and $\{u,v\} \not \subseteq R$,
    then by (2), $H[\{u,v\}]$ is $(f-1)$-choosable; hence, we assign colors $\phi(u)$ and $\phi(v)$ to $u$ and $v$, respectively, which do not equal $c$.
    Then, we let $U = \{u,v\}$ and $H' = H \setminus U$, and each remaining vertex $w \in V(H')$
    has at least $\deg_{H'}(w)$ available colors. Furthermore,
    by (1), at least one vertex $r$ in each component of $H'$ has at least $\deg_{H'}(r) + 1$ available colors. 
    Therefore, by Lemma \ref{lem:ERT}, we can 
    complete an $L$-coloring $\phi$ of $H$ satisfying $c \not \in \{\phi(u),\phi(v)\}$.
\end{proof}

\subsection{A partition lemma}
The goal of this subsection is to introduce a lemma (Lemma \ref{lem:orig_partition}) that will help us identify reductive subgraphs of a given graph $G$. In order to state our lemma, we first need some definitions.
Given a graph $H$, we say that a \emph{subgraph partition} of $H$ is a family $\mathcal H$ of induced subgraphs of $H$ such that $\{V(H_i): H_i \in \mathcal H\}$ is a partition of $V(H)$.
We say that two parts $H_i,H_j \in \mathcal H$ are \emph{adjacent} if some edge of $H$ has an endpoint in $H_i$ and an endpoint in $H_j$.

\begin{definition}
\label{def:superscript}
    Given a graph $H$, a 
    function $f \colon V(H) \rightarrow \mathbb{Z}$, and a subset $U \subseteq V(H)$, we
    define $f^U \colon V(H) \setminus U \rightarrow \mathbb{Z}$ such that $f^{U}(v) = f(v) - |N_H(v) \cap U|$ for all $v \in V(H) \setminus U$. When $X$ is a subgraph of $H$, we define $f^X := f^{V(X)}$. 
\end{definition}

We note that in the definition above, if $L$ is an $f$-assignment on $H$, then after giving $H[U]$ an $L$-coloring,
each vertex $v \in V(H) \setminus U$ has at least $f^U(v)$ available colors in $L(v)$.

\begin{definition}
\label{def:strong-part}
Let $H$ be a graph, and let $\mathcal{H} = \{H_1, \ldots, H_m\}$ be a subgraph partition of $H$.
Let $f \colon V(H) \rightarrow \mathbb{Z}$ be a 
function on $H$. 
We say that $H_i \in \mathcal H$ is an \emph{$f$-strong part} of $\mathcal H$ if the following hold:
\begin{enumerate}
    \item \label{item:forb2} For each $f$-assignment $L$ on $H_i$, (FIX') and (FORB-2) hold;
    \item \label{item:strong-forb1} For each part $H_j \in \mathcal H  - H_i$ and each $f^{H_j}$-assignment $L$ on $H_i$, (FORB-1) holds;
    \item \label{item:color-last} 
    $H_i$ is $f^{H \setminus H_i}$-choosable.
\end{enumerate}
\end{definition}
\begin{definition}
\label{def:weak-part}
Let $H$ be a graph, and let $\mathcal{H} = \{H_1, \ldots, H_m\}$ be a subgraph partition of $H$.
Let $f \colon V(H) \rightarrow \mathbb{Z}$ be a 
function on $H$.
 We say that a subgraph $H_i \in \mathcal H$ is an \emph{$f$-weak part} of $\mathcal H$ if
 \begin{enumerate}
     \item For each $f$-assignment $L$ on $H_i$,  (FIX') and (FORB-2) hold,
     \item 
           \label{item:color-third}
     For each pair $H_j, H_{j'} \in \mathcal{H} - H_i$,
     $H_i$ is $f^{H_j \cup H_{j'}}$-choosable.
\end{enumerate}
\end{definition}

Note that if $H_i$ is an $f$-strong part of $\mathcal H$, then $H_i$ is also an $f$-weak part of $\mathcal H$.
The following lemma shows that if $H$ has 
a subgraph partition $\mathcal H = \{H_1, \dots, H_m\}$ in which every part $H_i \in \mathcal H$ is weak and at least $m-1$ parts of
$\mathcal H$ are strong,
then $H$ is a reductive graph. This lemma is a critical tool for our proof of Theorem \ref{thm:mad113}.

\begin{lemma}
\label{lem:orig_partition}
    Let $d \geq 4$, and let $H$ be a graph for which
    each vertex $v \in V(H)$ satisfies $\deg(v) \leq d$.
    Let $f:V(H) \rightarrow \mathbb Z$ be a function satisfying $f(v) \leq 4$ for each $v \in V(H)$. Suppose that there exists a subgraph partition $\mathcal{H} = \{H_1, \ldots, H_m\}$ of $H$ and an integer $b \geq 4$
    such that the following hold:
    \begin{itemize}

        \item $H_1$ is a $f$-weak part of order at most $b$;

        \item 
        $H_i$ is a $f$-strong part of order at most $b$ for each $2 \leq i \leq m$.
    \end{itemize}
    Then $H$ is $(f, 4, (bd)^{-4}4^{-2b})$-reductive.
\end{lemma}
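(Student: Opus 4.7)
My plan is to exhibit, for every $f$-assignment $L$ on $H$, a random procedure for coloring $H$ whose induced distribution on $L$-colorings satisfies (FIX) and (FORB) with $\alpha = (bd)^{-4} 4^{-2b}$. I would process the parts of $\mathcal{H}$ sequentially in the order $H_1, H_2, \ldots, H_m$: first sample $\phi_1$ uniformly at random from the $L|_{V(H_1)}$-colorings of $H_1$, and for each $i \geq 2$ in turn, sample $\phi_i$ uniformly at random from the $L_i$-colorings of $H_i$, where $L_i$ denotes the list assignment induced on $H_i$ by the colorings $\phi_1, \ldots, \phi_{i-1}$. The procedure is well-defined: weak property 1 (via (FIX')) guarantees that $L|_{V(H_1)}$-colorings of $H_1$ exist, and strong property 3 guarantees that $L_i$-colorings of $H_i$ exist for $i \geq 2$, since $|L_i(v)| \geq f^{H \setminus H_i}(v)$ and $H_i$ is $f^{H \setminus H_i}$-choosable. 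Because $|V(H_i)| \leq b$ and $|L_i(v)| \leq 4$, each sampled $\phi_i$ has probability at least $4^{-b}$.

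To verify (FIX) at $v \in V(H_i)$ and $c \in L(v)$, I would split into cases based on $i$. If $i = 1$, then weak property 1 (FIX') applied to the $f$-assignment $L|_{V(H_1)}$ produces an $L|_{V(H_1)}$-coloring of $H_1$ with $\phi_1(v) = c$, so uniform sampling gives $\Pr(\phi_1(v) = c) \geq 4^{-b}$. If $i \geq 2$, I would identify a compatibility event $\mathcal{C}$ on $\phi_1, \ldots, \phi_{i-1}$ under which the induced $L_i$ behaves sufficiently like an $f$-assignment on $H_i$ (in particular, $c \in L_i(v)$) for strong property 1 (FIX') to produce an $L_i$-coloring assigning $c$ to $v$; conditioned on $\mathcal{C}$ and a fixed realization of $\phi_{<i}$, the uniform sampling of $\phi_i$ then gives conditional probability at least $4^{-b}$ of assigning $c$ to $v$. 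The verification of (FORB) for $U \subseteq V(H)$ with $|U| \leq 2$ is analogous, using (FORB-2) in place of (FIX') and an appropriate analog of $\mathcal{C}$.

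The main obstacle, and the source of the $(bd)^{-4}$ factor, is lower-bounding $\Pr(\mathcal{C})$ in the case $i \geq 2$. The compatibility event requires that $\phi_1, \ldots, \phi_{i-1}$ avoid coloring up to $d$ external neighbors of $v$ (potentially distributed across multiple earlier parts) with certain specific colors. I would proceed by conditioning on the colorings of earlier parts one at a time, invoking strong property 2 (which gives (FORB-1) for $f^{H_j}$-assignments on $H_i$, the precise assignment type that arises after colorings from a single earlier part are accounted for) to control the probability at each step. Since each part has at most $b$ vertices of degree at most $d$, and since the (FIX) / (FORB) conditions for $k = 4$ involve controlling at most $4$ such compatibility constraints, we lose at most a factor of $(bd)^{-1}$ at each of at most $4$ conditioning steps, yielding the $(bd)^{-4}$ factor. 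Combined with the uniform samplings at $H_1$ and $H_i$ (each contributing $4^{-b}$ for a total of $4^{-2b}$), this gives the required $\alpha = (bd)^{-4} 4^{-2b}$.
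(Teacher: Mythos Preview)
Your approach has a genuine gap. With the fixed order $H_1, \ldots, H_m$, when you reach a part $H_i$ (with $i \geq 2$) containing the target vertex $v$ for (FIX), the reduced lists $L_i$ on $H_i$ are in general \emph{not} an $f$-assignment: every previously colored neighbor of $H_i$ has shortened the lists. Strong property~1 only asserts (FIX') for genuine $f$-assignments on $H_i$, so you cannot invoke it. Your compatibility event $\mathcal{C}$, as you describe it, only forces earlier parts to avoid $c$ at the external neighbors of the single vertex $v$; this ensures $c \in L_i(v)$ but does nothing to restore $|L_i(w)| = f(w)$ at the other vertices $w \in V(H_i)$. Making $\mathcal{C}$ strong enough to recover a full $f$-assignment would require every earlier neighbor of every $w \in V(H_i)$ to avoid every color of $L(w)$---up to four forbidden colors per vertex---which neither (FORB-1) nor (FORB-2) can deliver. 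The same obstruction blocks your intended use of strong property~2: it gives (FORB-1) only for $f^{H_j}$-assignments, i.e.\ after a \emph{single} adjacent part has been colored, whereas in a fixed order a late part may have many adjacent predecessors. Your claimed ``at most $4$ conditioning steps'' has no justification.

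The paper's proof avoids this by randomizing which parts are colored first. It properly colors the square of the auxiliary part-adjacency graph with at most $(bd)^2$ labels (possible since that graph has maximum degree at most $bd$), so that parts sharing a label are pairwise nonadjacent and no part has two neighbors with the same label. The procedure then picks labels $t_1, t_2$ uniformly at random. With probability at least $(bd)^{-2}$ the part containing $v$ receives label $t_1$ and is colored \emph{first}, so its lists form a true $f$-assignment and (FIX') applies directly, giving the $4^{-b}$ factor. For (FORB) with $u \in V(H_i)$ and $v \in V(H_j)$ in parts of different labels, with probability at least $(bd)^{-4}$ one has $t_1 = \tau(H_i)$ and $t_2 = \tau(H_j)$; then $H_i$ is colored first (apply (FORB-2), cost $4^{-b}$), and when $H_j$ is colored it has at most one already-colored neighboring part by the square-coloring property, so its lists form an $f^{H_i}$-assignment and strong property~2 applies (cost $4^{-b}$). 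The weak part $H_1$ is accommodated by always placing its label among $\{t_1, t_2, t_3\}$, so at most two of its neighbors precede it, which is exactly what weak property~2 handles. This random-label device is the missing ingredient.
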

\begin{proof}
    We define an auxiliary graph $A$ for which $V(A) = \mathcal H$. We let two parts $H_i,H_j \in \mathcal H$ be adjacent in $A$ if and only if $H_i$ and $H_j$ are adjacent parts of $\mathcal H$.
    As each part $H_i$ consists of at most $b$ vertices of degree at most $d$, the maximum degree of $A$ is at most $bd$. As $b \geq 4$ and $d \geq 4$, in particular $bd \not \in \{2,3,7,57\}$; therefore, by \cite[Equation 1]{AlonMohar}, $\chi(A^2) \leq (bd)^2$.
    Thus, we use the set $\{1,\dots,(bd)^2\}$
    to label each part
    $H_i \in \mathcal{H}$ with a label $\tau(H_i)$ such that
    $\tau(H_i) \neq \tau(H_j)$ for each adjacent pair $H_i, H_j \in \mathcal H$, and so that
    no part $H_i \in \mathcal{H}$ has distinct 
    adjacent parts $H_j$ and $H_{j'}$ satisfying $\tau(H_j) = \tau(H_{j'})$. 
    (A simpler greedy argument shows that $(bd)^2 + 1$ distinct labels suffice; however, we use $(bd)^2$ labels for the sake of algebraic simplicity.)

    Now, given an $f$-assignment $L$ on $H$, we give $H$ a random $L$-coloring using the following procedure. We first define the procedure and then show that it successfully produces an $L$-coloring of $H$.
    \begin{itemize}
    \item First, we choose a value $t_1 \in \{1,\dots,(bd)^2\}$ uniformly at random. For each part $H_i \in \mathcal{H}$ with label $\tau(H_i) = t_1$, we choose an $L$-coloring of $H_i$ uniformly at random. %
    \item Next, we choose a value $t_2 \in \{1,\dots,(bd)^2\} -t_1$ uniformly at random.
    For each part $H_i \in \mathcal{H}$ with label $\tau(H_i) = t_2$, we 
    give $H_i$ an $L$-coloring chosen uniformly at random from the set of $L$-colorings of $H_i$ that do not create a monochromatic edge in $H$.
    \item Next, if $\tau(H_1) \not \in \{t_1, t_2\}$, we let $t_3 = \tau(H_1)$; otherwise, we choose $t_3$ uniformly at random from $\{1, \dots, (bd)^2\} \setminus \{t_1,t_2\}$. Then, for each part $H_i \in \mathcal{H}$ with label $\tau(H_i) = t_3$, we give $H_i$ an $L$-coloring chosen uniformly at random from the set of $L$-colorings of $H_i$ that do no create a monochromatic edge in $H$. 
    \item Finally, 
    we give an arbitrary order to the remaining uncolored parts $H_i \in \mathcal H$. Then,
    we consider each uncolored part $H_i \in \mathcal{H}$ in order, and we give $H_i$ an $L$-coloring chosen uniformly at random from the set of $L$-colorings of $H_i$ that do not create a monochromatic edge in $H$. 
    \end{itemize}

    We claim that this procedure produces a proper $L$-coloring of $H$.
    To this end, we check that each part $H_i \in \mathcal H$ is successfully colored.
    When $H_1$ is colored, no more than two parts adjacent to $H_1$ have been colored. Hence, 
    as $H_1$ is an $f$-weak part of $\mathcal H$, condition (\ref{item:color-third}) of Definition \ref{def:weak-part} 
    implies that
    $H_1$ is colored successfully. Additionally, as each part $H_i \in \mathcal{H} - H_1$ is $f$-strong,
    condition (\ref{item:color-last}) of 
    Definition \ref{def:strong-part}  implies that
    $H_i$ is $f^{H \setminus H_i}$-choosable, and hence our procedure successfully gives an $L$-coloring to $H_i$. So, for each $H_i \in \mathcal{H} - H_1$, the procedure successfully colors $H_i$, and hence, the procedure above produces an $L$-coloring of $H$. We call this $L$-coloring $\phi$. 

    Next, we demonstrate that the 
    random coloring $\phi$ of $H$ produced by our procedure has a distribution satisfying
    (FIX) and (FORB) with the constant $\alpha = (bd)^{-4} 4^{-2b}$.
    We repeatedly use the fact that each part $H_i \in \mathcal H$ has at most $b$ vertices and hence at most $4^b$ $L$-colorings.
    For (FIX), let $v \in V(H)$ and $c \in L(v)$.
    Write $H_i$ for the part of $\mathcal H$ containing $v$.
    With probability at least $(bd)^{-2}$, $t_1 = \tau(H_i)$. Then, because (FIX') holds for $H_i$, $v$ is colored with $c$ with conditional probability at least $4^{-b}$. Hence, $\phi(v) = c$ with probability at least $(bd)^{-2} 4^{-b} > \alpha$.

    Now, we show the distribution on $\phi$ satisfies (FORB) with $\alpha = (bd)^{-4} 4^{-2b}$. Fix $u, v \in V(H)$ and $c \in L(u) \cup L(v)$. Let $u \in V(H_i)$ and $v \in V(H_j)$. We consider two cases.
    \begin{enumerate}
        \item If $\tau(H_i) = \tau(H_j)$, then with probability at least $(bd)^{-2}$, $t_1 = \tau(H_i) = \tau(H_j)$. 
        If $H_i \neq H_j$, then $H_i$ and $H_j$ are not adjacent.
        By applying (FORB-$2$) to $H_i$ and $H_j$ using condition (\ref{item:forb2}) of Definitions \ref{def:strong-part} and \ref{def:weak-part},
         $\phi$ avoids $c$ at both $u$ and $v$
        with conditional probability at least $4^{-2b}$. 
        On the other hand, if $H_i = H_j$, then
        $\phi$ avoids $c$ at both $u$ and $v$ with conditional probability at least $4^{-b}$. %
        Hence, with probability at least $(bd)^{-2} 4^{-2b} > \alpha$, $c \not \in \{\phi(u),\phi(v)\}$.

        \item On the other hand, suppose $\tau(H_i) \neq \tau(H_j)$. We assume without loss of generality that $H_1 \neq H_j$, so that $H_j$ is an $f$-strong part of $\mathcal H$. With probability greater than $(bd)^{-4}$, $t_1 = \tau(H_i)$ and $t_2 = \tau(H_j)$. 
        Condition (\ref{item:forb2}) of Definitions \ref{def:strong-part} and \ref{def:weak-part} implies that (FORB-$2$) holds for 
        $H_i$,  so
        $\phi(u) \neq c$ with conditional probability at least $4^{-b}$. 
        Then, 
        condition (\ref{item:strong-forb1}) of Definition \ref{def:strong-part} implies that
        (FORB-$1$)  holds for  $H_j$ with the available colors, 
        so $\phi(v) \neq c$ 
        with conditional probability at least $4^{-b}$. Overall, $c \not \in \{\phi(u),\phi(v)\}$
        with probability at least $(bd)^{-4} 4^{-2b}$. 
    \end{enumerate}

    Overall, our procedure satisfies (FIX) and (FORB) with a probability constant $\alpha = (bd)^{-4} 4^{-2b}$, and hence $H$ is $(f, 4,(bd)^{-4}4^{-2b})$-reductive.
\end{proof}

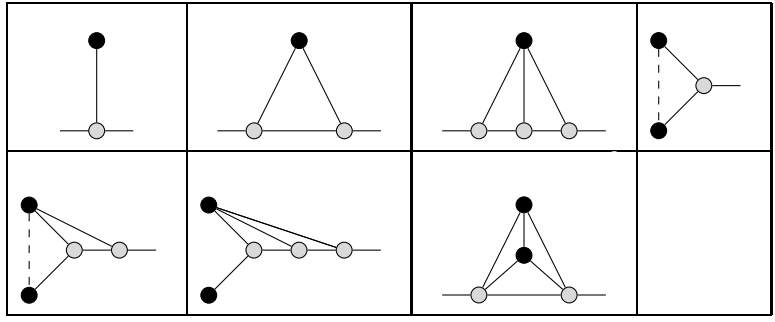
\begin{figure}
\begin{tabular}{ |c|c|c|c| } 
 \hline
\begin{tikzpicture}
[scale=1.2,auto=left,every node/.style={circle,fill=gray!30,minimum size = 6pt,inner sep=0pt}]
\node(r1) at (0,1) [draw = black, fill = black] {};
\node(p1) at (0,0) [draw = black] {};
\node(h1) at (-0.5,0) [draw=white,fill=white]  {};
\node(h2) at (0.5,0) [draw=white,fill=white] {};
\foreach \from/\to in {r1/p1,p1/h1,p1/h2}
    \draw (\from) -- (\to);
\end{tikzpicture} & \begin{tikzpicture}
[scale=1.2,auto=left,every node/.style={circle,fill=gray!30,minimum size = 6pt,inner sep=0pt}]
\node(r1) at (0,1) [draw = black, fill = black] {};
\node(z) at (0.5,1.4) [draw=white,fill=white,minimum size=0pt,inner sep=0pt] {};
\node(p1) at (-0.5,0) [draw = black] {};
\node(p3) at (0.5,0) [draw = black] {};
\node(h1) at (-1,0) [draw=white,fill=white]  {};
\node(h2) at (1,0) [draw=white,fill=white]  {};
\foreach \from/\to in {r1/p1,r1/p3,p1/p3,p1/h1,p3/h2}
    \draw (\from) -- (\to);
\end{tikzpicture} & \begin{tikzpicture}
[scale=1.2,auto=left,every node/.style={circle,fill=gray!30,minimum size = 6pt,inner sep=0pt}]
\node(r1) at (0,1) [draw = black, fill = black] {};
\node(p1) at (-0.5,0) [draw = black] {};
\node(p2) at (0,0) [draw = black] {};
\node(p3) at (0.5,0) [draw = black] {};
\node(h1) at (-1,0) [draw=white,fill=white] {};
\node(h2) at (1,0) [draw = white,fill=white] {};
\foreach \from/\to in {r1/p1,r1/p2,r1/p3,p1/p2,p2/p3,p3/h2,p1/h1}
    \draw (\from) -- (\to);
\end{tikzpicture}  & \begin{tikzpicture}
[scale=1.2,auto=left,every node/.style={circle,fill=gray!30,minimum size = 6pt,inner sep=0pt}]
\node(r1) at (0,0.5) [draw = black, fill = black] {};
\node(r2) at (0,-0.5) [draw = black, fill = black] {};
\node(p3) at (0.5,0) [draw = black] {};
\node(h2) at (1,0) [draw = white,fill=white] {};
\draw [dashed] (r1) -- (r2);
\foreach \from/\to in {r1/p3,p3/h2,r2/p3}
    \draw (\from) -- (\to);
\end{tikzpicture}
\\ \hline
\begin{tikzpicture}
[scale=1.2,auto=left,every node/.style={circle,fill=gray!30,minimum size = 6pt,inner sep=0pt}]
\node(r1) at (0,0.5) [draw = black, fill = black] {};
\node(r2) at (0,-0.5) [draw = black, fill = black] {};
\node(p1) at (0.5,0) [draw = black] {};
\node(p2) at (1,0) [draw = black] {};
\node(h2) at (1.5,0) [draw=white,fill=white]  {};
\draw [dashed] (r1) -- (r2);
\foreach \from/\to in {r1/p3,p1/p2,r2/p1,p2/h2,r1/p2}
    \draw (\from) -- (\to);
\end{tikzpicture} & 

    \begin{tikzpicture}
[scale=1.2,auto=left,every node/.style={circle,fill=gray!30,minimum size = 6pt,inner sep=0pt}]
\node(r1) at (0,0.5) [draw = black, fill = black] {};
\node(r2) at (0,-0.5) [draw = black, fill = black] {};
\node(p1) at (0.5,0) [draw = black] {};
\node(p2) at (1,0) [draw = black] {};
\node(p3) at (1.5,0) [draw = black] {};
\node(h2) at (2,0) [draw=white,fill=white] {};
\foreach \from/\to in {r1/p3,r1/p1,p1/p2,p2/p3,r2/p1,p3/h2,r1/p2,r1/p3}
    \draw (\from) -- (\to);
\end{tikzpicture}
& 
\begin{tikzpicture}
[scale=1.2,auto=left,every node/.style={circle,fill=gray!30,minimum size = 6pt,inner sep=0pt}]
\node(r1) at (0,1) [draw = black, fill = black] {};
\node(p1) at (-0.5,0) [draw = black] {};
\node(p2) at (0,0.44) [draw = black,fill=black] {};
\node(p3) at (0.5,0) [draw = black] {};
\node(h1) at (-1,0)[draw=white,fill=white]  {};
\node(h2) at (1,0) [draw=white,fill=white] {};
\node(z) at (1,1.5) [draw=white,fill=white] {};
\foreach \from/\to in {r1/p1,r1/p2,r1/p3,p1/p2,p2/p3,p3/h2,p1/p3,p1/h1}
    \draw (\from) -- (\to);
\end{tikzpicture}
 & 

 \\ 
 \hline
\end{tabular}

\caption{The figure shows the subgraphs of $H$ listed in Lemma \ref{lem:strong-parts}. 
The half-edges drawn in each image indicate that a vertex has a neighbor in $H$ which is not part of the subgraph.
The dashed edges denote an edge that may or may not exist in $H$.
The function $f$ in the lemma satisfies $f(r) \geq \deg_H(r) + 1$
for each vertex $r$ drawn in black and $f(v) \geq \deg_H(v)$ for each vertex $v$ drawn in grey.
}
\label{fig:strong-parts}
\end{figure}

We say that a \emph{petal graph} is a graph obtained from a path $P$ by adding a vertex $r$ adjacent to each vertex of $P$. We say that $r$ is the \emph{root} of the petal graph.
The following lemma describes several strong parts that we often use in the proof of our main result. The strong parts in the following lemma are sketched in Figure \ref{fig:strong-parts}.

\begin{lemma} 
\label{lem:strong-parts}
Let $H$ be a graph with a function $f:V(H) \rightarrow \mathbb Z$, and let $\mathcal H$ be a subgraph partition of $H$.
Then, a part $H_i \in \mathcal H$ is an 
$f$-strong part if 
one of the following holds:  
\begin{enumerate}
    \item \label{item:internal-petal} $H_i$ is a petal graph with a root $r$ and a path $P = (v_1, \dots, v_t)$ satisfying the following properties:
    \begin{itemize}
        \item $1 \leq t \leq 3$;
        \item $f(r) \geq \deg_H(r) + 1$;
        \item $f(v) \geq \deg_H(v)$ for each $v \in V(P)$;
        \item $v_1$ has a neighbor in a part $H_j \in \mathcal H - H_i$;
        \item $v_t$ has a neighbor in a part $H_{j'} \in \mathcal H \setminus \{H_i, H_j\}$.
    \end{itemize}
    \item $H_i$ is obtained from a petal \label{item:strong-end-piece} graph with a root $r$ and a path $P = (v_1, \dots, v_t)$ by adding a neighbor $r' \in N(v_1)$ and possibly the edge $rr'$, so that the following properties are satisfied:
    \begin{itemize}
        \item $1 \leq t \leq 3$;
        \item $f(r) \geq \deg_H(r) + 1$;
        \item $f(r') \geq \deg_H(r') + 1$;
        \item $f(v) \geq \deg_H(v)$ for each $v \in V(P)$;
        \item $v_t$ has a neighbor in $H \setminus H_i$.
    \end{itemize}
    \item \label{item:special-K4} $H_i$ is a $K_4$ with vertices $v_1, v_2, v_3, v_4$ satisfying the following properties:
    \begin{itemize}
        \item $f(v_j) \geq \deg_H(v_j) + 1$ for $j \in \{1,2\}$;
        \item $f(v_j) \geq \deg_H(v_j)$ for $j \in \{3,4\}$;
        \item Each of $v_3, v_4$ has a neighbor in $H \setminus H_i$.
    \end{itemize}
\end{enumerate}
\end{lemma}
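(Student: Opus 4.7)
The plan is to verify, for each of the three cases, the three conditions of Definition \ref{def:strong-part}. The unifying observation is that if $R \subseteq V(H_i)$ denotes the black vertices in Figure \ref{fig:strong-parts} together with the non-black vertices of $H_i$ that have a neighbor outside $H_i$, then $R$ dominates $H_i$ and every $r \in R$ satisfies $f(r) \geq \deg_{H_i}(r) + 1$. The black vertices satisfy this by hypothesis, while each external-neighbor vertex $v$ satisfies it because every external neighbor contributes to $\deg_H(v)$ but not to $\deg_{H_i}(v)$, giving $f(v) \geq \deg_H(v) \geq \deg_{H_i}(v) + 1$. Moreover, the bound $f(v) \leq 4$ from the context of Lemma \ref{lem:orig_partition} forces each black vertex to have no external neighbor.

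For conditions (FIX') and (FORB-2), I would apply Lemma \ref{lem:fix} to $H_i$ with function $f$ and dominating set $R$. The connected-cut condition reduces to a direct check since each $H_i$ has small order (at most five vertices). The no-bad-Gallai-subtree condition uses the fact that in any induced Gallai subtree $T \subseteq H_i$, some $r \in R \cap V(T)$ has slack $f(r) \geq \deg_{H_i}(r) + 1 \geq \deg_T(r) + 1$ that cannot be absorbed by removing at most two colors. For condition (3), since black vertices have no external neighbors, $f^{H \setminus H_i}(r) = f(r) \geq \deg_{H_i}(r) + 1$ at every black $r$ and $f^{H \setminus H_i}(v) \geq \deg_{H_i}(v)$ elsewhere, so Lemma \ref{lem:ERT} yields $f^{H \setminus H_i}$-choosability of the connected graph $H_i$.

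For condition (2), I would apply Lemma \ref{lem:Gallai} to $H_i$ with function $f^{H_j}$ and the same dominating set $R$. Since black vertices have no external neighbors at all, they have no neighbors in $H_j$, so $f^{H_j}(r) = f(r) \geq \deg_{H_i}(r) + 1$, and both the degree hypothesis and the no-bad-Gallai-subtree check carry over from the previous step essentially verbatim. Taking $u = v = v^*$ in Lemma \ref{lem:Gallai} (so that the ``$uv$ is an edge with one endpoint in $R$'' exception is vacuous) then yields that $H_i$ is $(f^{H_j} - 1_{\{v^*\}})$-choosable, which is precisely (FORB-1) at $v^*$.

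The main obstacle is the case-by-case enumeration of induced Gallai subtrees and of cut subsets $U \subseteq V(H_i)$ of size at most $2$, especially in case (2) where the optional edge $rr'$ produces two structural variants of $H_i$ that must be inspected separately. A subsidiary subtlety arises when $|V(T)| \leq 2$ or when $R \cap V(T) \subseteq U$, so that the clean argument ``pick $r \in R \cap V(T) \setminus U$'' does not apply and one must argue directly at the non-$R$ vertices of $T$. Here the proof uses the additional slack coming from multiple external neighbors; for instance in case (1) with $t = 1$, the sole non-black path-vertex $v_1$ has two external neighbors in distinct parts and hence slack $+2$ over $\deg_{H_i}$, which is enough to cover the situation $|R| = 2$. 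Because every vertex of $H_i$ in every variant carries such slack and $H_i$ is small, all checks reduce to routine inspection.
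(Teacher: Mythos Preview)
Your approach to conditions (1) and (3) of Definition \ref{def:strong-part} matches the paper's: apply Lemma \ref{lem:fix} for (FIX') and (FORB-2), and apply Lemma \ref{lem:ERT} for $f^{H\setminus H_i}$-choosability. Two remarks, though. First, the assertion that ``$f(v)\le 4$ forces each black vertex to have no external neighbor'' is neither a hypothesis of Lemma \ref{lem:strong-parts} nor actually true (e.g.\ in case~(1) with $t=1$ the root has $\deg_{H_i}(r)=1$, so $f(r)\le 4$ allows $\deg_H(r)\le 3$). Fortunately you do not need it: for any black $r$ one has $f^{H\setminus H_i}(r)=f(r)-|N_H(r)\setminus V(H_i)|\ge \deg_H(r)+1-(\deg_H(r)-\deg_{H_i}(r))=\deg_{H_i}(r)+1$ directly.

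The real gap is your treatment of condition (2), (FORB-1) for $f^{H_j}$. You propose to invoke Lemma \ref{lem:Gallai} with $u=v=v^\ast$, claiming the no-bad-Gallai-tree hypothesis ``carries over essentially verbatim'' from $f$ to $f^{H_j}$. It does not. The definition of ``bad'' requires ruling out subsets $U$ of size up to $2$, and passing from $f$ to $f^{H_j}$ can destroy exactly the slack that prevented badness. Concretely, take case~(1) with $t=2$: $H_i$ is the triangle on $r,v_1,v_2$, and suppose $v_1$ has a single external neighbor, lying in the part $H_j$ we remove. Then one can have $f^{H_j}(r)=3$, $f^{H_j}(v_1)=2$, $f^{H_j}(v_2)=3$, and with $U=\{r,v_2\}$ the triangle $T=H_i$ satisfies $(f^{H_j}-1_U)(w)=2=\deg_T(w)$ for every $w$; so $f^{H_j}$ \emph{is} bad on $T$ and Lemma \ref{lem:Gallai} does not apply. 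The paper sidesteps this entirely: it first checks $f^{H_j}(v)\ge 2$ for every $v\in V(H_i)$, colors $v^\ast$ with any color in $L(v^\ast)-c$, and then observes that every component of $H_i-v^\ast$ contains a vertex $w$ with $f^{H_j}(w)>\deg_{H_i}(w)$ (the root $r$ always works when $v^\ast\neq r$, and when $v^\ast=r$ at least one of $v_1,v_t$ retains slack since its guaranteed external neighbor lies in a part distinct from $H_j$), so Lemma \ref{lem:ERT} extends the coloring. This direct argument is both simpler and actually correct.
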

\begin{proof}
    We first show that (FORB-$2$) holds for each $f$-assignment on $H_i$. 
    To this end, let $L$ be an $f$-assignment on $H_i$.
    For all graphs $H_i$ outlined in Cases (1)-(3),
     we apply Lemma \ref{lem:fix}. We observe that the set $R$ of vertices $w$ for which $f(w) > \deg_{H_i}(w)$ form a dominating set.
     (In Figure \ref{fig:strong-parts}, the vertices of $R$ are those which are either drawn in black or incident to a half-edge.)

     We claim that each induced Gallai tree subgraph $T$ of $H_i$ 
    either has a  vertex $w \in V(T)$ satisfying $f(v) \geq \deg_T(w) + 2$ or three vertices satisfying $f(w) > \deg_{T}(w)$.
    Indeed, if $T$ is a single vertex $w$, then $f(w) \geq 2  =\deg_T(w) + 2$. If $T$ is a $K_2$, then case analysis shows that some $w \in V(T)$ satisfies $f(w) \geq 3 = \deg_T(w) + 2$. 
    If $T$ is a triangle, then checking each case shows that each $w \in V(T)$ satisfies $f(w) \geq 3 > \deg_T(w)$. 
    If $T$ is a $K_4$, then each vertex $w \in V(T)$ satisfies $f(w) = 4 > \deg_T(w)$.
    If $T$ is obtained by identifying the endpoint of a path $P$ with a vertex of a triangle $K$, then case analysis shows that the endpoint $p \in V(P) \setminus V(K)$ satisifes $f(p) \geq 2 > \deg_T(p)$, and two vertices $w \in V(K)$ satisfy $f(w) > \deg_T(w)$. 
    If $T$ is a path with at least three vertices, $f(w) > \deg_T(w)$ for each $w \in V(T)$.
    Therefore, for each induced Gallai tree subgraph $T$ of $H_i$, 
    $f$ is not bad on $T$.

    We note that each component of $H_i - u$ has a vertex $w$ satisfying $f(w) > \deg_{H_i}(w)$ for each $u \in V(H_i)$, and each component of $H_i \setminus \{u,v\}$
    has a vertex $w$ satisfying $f(w) > \deg_{H_i}(w)$ for each adjacent pair $u,v \in V(H_i)$. 
    Therefore, Lemma \ref{lem:fix} implies that
    $H_i$ is weakly $(f,4)$-reductive, 
    and hence 
    (FORB-$2$) holds.
    This argument also shows that (FIX') holds.

    Next, we show that for each part $H_j \in \mathcal H -H_i$ and $f^{H_j}$-assignment $L$ on $H_i$, (FORB-$1$) holds. To this end, fix an $f^{H_j}$-assignment $L$ on $H_i$. 
    We observe that in each case, for each vertex $v \in V(H_i), |L(v)| \geq 2$.
    Furthermore, for each $u \in V(H_i)$, each component of $H_i - u$ has a vertex $v$ satisfying $f^{H_j}(v) > \deg_{H_i}(v)$. Therefore, given a vertex $u \in V(H_i)$ and color $c \in L(u)$,
    we can give $H_i$ an $L$-coloring $\phi$ for which $\phi(u) \neq c$ by first coloring $u$ with a color from $L(u) -c$ and then using Lemma \ref{lem:ERT} to extend $\phi$ to all of $H_i$. Therefore, (FORB-$1$) holds.

    Finally, as $f(u) \geq \deg_{H}(u)$ for each $u \in V(H_i)$ (and hence $f^{H \setminus H_i}(u) \geq \deg_{H_i}(u)$) and there exists $v \in V(H_i)$ for which $f(v) > \deg_{H}(v)$ (and hence $f^{H \setminus H_i}(v) > \deg_{H_i}(v)$), Lemma \ref{lem:ERT} implies that $H_i$ is $f^{H \setminus H_i}$-choosable. This completes the proof.
\end{proof}

In our proof of the  main result, we repeatedly
use the strong parts in Lemma \ref{lem:strong-parts} along with Lemma \ref{lem:orig_partition} in order to identify reductive graphs.
The next lemmas describe weak parts that often appear in our applications of Lemma \ref{lem:orig_partition}.
We show the weak parts of Lemma \ref{lem:weak-parts} in Figure \ref{fig:weak-parts}.

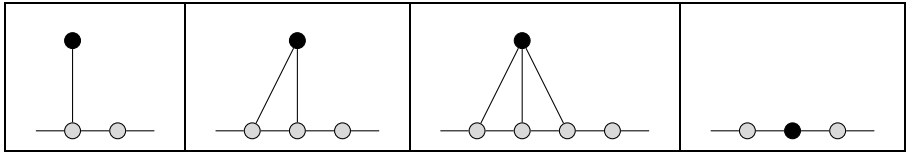
\begin{figure}
\begin{tabular}{ |c|c|c|c| } 
\hline
\begin{tikzpicture}
[scale=1.2,auto=left,every node/.style={circle,fill=gray!30,minimum size = 6pt,inner sep=0pt}]
\node(r1) at (0,1) [draw = black, fill = black] {};
\node(p1) at (0,0) [draw = black] {};
\node(p2) at (0.5,0) [draw = black] {};
\node(h1) at (-0.5,0) [draw=white,fill=white]  {};
\node(h2) at (1,0) [draw=white,fill=white] {};
\foreach \from/\to in {r1/p1,p1/p2,p1/h1,p2/h2}
    \draw (\from) -- (\to);
\end{tikzpicture} & \begin{tikzpicture}
[scale=1.2,auto=left,every node/.style={circle,fill=gray!30,minimum size = 6pt,inner sep=0pt}]
\node(r1) at (0,1) [draw = black, fill = black] {};
\node(z) at (0.5,1.4) [draw=white,fill=white,minimum size=0pt,inner sep=0pt] {};
\node(p1) at (-0.5,0) [draw = black] {};
\node(p3) at (0,0) [draw = black] {};
\node(p4) at (0.5,0) [draw = black] {};
\node(h1) at (-1,0) [draw=white,fill=white]  {};
\node(h2) at (1,0) [draw=white,fill=white]  {};
\foreach \from/\to in {r1/p1,r1/p3,p1/p3,p1/h1,p3/p4,p4/h2}
    \draw (\from) -- (\to);
\end{tikzpicture} & \begin{tikzpicture}
[scale=1.2,auto=left,every node/.style={circle,fill=gray!30,minimum size = 6pt,inner sep=0pt}]
\node(r1) at (0,1) [draw = black, fill = black] {};
\node(p1) at (-0.5,0) [draw = black] {};
\node(p2) at (0,0) [draw = black] {};
\node(p3) at (0.5,0) [draw = black] {};
\node(p4) at (1,0) [draw = black] {};
\node(h1) at (-1,0) [draw=white,fill=white] {};
\node(h2) at (1.5,0) [draw = white,fill=white] {};
\foreach \from/\to in {r1/p1,r1/p2,r1/p3,p1/p2,p2/p3,p3/p4,p4/h2,p1/h1}
    \draw (\from) -- (\to);
\end{tikzpicture} &
\begin{tikzpicture}
[scale=1.2,auto=left,every node/.style={circle,fill=gray!30,minimum size = 6pt,inner sep=0pt}]
\node(z) at (0.5,0) [draw=white,fill=white,minimum size=0pt,inner sep=0pt] {};
\node(p1) at (-0.5,0) [draw = black] {};
\node(p3) at (0,0) [draw = black, fill = black] {};
\node(p4) at (0.5,0) [draw = black] {};
\node(h1) at (-1,0) [draw=white,fill=white]  {};
\node(h2) at (1,0) [draw=white,fill=white]  {};
\foreach \from/\to in {p1/p3,p1/h1,p3/p4,p4/h2}
    \draw (\from) -- (\to);
\end{tikzpicture}
\\
\hline 
\end{tabular}

\caption{The figure shows the subgraphs of $H$ listed in Lemma \ref{lem:weak-parts}. 
The half-edges drawn in each image indicate that a vertex has a neighbor in $H$ that is not part of the subgraph.
The function $f$ in the lemma satisfies $f(r) \geq \deg_H(r) + 1$
for each vertex $r$ drawn in black and $f(v) \geq \deg_H(v)$ for each vertex $v$ drawn in grey.
}
\label{fig:weak-parts}
\end{figure}

\begin{lemma} 
\label{lem:weak-parts}
Let $H$ be a graph with a function $f:V(H) \rightarrow \mathbb Z$, and let $\mathcal H$ be a subgraph partition of $H$.
Then, a part $H_i \in \mathcal H$ is an $f$-weak part if 
one of the following holds: 
\begin{enumerate}
    \item \label{item:weak-petal} $H_i$ is obtained from a petal graph with a root $r$ and a path $P = (v_1, \dots, v_t)$ by deleting the edge $r v_t$, and the following properties are satisfied:
    \begin{itemize}
        \item $2 \leq t \leq 4$;
        \item $f(r) \geq \deg_H(r) + 1$;
        \item $f(v) \geq \deg_H(v)$ for each $v \in V(P)$.
        \item $v_1$ has a neighbor in a part $H_j \in \mathcal H - H_i$;
        \item $v_t$ has a neighbor in a part $H_{j'} \in \mathcal H \setminus \{H_i, H_j\}$.
    \end{itemize}
    \item \label{item:weak-path} $H_i$ is a path $P = (v_1,v_2,v_3)$ satisfying the following properties:
    \begin{itemize}
        \item $f(v_1) \geq \deg_H(v_1)$;
        \item $f(v_2) \geq \deg_H(v_2) + 1$;
        \item $f(v_3) \geq \deg_H(v_3)$;
        \item $v_1$ has a neighbor in a part $H_j \in \mathcal H - H_i$;
        \item $v_3$ has a neighbor in a part $H_{j'} \in \mathcal H \setminus \{H_i, H_j\}$.
    \end{itemize}
\end{enumerate}
\end{lemma}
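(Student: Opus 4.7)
The plan is to verify the two conditions of Definition~\ref{def:weak-part} for $H_i$ in each of the two cases, using Lemma~\ref{lem:fix} for condition~(1) and Lemma~\ref{lem:ERT} for condition~(2).

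For condition~(1), which demands (FIX') and (FORB-$2$) for every $f$-assignment on $H_i$, I will apply Lemma~\ref{lem:fix} to $H_i$ with $R \subseteq V(H_i)$ taken to be the set of vertices $v$ satisfying $f(v) > \deg_{H_i}(v)$. In Case~(1) this set contains $r$ (because $f(r) \geq \deg_H(r) + 1 \geq \deg_{H_i}(r) + 1$) together with $v_1$ and $v_t$ (each has an external neighbor, so $\deg_H(v) > \deg_{H_i}(v)$); in Case~(2) it contains $v_2$ together with $v_1$ and $v_3$ for the same reasons. In both cases $R$ dominates $H_i$, and a direct enumeration of connected $U \subseteq V(H_i)$ with $|U| \leq 2$ confirms the component hypothesis of Lemma~\ref{lem:fix}: every component of $H_i \setminus U$ contains one of the distinguished vertices in $R$.

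To verify the no-bad-Gallai-tree hypothesis, I will show that every induced Gallai tree $T \subseteq H_i$ has at least three vertices $v$ with $f(v) > \deg_T(v)$, which prevents any set $U$ of size at most~$2$ from witnessing $f$-badness on $T$. A vertex $v \in V(T)$ satisfies this strict inequality whenever $v \in R$, or whenever $\deg_T(v) < \deg_{H_i}(v)$ (since then $f(v) \geq \deg_{H_i}(v) > \deg_T(v)$). For $|V(T)| \geq 3$ the combination of $R$-membership and degree drop suffices in each instance; for $|V(T)| \leq 2$, one checks directly using the lower bounds on $f$ that no two adjacent vertices of $H_i$ simultaneously attain $f$-value $2$, which rules out badness by hand. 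This yields weak $(f,4)$-reductivity of $H_i$ via Lemma~\ref{lem:fix}, and in particular (FIX') and (FORB-$2$).

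For condition~(2), fix any two parts $H_j, H_{j'} \in \mathcal{H} - H_i$ and write $g = f^{H_j \cup H_{j'}}$. The bound $|N_H(v) \cap (V(H_j) \cup V(H_{j'}))| \leq \deg_H(v) - \deg_{H_i}(v)$ gives $g(v) \geq f(v) - (\deg_H(v) - \deg_{H_i}(v))$, so combined with the case hypotheses this yields $g(v) \geq \deg_{H_i}(v)$ at every vertex of $H_i$, with strict inequality at $r$ in Case~(1) and at $v_2$ in Case~(2). Since $H_i$ is connected in every case, Lemma~\ref{lem:ERT} then immediately gives that $H_i$ is $g$-choosable, completing condition~(2).

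The main obstacle will be the bookkeeping required for the no-bad-Gallai-tree hypothesis in Case~(1) with $t = 4$, where $H_i$ has five vertices and several induced Gallai subtrees to inspect (paths of each short length, two triangles, and a triangle-plus-pendant on four vertices). The structural claim above reduces this to a short case check that remains the only tedious part of the proof.
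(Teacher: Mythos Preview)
Your proposal is correct, and for condition~(2) of Definition~\ref{def:weak-part} it matches the paper's argument exactly: both of you reduce $f^{H_j \cup H_{j'}}$-choosability of $H_i$ to Lemma~\ref{lem:ERT} via the inequality $f^{H_j \cup H_{j'}}(v) \geq \deg_{H_i}(v)$ with strict inequality at the root vertex.

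For condition~(1), however, you take a genuinely different route. You invoke Lemma~\ref{lem:fix} with the dominating set $R = \{v \in V(H_i): f(v) > \deg_{H_i}(v)\}$ and then discharge the no-bad-Gallai-tree hypothesis by a case enumeration. The paper instead bypasses Lemma~\ref{lem:fix} entirely and argues (FIX') and (FORB-$2$) directly from Lemma~\ref{lem:ERT}: for (FIX'), fix a color at $u$ and observe that every component of $H_i - u$ contains some $v$ with $f(v) > \deg_{H_i}(v)$; for (FORB-$2$), note that for any adjacent pair $u,v$ at least one of them has $f$-value at least $3$, so both can be precolored avoiding $c$, and then every component of $H_i \setminus \{u,v\}$ again contains a vertex with strict slack. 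The paper's approach is shorter precisely because it never needs to inventory induced Gallai subtrees of $H_i$; your approach is more mechanical but pays for that with the bookkeeping you flag in the $t=4$ case. Both are valid.
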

\begin{proof}
    First, we show that (FIX') holds for every $f$-assignment $L$ on $H_i$. For this, 
    we observe that for each $v \in V(H_i)$, $f(v) \geq \deg_H(v) \geq \deg_{H_i} (v)$. Also,
    for each $u \in V(H_i)$, each component of $H_i - u$ has a vertex $v$ satisfying $f(v) > \deg_{H_i}(v)$. Hence, by Lemma \ref{lem:ERT}, $H_i$ can be $L$-colored even after fixing an arbitrary color $c \in L(u)$ at $u$; therefore, (FIX') holds.

    Next, we show that (FORB-$2$) holds for every $f$-assignment $L$ on $H_i$. We fix two vertices $u, v \in V(H_i)$ and a color $c \in L(u) \cup L(v)$. In each of our lemma's cases,
    if $u$ and $v$ are adjacent, then without loss of generality $f(u) \geq 3$; 
    therefore, even after forbidding $c$ at $u$ and $v$, 
    $u$ and $v$ can be $L$-colored. Then, as each component of $H_i \setminus \{u,v\}$
    has a vertex $w$ satisfying $f(w) > \deg_{H_i}(w)$, Lemma \ref{lem:ERT} implies that we can finish our $L$-coloring of $H_i$.

    Finally, we argue that for each $H_j,H_{j'} \in \mathcal H -H_i$,
    $H_i$ is $f^{H_j \cup H_{j'}}$-choosable. Since $f(v) \geq \deg_{H}(v)$ for all $v \in V(H_i)$ and $f(u) > \deg_H(u)$ for some $u \in V(H_i)$, 
    it follows that $f^{H_j \cup H_{j'}}(v) \geq \deg_{H_i}(v)$ for all $v \in V(H_i)$ and 
    $f^{H_j \cup H_{j'}}(u) > \deg_{H_i}(u)$ for some $u \in V(H_i)$.
    Therefore,
    $H_i$ is $f^{H_j \cup H_{j'}}$-choosable by Lemma \ref{lem:ERT}. 
\end{proof}

\begin{lemma}
\label{lem:deg-5-weak-part}
    Let $H$ be a graph with a function $f \colon V(H) \rightarrow \mathbb{Z}$, and let $\mathcal{H}$ be a subgraph partition of $H$. A part $H_i \in \mathcal{H}$ is an $f$-weak part if the following holds:
    \begin{itemize}
        \item $H_i$ is obtained from a star $K_{1, t}$ with center $x$ and a set $R$ of $t \in \{0,1,2,3\}$ leaves by possibly adding edges between distinct leaves;


        \item $f(x) \geq \deg_H(x) - 1$;

        \item $f(r) \geq \deg_{H}(r) + 1$ for each $r \in R$;
        \item $x$ has a neighbor in at least $5 - t$ distinct parts of $\mathcal{H} - H_i$. 
    \end{itemize}
\end{lemma}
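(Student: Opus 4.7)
The plan is to verify directly the two conditions of Definition \ref{def:weak-part}. Note first that $\deg_H(x) \geq t + (5-t) = 5$, so $f(x) \geq 4$, while $f(r) \geq \deg_H(r) + 1 \geq \deg_{H_i}(r) + 1 \geq 2$ for each $r \in R$. In particular, $f(v) \geq \deg_{H_i}(v) + 1$ for every $v \in V(H_i)$, since $\deg_{H_i}(x) = t \leq 3$. When $t = 0$, $H_i$ is a single vertex and both conditions are immediate; in what follows I assume $t \geq 1$.

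For condition (1) (that is, (FIX') and (FORB-2) for every $f$-assignment on $H_i$), I would apply Lemma \ref{lem:fix} with dominating set $R' = V(H_i)$. Since $R' = V(H_i)$, the first structural hypothesis of Lemma \ref{lem:fix} holds trivially. For the Gallai-tree hypothesis, I would enumerate the induced Gallai-tree subgraphs $T$ of $H_i$: a single vertex, an edge, a $P_3$, a triangle, a $K_{1,3}$, a paw, or $K_4$ (noting that $K_4 - e$ is a theta graph and hence not a Gallai tree). In each case, a routine calculation using $f(x) \geq 4$ and $f(r) \geq \deg_{H_i}(r) + 1$ shows that no set $U \subseteq V(T)$ of size at most $2$ can satisfy $(f - 1_U)(w) = \deg_T(w)$ for every $w \in V(T)$; the recurring reason is that $f(x) \geq 4$ forces $x \in U$ whenever $x \in V(T)$, and then the remaining slot in $U$ is not enough to absorb the multiple leaves whose $f$-values strictly exceed their $\deg_T$-values. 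Lemma \ref{lem:fix} then yields the weak $(f,4)$-reductivity of $H_i$, establishing condition (1).

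For condition (2), fix $H_j, H_{j'} \in \mathcal H - H_i$. The key estimate is $f^{H_j \cup H_{j'}}(x) \geq 2$. Writing $n_l$ for the number of neighbors of $x$ in $H_l$ and $n_{out}$ for the total number of neighbors of $x$ outside $H_i$, the hypothesis that $x$ has a neighbor in at least $5-t$ distinct parts of $\mathcal H - H_i$ ensures that at least $\max\{0, 3-t\}$ such parts avoid $\{H_j, H_{j'}\}$, so $n_{out} - n_j - n_{j'} \geq \max\{0, 3-t\}$. Combining with $f(x) \geq \deg_H(x) - 1 = t + n_{out} - 1$,
\[f^{H_j \cup H_{j'}}(x) = f(x) - n_j - n_{j'} \geq t - 1 + \max\{0, 3-t\} \geq 2.\]
Also, $f^{H_j \cup H_{j'}}(r) \geq \deg_H(r) + 1 - (\deg_H(r) - \deg_{H_i}(r)) = \deg_{H_i}(r) + 1$ for each $r \in R$. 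Given any $f^{H_j \cup H_{j'}}$-assignment $L$, I would color $x$ first with an arbitrary color from $L(x)$ (nonempty as $|L(x)| \geq 2$); each $r \in R$ then retains at least $\deg_{H_i}(r) \geq \deg_{H_i[R]}(r) + 1$ available colors, so Lemma \ref{lem:ERT} finishes the coloring on $H_i[R]$.

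The main obstacle is the case $t = 3$ of condition (2): here $\deg_{H_i}(x) = 3$ while $f^{H_j \cup H_{j'}}(x)$ may be only $2$, preventing a direct appeal to Lemma \ref{lem:ERT} on all of $H_i$ at once. The ``color $x$ first'' step rescues the argument, since after fixing $\phi(x)$ each leaf $r$ has strictly more remaining colors than its degree in $H_i[R]$, owing to the extra $+1$ in the bound $f(r) \geq \deg_H(r) + 1$.
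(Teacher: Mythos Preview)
Your proposal is correct. For condition (2) of Definition \ref{def:weak-part}, your argument coincides with the paper's: both derive $f^{H_j \cup H_{j'}}(x) \geq 2$ from the hypothesis that $x$ meets at least $5-t$ distinct parts of $\mathcal H - H_i$, then color $x$ first and extend greedily to $R$ using $f^{H_j \cup H_{j'}}(r) \geq \deg_{H_i}(r)+1$.

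For condition (1), the paper takes a shorter direct route rather than invoking Lemma \ref{lem:fix}. It verifies (FIX') by coloring the specified vertex, then $x$ (which retains at least three colors), then greedily extending to the remaining leaves; and it verifies (FORB-2) by deleting $c$ from $L(u)$ and $L(v)$ and applying Lemma \ref{lem:ERT} to $H_i$, splitting into the cases $t \leq 2$ (where $|L'(x)| \geq 3 > \deg_{H_i}(x)$) and $t = 3$ (where some leaf $r_0 \notin \{u,v\}$ still has $|L'(r_0)| \geq \deg_{H_i}(r_0)+1$). Your route through Lemma \ref{lem:fix} with $R' = V(H_i)$ is legitimate because indeed $f(v) \geq \deg_{H_i}(v)+1$ for every $v \in V(H_i)$, which trivializes the structural hypothesis; the Gallai-tree enumeration then goes through since $f(x) \geq 4$ forces $x \in U$ whenever $x \in V(T)$, leaving too few slots in $U$ to accommodate the remaining vertices whose $f$-values strictly exceed their $T$-degrees. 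The paper's ad hoc verification is a bit quicker in this small case, while your approach is more systematic and reuses the existing machinery.
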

\begin{proof}
    We begin by observing that $\deg_H(x) \geq 5$, so $f(x) \geq 4$. Now, we show that (FIX') holds for each $f$-assignment $L$ on $H_i$. 
    For each $u \in V(H_i)$ and $c \in L(u)$, we may fix $c$ at $u$. Then, if $u \neq x$, we may color $x$ with an available color in $L(x)$, as $L(x)$ has at least $3$ available colors. Subsequently, because 
    $f(r) \geq \deg_H(r) + 1 \geq \deg_{H_i}(r) + 1$ 
    for each $r \in R - u$, Lemma $\ref{lem:ERT}$ allows us to greedily extend our $L$-coloring to the remainder of $H_i$, proving that (FIX') holds.

    Next, we show that (FORB-2) holds for each $f$-assignment $L$ on $H_i$. Fix $u, v \in V(H_i)$ and $c \in L(u) \cup L(v)$. Let $L'(u) = L(u) - c$, $L'(v) = L(v) - c$, and $L'(w) = L(w)$ for each $w \in V(H_i) \setminus \{u, v\}$. We consider two cases: $t \leq 2$ and $t = 3$. 
    If $t \leq 2$, then $|L'(x)| \geq f(x) - 1 \geq 3 \geq t + 1 = \deg_{H_i}(x) + 1$. Additionally, $|L'(r)| \geq f(r) - 1 \geq \deg_H(r) \geq \deg_{H_i}(r)$ for each $r \in R$. Hence, by Lemma \ref{lem:ERT}, there exists an $L'$-coloring of $H_i$. Now suppose that $t = 3$. 
    Then, there exists $r_0 \in R$ such that $L'(r_0) = L(r_0)$, which means that $|L'(r_0)| = f(r_0) \geq \deg_H(r_0) + 1 \geq \deg_{H_i}(r_0) + 1$. Additionally, $|L'(x)| \geq f(x) - 1 \geq 3 = \deg_{H_i}(x)$, and for each $r \in R$, $|L'(r)| \geq f(r) - 1 \geq \deg_H(r) \geq \deg_{H_i}(r)$. Thus, by Lemma \ref{lem:ERT}, there exists and $L'$-coloring of $H_i$.

    Finally, we show that for all $H_j, H_{j'} \in \mathcal{H} - H_i$ and $f^{H_j \cup H_{j'}}$-assignments $L$ on $H_i$, $H_i$ is $L$-colorable. Since $x$ has a neighbor in at least $5 - t$ distinct parts of $\mathcal{H} - H_i$, we have $\deg_H(x) \geq (3 - t) + |N_H(x) \cap V(H_j \cup H_{j'})| + t =  3 + |N_H(x) \cap V(H_j \cup H_{j'})| $. Therefore,
    \begin{align*}
        f^{H_j \cup H_{j'}}(x) &\geq \deg_H(x) - 1 - |N_H(x) \cap V(H_j \cup H_{j'})| \geq 2. 
    \end{align*}
    Additionally, for each $r \in R$, we have $f^{H_j \cup H_{j'}}(r) \geq \deg_H(r) - |N_H(r) \cap V(H_j \cup H_{j'})| + 1 \geq \deg_{H_i}(r) + 1$. Hence, we may produce and $L$-coloring of $H_i$ by first coloring $x$ and then coloring each $r \in R$.
\end{proof}

\section{Structure of a minimal counterexample to Theorem \ref{thm:mad113}}
\label{sec:reducible}
In this section, we fix a minimal counterexample to Theorem \ref{thm:mad113} and establish some properties of this counterexample. 
For the rest of the paper, we fix the constant $\xi = 2^{-48}$.
We use the key observation that if Theorem \ref{thm:mad113} is false, then some graph $G$ of maximum average degree less than $11/3$ is free of induced $(4,\xi)$-reducible subgraphs. Indeed, suppose that every graph of maximum average degree less than $11/3$ has an induced $(4,\xi)$-reducible subgraph. Then,
given some $G$ of maximum average degree less than $11/3$,
as each induced subgraph of $G$ also has maximum average degree less than $11/3$, each induced subgraph of $G$ also
has an induced $(4,\xi)$-reducible subgraph. Then, Lemma \ref{lem:main-k-red}
implies that $G$ is weighted $\epsilon$-flexibly $4$-choosable for $\epsilon = \frac{2 \xi^3}{4} = 2^{-145}$ and thus is not a counterexample to Theorem \ref{thm:mad113}.
Therefore, the negation of Theorem \ref{thm:mad113} implies the existence of a graph $G$ with maximum average degree less than $11/3$ and no induced $(4,\xi)$-reducible subgraph. Using the ideas of Lemma \ref{lem:main-k-red}, it is easy to see that every minimal counterexample to Theorem \ref{thm:mad113} is free of induced $(4,\xi)$-reducible subgraphs.

Based on the discussion above, we fix a graph $G$ with no induced $(4,\xi)$-reducible subgraph, and we 
deduce some structural properties of $G$. If $H$ is a weakly $4$-reducible subgraph of $G$, then we simply write that $H$ is \emph{weakly reducible}.
Given an induced subgraph $H$ of $G$, we define the function $\ell_H(v) = \ell_{H,4}(v) = 4 - \deg_G(v) + \deg_H(v)$ for each $v \in V(H)$.
We observe that $\ell_H(v) \geq \deg_H(v)$ for each $4$-vertex $v \in V(H)$ and $\ell_H(r) \geq \deg_H(r) + 1$ for each $3$-vertex $r \in V(H)$. (Recall that $v$ is a \emph{$d$-vertex} if $\deg_G(v) = d$.)

If $K \subseteq G$ is a subgraph isomorphic to $K_4$, then we say that $K$ is a \emph{special $K_4$} if $K$ contains two $3$-vertices and two $4$-vertices. We say that the edge $e \in E(K)$ joining the two $4$-vertices in $K$ is an \emph{insulated edge}. We say that every edge in $G$ that is not insulated is \emph{conductive}.

We say that a $4$-vertex $v \in V(G)$ is \emph{stressed} if $v$ has exactly two $3$-neighbors. We say that a $4$-vertex $v \in V(G)$ is \emph{conductive} if 
$v$ has exactly one $3$-neighbor.
We say that a \emph{conductive path} is a path whose internal vertices and edges 
are all conductive. We say that two vertices $u,v \in V(G)$ are \emph{conductively connected} if there exists a conductive path with endpoints $u$ and $v$.
We say that a \emph{conductive triangle} is a $K_3$ subgraph whose vertices are all conductive.
If $v \in V(G)$ is a $4$-vertex with no $3$-neighbor or if $v$ is a $5^+$-vertex, then we say that $v$ is
\emph{insulated}.

\subsection{Local properties of a minimal counterexample}

We establish some local properties of our graph $G$. These properties will later help us establish some global structural properties of $G$, which will ultimately allow us to complete our discharging argument.
In order to give clearer intuition for the upcoming lemmas, we briefly summarize our discharging argument in Section \ref{sec:discharging}. We give each vertex a charge, with vertices of higher degree receiving more charge than vertices of lower degree. Then, we show that after redistributing charge according to certain rules,
each vertex of $G$ has nonnegative charge, implying that $G$ has maximum average degree at least $11/3$, a contradiction. In order to carry out our discharging argument, we need to pay special attention to vertices of small degree (which initially receive the least charge), especially those which are near other vertices of small degree.
In particular, we pay special attention to stressed vertices.

\begin{lemma}
\label{lem:no-weak}
    $G$ has no weakly reducible subgraph on at most $24$ vertices.
\end{lemma}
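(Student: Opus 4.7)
The plan is to obtain the lemma as an immediate consequence of Lemma \ref{lem:weak-to-strong} together with the defining property of $G$ as a minimum counterexample, namely that $G$ contains no induced $(4,\xi)$-reducible subgraph for $\xi = 2^{-48}$.

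Concretely, I would argue by contradiction. Suppose $H$ is an induced subgraph of $G$ with $|V(H)| \leq 24$ such that $H$ is a weakly $4$-reducible subgraph of $G$; that is, $H$ is weakly $(\ell_{H,4}, 4)$-reductive. I would first verify the hypothesis of Lemma \ref{lem:weak-to-strong} for the function $f = \ell_{H,4}$: since $H$ is an induced subgraph of $G$, we have $\deg_H(v) \leq \deg_G(v)$, and therefore
\[
\ell_{H,4}(v) = 4 - \deg_G(v) + \deg_H(v) \leq 4
\]
for every $v \in V(H)$, so $\ell_{H,4}$ is bounded above by $k = 4$ as required.

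Applying Lemma \ref{lem:weak-to-strong} with $k = 4$ and $b = 24$, the graph $H$ is $(\ell_{H,4}, 4, \alpha)$-reductive with $\alpha = 4^{-24} = 2^{-48} = \xi$. By the definition of a $(k,\alpha)$-reducible subgraph, this means $H$ is a $(4, \xi)$-reducible subgraph of $G$. But $G$ was chosen precisely so that no induced subgraph is $(4,\xi)$-reducible, a contradiction. Hence no such $H$ exists.

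There is no real obstacle here: the entire content of the lemma is bookkeeping to confirm that the constants $\xi = 2^{-48}$ and the threshold $24$ match, via the identity $4^{-24} = 2^{-48}$. The reason $24$ appears as the size bound in the statement is exactly so that applying Lemma \ref{lem:weak-to-strong} produces the constant $\xi$ that was fixed at the start of Section \ref{sec:reducible}, which in turn was chosen so that the resulting $\epsilon = 2\xi^3/4 = 2^{-145}$ in Lemma \ref{lem:main-k-red} matches the bound claimed in Theorem \ref{thm:mad113}.
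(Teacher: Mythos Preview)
Your proof is correct and follows essentially the same approach as the paper: both argue by contradiction, apply Lemma~\ref{lem:weak-to-strong} with $k=4$ and $b=24$, and use the identity $4^{-24}=2^{-48}=\xi$ to contradict the defining property of $G$. Your version is slightly more explicit in verifying the hypothesis $\ell_{H,4}(v)\leq 4$ needed for Lemma~\ref{lem:weak-to-strong}, which the paper leaves implicit.
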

\begin{proof}
    Suppose that $G$ has a weakly reducible subgraph $H$ on at most $24$ vertices. Then, by definition, $H$ is weakly $(\ell_{H,4}, 4)$-reductive. Then, Lemma \ref{lem:weak-to-strong} tells us that $H$ is $(\ell_{H,4}, 4, 4^{-24})$-reductive. As $\xi = 2^{-48} = 4^{-24}$, $H$ is $(4, \xi)$-reducible, a contradiction.
\end{proof}

\begin{lemma}
\label{lem:mindeg3}
    The minimum degree of $G$ is at least $3$.
\end{lemma}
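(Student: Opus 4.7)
The plan is to argue by contradiction using Lemma \ref{lem:no-weak}: if $G$ contained a vertex $v$ of degree at most $2$, then the single-vertex subgraph $H = G[\{v\}]$ would be weakly reducible, giving a subgraph on $1 \leq 24$ vertices that cannot exist.

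Concretely, suppose for contradiction that some $v \in V(G)$ has $\deg_G(v) \leq 2$. Set $H = G[\{v\}]$, so that
\[
\ell_H(v) \;=\; 4 - \deg_G(v) + \deg_H(v) \;=\; 4 - \deg_G(v) \;\geq\; 2.
\]
I would then verify directly that $H$ is weakly $(\ell_H, 4)$-reductive. Let $L$ be any $\ell_H$-assignment on $H$, so $|L(v)| \geq 2$. For (FIX'), for any color $c \in L(v)$ the coloring $\phi(v) = c$ is a valid $L$-coloring of the single-vertex graph $H$. For (FORB-$2$), the only $2$-tuple of vertices in $H$ is $(v,v)$, and for any color $c$ we may pick $\phi(v) \in L(v) - c$, which is nonempty since $|L(v)| \geq 2$. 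Hence $H$ satisfies both conditions, so $H$ is weakly $4$-reducible.

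Since $|V(H)| = 1 \leq 24$, this contradicts Lemma \ref{lem:no-weak}, which forbids weakly reducible subgraphs on at most $24$ vertices. Therefore every vertex of $G$ has degree at least $3$.

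There is no real obstacle here: the statement is the easiest of the local reducibility observations, and the argument is essentially just checking that a vertex with a list of size $\geq 2$ can be colored avoiding any single forbidden color. The only subtlety worth flagging is the convention that in (FORB-$2$) the two chosen vertices need not be distinct, which is what makes the one-vertex case nontrivially cover the "forbid one color" requirement.
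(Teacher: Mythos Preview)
Your proof is correct and follows essentially the same approach as the paper: take $H$ to be the single vertex $v$, observe $\ell_H(v) \geq 2$, and invoke Lemma~\ref{lem:no-weak}. You have simply made explicit the verification of (FIX') and (FORB-$2$) that the paper leaves implicit.
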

\begin{proof}
    If $G$ has a $2^-$-vertex $v$, then we consider the subgraph $H=v$. We observe that $\ell_H(v) \geq 2$, and 
    hence $H$ is a weakly reducible subgraph of $G$, contradicting Lemma \ref{lem:no-weak}.
\end{proof}

\begin{lemma}
\label{lem:d-2}
    If $v \in V(G)$ is a vertex of degree $d \in \{3,4,5\}$, then $v$ has at most $(d-2)$ $3$-neighbors.
\end{lemma}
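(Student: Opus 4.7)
I would argue by contradiction. Suppose $v$ has degree $d \in \{3,4,5\}$ and at least $d-1$ three-neighbors. Fix any $d-1$ of these three-neighbors $u_1,\dots,u_{d-1}$ and let $H = G[\{v,u_1,\dots,u_{d-1}\}]$, which has at most $5 \leq 24$ vertices. The plan is to show $H$ is weakly reducible, contradicting Lemma \ref{lem:no-weak}. First I would compute $\ell_H$: since every $u_i$ is a $3$-vertex adjacent to $v$, we get $\ell_H(u_i) = 1 + \deg_H(u_i) \geq \deg_H(u_i) + 1$, while $\ell_H(v) = 4 - d + (d-1) = 3$ and $\deg_H(v) = d-1$.

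For the cases $d \in \{3,4\}$, I would apply Lemma \ref{lem:fix}, taking $R = \{u_1,\dots,u_{d-1}\}$ (with $R = V(H)$ when $d=3$). Then $R$ is a dominating set because $v$ is adjacent to every $u_i$, and the list-size hypothesis of the lemma is satisfied. Condition (1) is routine: removing any $1$ or $2$ vertices from $H$ leaves each component with some $u_i \in R$. Condition (2) requires ruling out bad induced Gallai trees; since the only possibilities are single vertices, edges, paths on at most three vertices, triangles, and (when $d=4$) $K_4$, in each case the inequality $\ell_H(v) = 3$ combined with $\ell_H(u_i) = 1 + \deg_H(u_i)$ forces any candidate bad set $U$ to have size at least $3$, so no bad Gallai tree exists.

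The case $d=5$ is the main obstacle because $\ell_H(v) = 3 < 4 = \deg_H(v)$, so Lemma \ref{lem:fix} is inapplicable. Here I would verify (FIX') and (FORB-$2$) directly, exploiting the fact that every $u_i$ has an extra color beyond its $H$-degree. For (FIX') with a color $c$ fixed at $v$: each $u_i$ then has list size $\geq 1 + \deg_H(u_i) - 1 = \deg_{H-v}(u_i) + 1$, so Lemma \ref{lem:ERT} colors $H - v$. For (FIX') with $c$ fixed at some $u_1$: color $v$ next with any of the $\geq 2$ colors in $L(v) - c$, and then each remaining $u_j$ has list size exceeding its degree in $H \setminus \{v,u_1\}$, so Lemma \ref{lem:ERT} finishes. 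For (FORB-$2$) at a pair $\{x,y\}$ with forbidden color $c$: color $x,y$ first avoiding $c$ (using pairwise-differing colors from lists of size $\geq 2$, and lists of size $\geq 3$ when $x,y$ are adjacent $u_i$'s), next color $v$ (if not already colored) using $|L(v)| = 3$ minus at most two colors, and finally apply Lemma \ref{lem:ERT} to the remaining $u_j$'s, each of which has list size strictly greater than its residual degree.

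The critical insight for $d=5$ is that coloring $v$ immediately after any fixed $u_i$ absorbs the one-color deficit at $v$ into the surplus flexibility at the remaining $u_j$'s; all downstream coloring then follows from Lemma \ref{lem:ERT} because the residual list sizes strictly dominate the residual degrees. Once weak reducibility of $H$ is established in all three cases, Lemma \ref{lem:no-weak} is violated and we conclude $v$ has at most $d-2$ three-neighbors.
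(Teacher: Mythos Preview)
Your proposal is correct and follows essentially the same approach as the paper: take $H$ induced by $v$ together with its $3$-neighbors, show $H$ is weakly reducible, and invoke Lemma~\ref{lem:no-weak}. The paper's execution is more streamlined---it takes \emph{all} $3$-neighbors of $v$ and verifies (FIX') and (FORB-$2$) directly and uniformly for every $d\in\{3,4,5\}$ (color the distinguished vertex or pair first, then $v$, then greedily finish on the $3$-vertices using $\ell_H(r)>\deg_H(r)$), whereas you split off the cases $d\in\{3,4\}$ via Lemma~\ref{lem:fix} and only argue directly for $d=5$; both routes arrive at the same contradiction.
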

\begin{proof}
Suppose that $G$ has a vertex $v$ of degree $d \in \{3,4,5\}$ with at least $d-1$ neighbors of degree $3$.
We let $H$ be the subgraph of $G$ induced by $v$ along with all of its $3$-neighbors, and we note that $|V(H)| \leq 6$.
We observe that $\ell_H(r) > \deg_H(r)$ for each $3$-vertex  $r \in V(H)$ and that $\ell_H(v) \geq 3$. 

We consider an $\ell_H$-assignment $L$ on $H$.
For each $u \in V(H)$ and  $c \in L(u)$,
we can construct an $L$-coloring $\phi$ of $H$ for which $\phi(u) = c$ by first coloring $u$, then coloring $v$ whenever $u\neq v$, and then greedily extending $\phi$ to all uncolored $3$-vertices. Therefore (FIX') holds. Furthermore, for each pair $u_1, u_2 \in V(H)$ and color $c \in L(u_1) \cup L(u_2)$, 
we can construct an $L$-coloring $\phi$ of $H$ for which $c \not \in \{\phi(u_1),\phi(u_2)\}$ by first coloring $u_1$ and $u_2$, then coloring $v$ whenever $v \not \in \{u_1,u_2\}$, and then greedily extending $\phi$ to all $3$-vertices of $H$. Therefore, (FORB-$2$) holds. 

Thus, $H$ is weakly reducible, contradicting Lemma \ref{lem:no-weak}.
\end{proof}

\begin{lemma}
\label{lem:stressed-special-K4}
    Let $u, v \in V(G)$ be stressed vertices, and suppose that $N(u) \cap N(v)$ contains two distinct $3$-vertices $r_1$ and $r_2$. Then, $G[\{u,v,r_1,r_2\}]$ is a special $K_4$.
\end{lemma}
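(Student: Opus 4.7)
Since $u$ is a stressed $4$-vertex, its exactly two $3$-neighbors must be $r_1$ and $r_2$; the same holds for $v$. Hence $H := G[\{u,v,r_1,r_2\}]$ automatically contains the four edges $ur_1, ur_2, vr_1, vr_2$, and $H$ is determined by whether each of the two potential ``diagonals'' $uv$ and $r_1 r_2$ lies in $E(G)$. Only the case where both diagonals are present yields a special $K_4$, so the plan is to assume at least one diagonal is missing and, in each of the three resulting configurations (a $4$-cycle through $u, r_1, v, r_2$, or one of two ``diamonds'' formed by adding exactly one diagonal), show that $H$ is a weakly reducible subgraph of $G$ of order $4 \leq 24$, contradicting Lemma~\ref{lem:no-weak}.

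In each non-$K_4$ case I would compute $\ell_H(w) = 4 - \deg_G(w) + \deg_H(w)$. Using $\deg_G(u) = \deg_G(v) = 4$ and $\deg_G(r_1) = \deg_G(r_2) = 3$, the resulting values always satisfy $\ell_H(u) = \deg_H(u)$, $\ell_H(v) = \deg_H(v)$, and $\ell_H(r_i) = \deg_H(r_i) + 1$. Taking $R := \{r_1, r_2\}$ (which dominates $H$, since each $r_i$ is adjacent to both $u$ and $v$), the degree hypothesis of Lemma~\ref{lem:fix} holds automatically. The two remaining hypotheses are then checked by a short enumeration: for the connectivity condition, since $|R| = 2$ and $|V(H) \setminus U| \geq 2$, at least one $r_i$ survives in every component of $H \setminus U$ whenever $|U| \leq 2$; and for the Gallai-tree condition, each induced Gallai-tree subgraph of $H$ has at most three vertices (singletons, edges, at most one triangle, and a few induced $P_3$'s), and the inequalities $\ell_H(r_i) \geq 3$ together with $\ell_H(u), \ell_H(v) \geq 2$ force the ``bad'' equations $(\ell_H - 1_U)(w) = \deg_T(w)$ to require $|U| \geq 3$, which is impossible. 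So Lemma~\ref{lem:fix} yields that $H$ is weakly $(\ell_H, 4)$-reductive, contradicting Lemma~\ref{lem:no-weak}.

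The main obstacle is the bookkeeping across the three non-$K_4$ configurations, each with its own list of induced Gallai trees and its own non-badness check; no single verification is deep. It is worth noting that the same attempted application of Lemma~\ref{lem:fix} would \emph{fail} in the $K_4$ case itself---for instance, $L(u)=L(v)=\{1,2,3\}$ and $L(r_1)=L(r_2)=\{1,2,3,4\}$ with color $4$ forbidden at both $r_1$ and $r_2$ forces $\phi(u) = \phi(v)$ on the edge $uv$---which is consistent with, and indeed forces, the conclusion that the special $K_4$ case is what actually occurs.
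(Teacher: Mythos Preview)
Your proposal is correct and follows essentially the same route as the paper's own proof: assume $H$ is not a clique (so $H$ is the $4$-cycle or a diamond), take $R=\{r_1,r_2\}$ as the dominating set, verify the two hypotheses of Lemma~\ref{lem:fix} by the short enumeration you describe, and derive a contradiction to Lemma~\ref{lem:no-weak}. One small point to tighten: your connectivity check ``at least one $r_i$ survives in every component of $H\setminus U$ whenever $|U|\le 2$'' is literally false for $U=\{r_1,r_2\}$ in the diamond with diagonal $r_1r_2$; there you need the escape clause ``$|U|=2$ and $U\subseteq R$'' in condition~(1) of Lemma~\ref{lem:fix}, which the paper also uses implicitly.
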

\begin{proof}
    Let $H = G[\{u,v,r_1,r_2\}]$. As $(u,r_1,v,r_2)$ forms a cycle in $G$, $H$ is $2$-connected. 
    We observe that $\ell_H(r_i) = \deg_H(r_i) + 1$ for $i \in \{1,2\}$, and that $\{r_1,r_2\}$ is a dominating set of $H$.
    Suppose that $H$ is not a clique; then, $H$ is a $4$-cycle or a diamond. 

    We claim that if $T$ is an induced
    Gallai tree subgraph of $H$, then 
    $\ell_H$ is not bad on $T$. Indeed, if $T$ consists of a single vertex $w$, then $\ell_H(w) \geq 2 = \deg_T(w) + 2$. If $T$ is a $K_2$, then some $w \in V(T)$ satisfies $\ell_H(w) = 3 = \deg_T(w) + 2$. If $T$ is a triangle, then each $w \in V(T)$ satisfies $\ell_H(w) = 3 > \deg_T(w)$ or there exists $w \in V(T)$ such that $\ell_H(w) = 4 \geq \deg_T(w) + 2$. If $T$ is a path with three vertices, then either an endpoint $w \in V(T)$ satisfies $\ell_H(w) = 3 = \deg_T(w) + 2$, or  $\ell_H(w) = \deg_T(w) + 1$ for each $w \in V(T)$. This exhausts all cases and shows that $\ell_H$ is not bad on $T$.
    
     Furthermore,
    for each adjacent
    pair $w,x \in V(H)$ other than $r_1, r_2$, each component of $H \setminus \{w,x\}$ contains $r_1$ or $r_2$. Hence, Lemma \ref{lem:fix} 
    implies that $H$ is weakly $(\ell_H,4)$-reductive and hence weakly reducible, contradicting Lemma \ref{lem:no-weak}. 
    Therefore, $H$ is a clique, and hence $H$ is a special $K_4$. 
\end{proof}

\begin{lemma}
\label{lem:far_neighbors}
Let $x$ be a $3$-vertex
with two $4$-neighbors $u$ and $v$. If $u$ and $v$ are joined by an induced conductive path $P$, then every vertex $p \in V(P)$ is adjacent to $x$.
\end{lemma}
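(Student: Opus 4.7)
The plan is to argue by contradiction: assume some $p \in V(P)$ is not adjacent to $x$, and construct an induced $(4,\xi)$-reducible subgraph of $G$, contradicting the working hypothesis on $G$. Since $\deg_G(x) = 3$ with $u, v \in N(x) \cap V(P)$, the vertex $x$ has at most one further neighbor on $V(P)$, so $p$ must be an internal vertex of $P$. As an internal vertex of a conductive path, $p$ is a conductive $4$-vertex and hence has a unique $3$-neighbor $y \in V(G)$, with $y \neq x$.

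I would choose a maximal sub-path $Q \subseteq P$ containing $p$ whose endpoints $a, b$ lie in $N(x) \cap V(P)$ and whose internal vertices do not, and write $s = \ell(Q) \geq 2$. For each internal vertex $q$ of $Q$, let $y_q$ denote its unique $3$-neighbor (each $y_q \neq x$). Define $H = G[\{x\} \cup V(Q) \cup \{y_q\}]$ and $R = \{x\} \cup \{y_q\} \subseteq V(H)$. Each $r \in R$ is a $3$-vertex of $G$, so $\ell_H(r) \geq \deg_H(r) + 1$; each $w \in V(H) \setminus R$ is a $4$-vertex with $\ell_H(w) \geq \deg_H(w)$; and $R$ dominates $V(H)$ since $x$ is adjacent to $a, b$ and each internal $q$ is adjacent to $y_q$.

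When $s = 2$, $|V(H)| \leq 5$, and I would apply Lemma \ref{lem:fix} to $H$ with this $R$ to show $H$ is weakly $(\ell_H, 4)$-reductive: condition (1) is a short check of each single- and two-vertex deletion, and condition (2) follows from direct inspection of the few induced Gallai subgraphs of $H$, using that each $r \in R$ contributes excess $\ell_H(r) - \deg_T(r) \geq 1$, with strict excess $\geq 2$ whenever $\deg_T(r) < \deg_H(r)$. Lemma \ref{lem:weak-to-strong} then yields $(\ell_H, 4, 4^{-5})$-reductivity. When $s \geq 3$, I would instead invoke Lemma \ref{lem:orig_partition} with the partition whose weak part is $H_0 = G[\{x, a, b\}]$ (a $P_3$ with $\ell_H$-values $(2, 3, 2)$, weak by Lemma \ref{lem:weak-parts} item (\ref{item:weak-path}) since the path-neighbors $q_1, q_{s-1}$ of $a, b$ in $H$ lie in distinct other parts) and whose remaining parts are $H_i = G[\{q_i, y_{q_i}\}]$ for each internal $q_i$ of $Q$ (each a petal graph with root $y_{q_i}$ and length-$1$ path $(q_i)$, strong by Lemma \ref{lem:strong-parts} item (\ref{item:internal-petal}) since for $s \geq 3$ each $q_i$ has path-neighbors in two distinct parts outside $H_i$). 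With $b = 4$ and $d = 4$ this gives $(\ell_H, 4, 4^{-16})$-reductivity. In either case $H$ is $(4, \xi)$-reducible since $4^{-5}, 4^{-16} > \xi = 4^{-24}$, producing the required contradiction.

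The main obstacle will be handling degenerate adjacency patterns, for instance when consecutive internal vertices of $Q$ share a $3$-neighbor ($y_{q_i} = y_{q_{i+1}}$) or when some $y_q$ is adjacent to $a$, $b$, $x$, or another $y_{q'}$. When consecutive internal vertices share a $3$-neighbor, I would merge the two pair-parts into a single triangle strong part (a petal graph with root $y_{q_i}$ and path $(q_i, q_{i+1})$ of length $2$, still covered by Lemma \ref{lem:strong-parts} item (\ref{item:internal-petal})). Extra adjacencies at $y_q$ only increase the $\ell_H$-values of the affected vertices, which relaxes rather than tightens the part conditions, so the same partition structure and reducibility bound carry through.
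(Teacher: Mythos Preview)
Your overall strategy matches the paper's: build $H$ from the path, $x$, and the $3$-neighbors of the internal conductive vertices, then show $H$ is $(4,\xi)$-reducible via Lemma~\ref{lem:fix} in the small case and via Lemma~\ref{lem:orig_partition} in the large case. However, your partition has a genuine gap.

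You index parts by internal vertex and set $H_i = G[\{q_i, y_{q_i}\}]$, then merge when \emph{consecutive} internal vertices share a $3$-neighbor. But nothing you have written rules out \emph{non-consecutive} sharing: it is entirely possible that $y_{q_1} = y_{q_3}$ while $y_{q_2}$ is different, since $q_1,q_3$ are conductive $4$-vertices that may have a common $3$-neighbor $y \neq x$. In that case $H_1$ and $H_3$ overlap in $y$ and your merge rule does not apply; the obvious repartition $\{q_1, y, q_3\}$ is only a path (not a petal, since $q_1 q_3 \notin E(G)$) and is merely a weak part, but you have already spent your one weak part on $H_0$. There is a second, related failure even for consecutive sharing: when $s = 3$ and $y_{q_1} = y_{q_2}$, your partition collapses to $\{H_0, H_{1,2}\}$, and the merged triangle $H_{1,2}$ is \emph{not} $\ell_H$-strong, because both path-endpoints $q_1, q_2$ have their outside neighbor in the single remaining part $H_0$, so Lemma~\ref{lem:strong-parts}(\ref{item:internal-petal}) does not apply (and one can check directly that (FORB-$1$) fails for an $\ell_H^{H_0}$-assignment: the triangle gets lists of sizes $2,2,3$, and forbidding a color at the size-$3$ vertex can leave an odd cycle with identical $2$-lists). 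Your remark that ``extra adjacencies only increase $\ell_H$'' does not address either issue, since the problem is the partition structure, not the $f$-values.

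The paper avoids both problems by working with a \emph{globally} minimal counterexample: it chooses $(x,u,v)$ so that the violating induced conductive path $P$ is shortest over all triples. This immediately gives that no internal vertex of $P$ is adjacent to $x$ (your $Q$ equals $P$), and---crucially---that for every $3$-neighbor $r$ of $P'$, the set $N_P(r)$ is a \emph{connected} subpath of $P$ (otherwise $(r,q_i,q_j)$ would be a shorter counterexample). The paper then partitions by $3$-neighbor, setting $H_r = H[N_{P'}[r]]$, which yields disjoint petal parts. It also uses a higher small/large threshold ($|V(P)| \le 8$ versus $\ge 9$), so that in the partition case every part intersects $A$ in at most three vertices while $|V(A)| \ge 10$, guaranteeing the ``distinct-adjacent-parts'' hypothesis of Lemma~\ref{lem:strong-parts}(\ref{item:internal-petal}) and Lemma~\ref{lem:weak-parts}(\ref{item:weak-path}). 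Adopting this global minimality and the larger threshold would repair your argument and make it essentially coincide with the paper's.
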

\begin{proof}
    Suppose $G$ has a triple $(x,u,v)$ that does not satisfy the lemma. Let $(x,u,v)$ be chosen so that the induced conductive path $P$ joining $u$ and $v$ that violates the lemma is as short as possible.
    If $|V(P)|=2$, then clearly $(x,u,v)$ does not violate the lemma. Therefore, we assume that $|V(P)| \geq 3$.
    We write $P' = P \setminus \{u,v\}$.
    As $(x,u,v)$ is chosen so that $P$ is shortest, and as $(x,u,v)$ contradicts the lemma, no vertex of $P'$ is adjacent to $x$. Let $R$ be the set
    of all $3$-neighbors of $P'$.
    Since the vertices of $P'$ are conductive and not adjacent to $x$, it follows that $|R| \geq 1$.
    We write $H = G[V(P) \cup R \cup \{x\} ]$.

    We consider two cases. 
    First, suppose that $|V(P)| \leq 8$. 
    Then, $|V(H)| \leq 15$. 
    As each vertex of $P$ is conductive or stressed, $R \cup \{x\}$ is a dominating set of $H$. Furthermore, each vertex $r \in R \cup \{x\}$ satisfies $\ell_H(r) \geq \deg_H(r) + 1$.
    It is also straightforward to check that for each induced Gallai tree subgraph $T$ of $H$, $\ell_H$ is not bad on $T$, and we show a proof of this claim in the appendix.

    Finally, we observe that for each
    $s \in V(H)$, each component of $H - s$ contains a vertex of $R \cup \{x\}$. 
    Furthermore, for each adjacent pair $s,t \in V(H)$ for which at least one of $s,t$ belongs to $P$, each component of $H \setminus \{s,t\}$ contains a vertex of $R \cup \{x\}$.
    Therefore, Lemma \ref{lem:fix} implies that $H$ is weakly reducible, contradicting Lemma \ref{lem:no-weak}.

    On the other hand, suppose that $|V(P)| \geq 9$.
    We write $A = H[V(P) \cup \{x\}]$ and observe that $A$ is an induced cycle in $H$.
      As $P$ is chosen to be shortest,
    for each $r \in R$, $P[N_P(r)]$ is a subpath of $P$.
    For each $r \in R$, we let $H_r = H[ N_{P'}[r]]$,
    and we observe that $H_r \cap V(P')$
    is a connected vertex subset of $A$ with at most three vertices.
    Furthermore, as each vertex of $P'$ is conductive, it follows that $V(H_r) \cap V(H_{r'}) = \emptyset$ for each distinct pair $r,r' \in R$.
     We also write $H_x = H[\{x,u,v\}]$, and as $P$ is an induced path of length at least $8$, $H_x$ is a path subgraph of $A$ on three vertices.
     We write $\mathcal H = \{H_r : r \in R\} \cup \{H_x\}$ and observe that $\mathcal H$ is a subgraph partition of $H$.
     For each $H_i \in \mathcal H$ and $w,z \in V(H_i)$,
     it holds that $\dist_A(w,z) \leq 2$.
    We aim to apply Lemma \ref{lem:orig_partition} to $\mathcal H$ with $b = d = 4$.

    For each $r \in R$,
    the endpoints of $P'[N_{P'}(r)]$ have neighbors which are at a distance of at least $6$ in $A \setminus H_r$ and hence belong to distinct parts of $\mathcal H$. Therefore,
    Lemma \ref{lem:strong-parts} (\ref{item:internal-petal}) implies that $H_r$ is an $\ell_H$-strong part of $\mathcal H$.

    We also note that $H_x$ 
    consists of a path
    $(u,x,v)$ for which $\ell_H(u) \geq \deg_H(u)$ and $\ell_H(v) \geq \deg_H(v)$,
    and $\ell_H(x) \geq \deg_H(x) + 1$.
    As $|V(A)| \geq 10$,
    the neighbors of $u$ and $v$ have a distance of at least $6$ in $A \setminus H_x$ and hence belong to distinct parts of $\mathcal H$. Therefore, 
    Lemma \ref{lem:weak-parts} (\ref{item:weak-path}) implies that $H_x$ is an $\ell_H$-weak part of $\mathcal H$.
    Thus, we apply Lemma \ref{lem:orig_partition}
    with $b=4$ and $d=4$ to conclude that
    $H$ is 
    $(4,(16)^{-4}4^{-8})$-reducible. As $(16)^{-4} 4^{-8} > \xi$, $H$ is $(4,\xi)$-reducible, 
    a contradiction. Therefore, the lemma holds.
\end{proof}

\begin{lemma}
\label{lem:cycle}
    If $C$ is an induced cycle in $G$ consisting of conductive vertices, then
    $C$ is a triangle, and
        there exists a single $3$-vertex $r \in V(G)$ adjacent to all vertices of $C$. 
\end{lemma}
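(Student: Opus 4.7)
The plan is to use Lemma \ref{lem:far_neighbors} to severely restrict how any $3$-vertex can attach to $C$, then handle the case $|C|\ge 4$ uniformly via a partition into pendants using Lemma \ref{lem:orig_partition}, and finally dispose of the triangle case $|C|=3$ by hand.

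Write $C=v_1v_2\cdots v_k$. Since each $v_i$ is conductive, it is a $4$-vertex with a unique $3$-neighbor $r_i$ (necessarily off $C$, since the cycle-neighbors are $4$-vertices). Because $v_i$ has only one $3$-neighbor, no cycle edge $v_iv_{i+1}$ can be the insulated edge of a special $K_4$; hence every edge and vertex of $C$ is conductive, so every arc of $C$ is an induced conductive path. The first step will be to show that if $|C|\ge 4$ then every $3$-vertex $r\in V(G)$ satisfies $|N(r)\cap V(C)|\le 1$: if $r$ were adjacent to two distinct cycle-vertices $v_a$ and $v_b$, then applying Lemma \ref{lem:far_neighbors} to each of the two arcs of $C$ between them would force $r$ to be adjacent to every vertex of $C$, contradicting $\deg(r)=3<|C|$.

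Assuming $|C|\ge 4$, this restriction makes the $r_i$ pairwise distinct and each $r_i$ cycle-adjacent only to $v_i$. I will set $H=G[V(C)\cup\{r_1,\dots,r_k\}]$ with subgraph partition $\mathcal H=\{H_i:1\le i\le k\}$, where $H_i=G[\{v_i,r_i\}]\cong K_2$. Viewing each $H_i$ as a petal graph with root $r_i$ and single-vertex path $(v_i)$, I would apply Lemma \ref{lem:strong-parts}(\ref{item:internal-petal}): the required equalities $\ell_H(r_i)=\deg_H(r_i)+1$ and $\ell_H(v_i)=\deg_H(v_i)$ hold, and the cycle-neighbors $v_{i-1}\in H_{i-1}$ and $v_{i+1}\in H_{i+1}$ lie in distinct parts since $k\ge 3$. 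So each $H_i$ is $\ell_H$-strong, and Lemma \ref{lem:orig_partition} with $b=d=4$ yields that $H$ is $(4,2^{-32})$-reductive, hence $(4,\xi)$-reducible, contradicting the standing assumption on $G$. Therefore $|C|=3$.

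It remains to show that in the triangle $C=v_1v_2v_3$ the three $3$-neighbors coincide. The main obstacle is that the obvious candidate path $v_iv_\ell v_j$ through the third vertex is not induced in $G$ (the chord $v_iv_j$ is present), so Lemma \ref{lem:far_neighbors} cannot be applied directly to identify the $r_i$. Instead I will split on $|\{r_1,r_2,r_3\}|$. If all three are distinct, then each $r_i$ is cycle-adjacent only to $v_i$ (else two of them would coincide), so the $K_2$-partition argument on $H=G[\{v_1,v_2,v_3,r_1,r_2,r_3\}]$ goes through verbatim and yields a $(4,\xi)$-reducible subgraph, a contradiction. If exactly two coincide, say $r_1=r_2=r\ne r_3$, I will instead take $H=G[\{v_1,v_2,v_3,r,r_3\}]$ on five vertices, with dominating set $R=\{r,r_3\}$ satisfying $\ell_H(r)=\deg_H(r)+1$, $\ell_H(r_3)=\deg_H(r_3)+1$, and $\ell_H(v_i)=\deg_H(v_i)$; a short enumeration over connected $U\subseteq V(H)$ with $|U|\le 2$ and over induced Gallai-tree subgraphs verifies the hypotheses of Lemma \ref{lem:fix}, so $H$ is weakly reducible, contradicting Lemma \ref{lem:no-weak}. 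Hence $r_1=r_2=r_3$, giving the desired common $3$-neighbor of all of $C$.
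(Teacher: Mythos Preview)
Your proof is correct and takes a genuinely different route from the paper's.

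The key difference is how you exploit Lemma \ref{lem:far_neighbors}. The paper only uses it to conclude that $N_C(r)$ is a consecutive arc of $C$, then splits into cases $|V(C)|\ge 7$ (partition into petals $H_r=H[N_C[r]]$ with $1\le t\le 3$ and apply Lemma \ref{lem:orig_partition}) and $|V(C)|\le 6$ (apply Lemma \ref{lem:fix} directly to $H=G[V(C)\cup R]$, with a lengthy Gallai-tree verification deferred to the appendix). You instead apply Lemma \ref{lem:far_neighbors} to \emph{both} arcs of $C$ between any two cycle-neighbors of a $3$-vertex $r$, which forces $r$ to be adjacent to all of $C$ and hence, when $|C|\ge 4$, yields $|N_C(r)|\le 1$. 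This gives you pairwise distinct pendants $r_i$ and a uniform $K_2$-partition $\{G[\{v_i,r_i\}]\}$ that works for every $|C|\ge 4$ at once, and even for the triangle case with three distinct $r_i$'s. What you gain is a cleaner, threshold-free argument that avoids the paper's appendix case analysis for cycles of length $4$--$6$; the cost is the small separate treatment of the triangle with $|\{r_1,r_2,r_3\}|=2$ via Lemma \ref{lem:fix}, which is short. Both approaches ultimately rest on Lemma \ref{lem:orig_partition} through Lemma \ref{lem:strong-parts}(\ref{item:internal-petal}), but your partition is finer and the verification correspondingly lighter.
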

\begin{proof} 
    Let $R$ be the set of $3$-vertices adjacent to $C$. 
    By Lemma \ref{lem:far_neighbors}, for each $r \in R$, the neighbors of $r$ in $C$ are consecutive.
    Furthermore, $|R| \geq \left \lceil \frac{|V(C)|}{3} \right  \rceil$.
    If $|R| = 1$, then $C$ is a triangle, and we are done. Therefore, we assume that $|R| \geq 2$ and aim for a contradiction.    
    We write $H = G[V(C) \cup R]$, and we claim that $H$ is $(4,\xi)$-reducible, which will give us our contradiction.

    First, we show that $|V(C)| \leq 6$. To this end, suppose that $|V(C)| \geq 7$.
    For each $r \in R$, we define an induced subgraph
    $H_r = H[N_C[r]]$, and we define $\mathcal H = \{H_r: r \in R\}$.
    As each vertex in $C$ is conductive, $\mathcal H$ is a subgraph partition of $H$.
    We observe that each part $H_r \in \mathcal H$ intersects $C$ in a path of at most three vertices, and thus any two vertices in a common part $H_r$ have distance at most two in $C$. Also, for each part $H_r \in \mathcal H$, the two vertices in $C$ adjacent to $H_r$ have distance at least $3$ in $C \setminus H_r$ and therefore belong to distinct parts of $\mathcal H$. Hence, Lemma \ref{lem:strong-parts} (\ref{item:internal-petal}) implies that each $H_r \in \mathcal H$ is an $\ell_H$-strong part.
     Then, by applying Lemma \ref{lem:orig_partition} with $b = d = 4$,
    $H$ is a $(4,(16)^{-4} 4^{-8})$-reducible subgraph of $G$, a contradiction. 
    Hence, we assume that 
    $|V(C)| \leq 6$.

    Now, as each vertex $v \in V(C)$ is conductive, $|R| \leq 6$, and hence $|V(H)| \leq 12$.
    We show that
    $H$ is weakly reducible, contradicting Lemma \ref{lem:no-weak}.
     We note that for each $r \in R$, $\ell_H(r) = \deg_H(r) + 1.$
    Furthermore, as each vertex in $C$ is conductive, $R$ is a dominating set of $H$. 
    
    It is straightforward
    to check that for each induced Gallai tree subgraph $T$ of $H$,
    $\ell_H$ is not bad on $T$, and we show a complete argument in the appendix.
    Furthermore, 
    for each $v \in V(H)$, each component of $H - v$ contains at least one vertex in $R$. Additionally,
    for each adjacent pair $u,v \in V(H)$ with at least one vertex in $C$, each component of $H \setminus \{u,v\}$ contains a vertex of $R$. Therefore, Lemma \ref{lem:fix} implies that $H$ is weakly reducible, a contradiction. 

    Therefore, $|R| = 1$, $C$ is a triangle, and the proof is complete.
\end{proof}

\begin{lemma}
\label{lem:stressed-triangle}
    No stressed vertex has two neighbors in a conductive triangle.
\end{lemma}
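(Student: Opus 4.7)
My strategy is to apply Lemma~\ref{lem:cycle} to the conductive triangle and then exhibit a weakly reducible subgraph on $7$ vertices, contradicting Lemma~\ref{lem:no-weak}. Suppose for contradiction that $v$ is a stressed vertex with two neighbors $a, b$ lying in a conductive triangle $\{a, b, c\}$. Since $\{a,b,c\}$ is an induced cycle consisting of conductive vertices, Lemma~\ref{lem:cycle} yields a single $3$-vertex $s$ with $N(s) = \{a, b, c\}$; because each of $a, b, c$ is conductive, $s$ is the common unique $3$-neighbor of all three. Let $r_1, r_2$ be the two distinct $3$-neighbors of $v$, so that $N(v) = \{a, b, r_1, r_2\}$.

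I would first record the necessary disjointness and nonadjacency facts. The three neighbors of $s$ are $\{a, b, c\}$, so $s \notin \{v, r_1, r_2\}$ and $s$ is nonadjacent to $v$; since $a, b, c$ are $4$-vertices, $r_1, r_2 \notin \{a, b, c\}$; and since $a, b, c$ are each conductive with unique $3$-neighbor $s$, neither $r_1$ nor $r_2$ is adjacent to any of $a, b, c$. Moreover $v$ is nonadjacent to $c$ since the four neighbors of $v$ are already named. Therefore $\{v, r_1, r_2, a, b, c, s\}$ consists of seven distinct vertices, and in the induced subgraph $H := G[\{v, r_1, r_2, a, b, c, s\}]$ the degrees are $\deg_H(v) = 4$, $\deg_H(a) = \deg_H(b) = 4$, $\deg_H(c) = \deg_H(s) = 3$, and $\deg_H(r_i) \in \{1, 2\}$.

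The main step is to apply Lemma~\ref{lem:fix} to $H$ with the dominating set $R := \{r_1, r_2, s\}$. A direct computation gives $\ell_H(v) = \ell_H(a) = \ell_H(b) = \ell_H(s) = 4$, $\ell_H(c) = 3$, and $\ell_H(r_i) = 1 + \deg_H(r_i) \geq 2$, so $R$ is precisely the set of vertices with $\ell_H(w) > \deg_H(w)$, and $R$ dominates $H$ because $s$ dominates $\{a, b, c\}$ while $\{r_1, r_2\}$ dominates $v$. The cut condition of Lemma~\ref{lem:fix} is easy to verify by inspection: removing any single vertex, or any two adjacent vertices not both in $R$, always leaves each component containing $s$ or some $r_i$.

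The main obstacle is verifying the third condition, that no induced Gallai tree $T \subseteq H$ is bad for $\ell_H$. The guiding principle is that badness forces $\sum_{w \in V(T)}(\ell_H(w) - \deg_T(w)) \leq 2$, yet $\ell_H(w) = 4$ for four vertices of $H$. The only induced $K_4$ in $H$ is $\{a, b, c, s\}$, where this slack already equals $3$; the only induced triangles containing $v$ are $\{v, a, b\}$ and possibly $\{v, r_1, r_2\}$, and in both cases the total slack exceeds $2$; any larger induced Gallai tree containing $v$ also contains $a$ or $b$ with $\deg_T \leq 2$ (since $\{v,a,b,c\}$ and $\{v,a,b,s\}$ each contain an induced $4$-cycle and are thus not Gallai), contributing slack at least $2$ there; and the only vertices of $\ell_H$-value $2$ are the $r_i$'s, whose only neighbor in $H$ apart from each other is $v$ (which has $\ell_H = 4$), which rules out bad edges and $P_3$'s. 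After this case enumeration, Lemma~\ref{lem:fix} yields that $H$ is weakly $(\ell_H, 4)$-reductive, and since $|V(H)| = 7 \leq 24$, Lemma~\ref{lem:no-weak} provides the desired contradiction.
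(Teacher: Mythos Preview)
Your approach is essentially the same as the paper's: define $H = G[\{v, r_1, r_2, a, b, c, s\}]$, take $R = \{r_1, r_2, s\}$, and apply Lemma~\ref{lem:fix}. The setup and the cut-condition check are correct.

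Two small imprecisions in your Gallai-tree sketch are worth flagging. First, the induced subgraphs $G[\{v,a,b,c\}]$ and $G[\{v,a,b,s\}]$ are $K_4$ minus an edge; these do \emph{not} contain an induced $4$-cycle (the $4$-cycle $v$--$a$--$c$--$b$--$v$ has the chord $ab$). They are still non-Gallai, but because each is a $2$-connected graph that is neither a clique nor an odd cycle (equivalently, a theta graph). Second, your claim ``$a$ or $b$ lies in $T$ with $\deg_T \le 2$'' fails when $b \notin V(T)$ but $c,s \in V(T)$: for instance $T = \{v,a,c,s\}$ is a valid Gallai tree (a triangle $acs$ with pendant $v$) in which $\deg_T(a)=3$. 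The fix is immediate --- in that case $\deg_T(s) \le 2$, so the slack at $s$ is $\ell_H(s)-\deg_T(s) \ge 4-2 = 2$ --- but your case split should distinguish ``$a,b$ both in $T$'' (where your argument works) from ``exactly one of $a,b$ in $T$'' (where you instead use $s$ or $v$ to find slack $\ge 2$).
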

\begin{proof}
    Suppose that $v \in V(G)$ is a stressed vertex with two neighbors in a conductive triangle $C$. By Lemma \ref{lem:cycle}, $G$ has a $3$-vertex $r$ satisfying $N(r) = V(C)$. We write $R$ for the set of $3$-neighbors of $v$, along with $r$. We write $H = G[V(C) \cup \{v\} \cup R]$, and we observe that $R$ is a dominating set of $H$. It is easy to check that for each induced Gallai tree subgraph $T$ of $H$, $\ell_H$ is not bad on $T$. We show a complete argument in the appendix. Furthermore, for each $x \in V(H)$,  each component of
    $H-x$ contains a vertex of $R$, and for each adjacent vertex pair $w,x \in V(H)$, each component of $H \setminus \{w,x\}$ contains a vertex of $R$. Therefore, $H$ is weakly reducible by Lemma \ref{lem:fix}, contradicting Lemma \ref{lem:no-weak}.
\end{proof}

\begin{lemma}
\label{lem:triangle-block}
    If $B$ is a $2$-connected subgraph of $G$ consisting of conductive vertices, then $B$ is a triangle.
\end{lemma}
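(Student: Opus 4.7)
The plan is to assume $|V(B)| \geq 4$ and derive a contradiction via a parity count on the $B$-degrees of conductive vertices. Since $B \subseteq G[V(B)]$ and adding edges within $V(B)$ preserves $2$-connectivity, I would first replace $B$ with $G[V(B)]$ and assume that $B$ is induced in $G$.

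Next, I would establish two structural facts. (1) Every edge of $B$ lies in a triangle: a shortest cycle of $B$ through a given edge $uv$ must be induced, because any chord would produce a strictly shorter cycle still containing $uv$, and hence Lemma \ref{lem:cycle} forces its length to be $3$. (2) No edge of $B$ lies in two distinct triangles: if $uvw_1$ and $uvw_2$ were two such triangles, then Lemma \ref{lem:cycle} would produce $3$-vertices $r_1, r_2$ of $G$ with $N(r_i) = \{u, v, w_i\}$; these $r_i$ must be distinct (else a single $3$-vertex would have four neighbors), and then $u$ would have at least the five distinct neighbors $v, w_1, w_2, r_1, r_2$, contradicting $\deg_G(u) = 4$.

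Given (1) and (2), for each $v \in V(B)$ I would define a map $\pi$ on $N_B(v)$ that sends each neighbor $x$ to the unique vertex $\pi(x) \in N_B(v)$ such that $\{v, x, \pi(x)\}$ spans a triangle in $B$; facts (1) and (2) make $\pi$ a fixed-point-free involution, so $\deg_B(v)$ is even. Since $v$ is conductive we have $\deg_B(v) \leq \deg_G(v) = 4$, and $2$-connectivity yields $\deg_B(v) \geq 2$. If $\deg_B(v) = 4$, then all four neighbors of $v$ in $G$ would lie in $V(B)$ and thus be conductive $4$-vertices, contradicting that $v$ (being conductive) has a $3$-neighbor. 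Hence $\deg_B(v) = 2$ for every $v \in V(B)$, making $B$ a $2$-regular connected graph, i.e., a single cycle. Since $|V(B)| \geq 4$, this cycle is an induced cycle of conductive vertices of length at least $4$, directly contradicting Lemma \ref{lem:cycle}.

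I expect the main obstacle to be setting up the two triangle facts cleanly; once they are in hand, the involution/parity argument concludes the proof quickly without invoking any partition lemma or reducibility construction.
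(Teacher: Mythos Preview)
Your proof is correct and takes a genuinely different route from the paper's.

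The paper argues directly: take any induced cycle $C$ in $B$, which by Lemma~\ref{lem:cycle} is a triangle $(u,v,w)$ with a $3$-vertex $r$ satisfying $N(r)=V(C)$. If $|V(B)|\geq 4$, some vertex of $C$, say $u$, has a neighbor $x\in V(B)\setminus V(C)$; Menger's theorem then puts the edge $ux$ on a cycle of $B$, and the shortest such cycle is an induced triangle $C'\neq C$ with its own $3$-vertex $r'\neq r$. Now $u$ has the two $3$-neighbors $r,r'$, so $u$ is stressed rather than conductive, a contradiction.

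Your approach instead extracts two structural facts (every edge of $G[V(B)]$ lies in a triangle, and in exactly one) and then runs a parity argument: the unique-triangle map is a fixed-point-free involution on each neighborhood, forcing every $B$-degree to be even, and since degree $4$ would absorb the conductive vertex's unique $3$-neighbor into $B$, the graph is $2$-regular and hence a cycle of length $\geq 4$, contradicting Lemma~\ref{lem:cycle}. Your Fact~(2) is close in spirit to the paper's contradiction (two distinct $r_i$'s adjacent to a common vertex), but you leverage it globally rather than locally. The paper's argument is shorter and avoids the involution machinery; yours gives a cleaner structural picture of why $B$ cannot be larger than a triangle and sidesteps the explicit appeal to Menger's theorem.
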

\begin{proof}
    Let $B$ be a $2$-connected subgraph of $G$ consisting of conductive vertices. As $B$ is $2$-connected, $B$ has an induced cycle $C$. By Lemma \ref{lem:cycle}, $C$ is a triangle, and there exists a $3$-vertex $r \in V(G)$ for which $N(r) = V(C)$. We write $C = (u,v,w)$.
    
    Now, suppose that $|V(B)| \geq 4$. Without loss of generality, $u$ has a neighbor $x \in V(B \setminus C)$. 
    By Menger's theorem, there exist edge-disjoint paths $P_1$ and $P_2$ in $B$ from $x$ to $u$. At most one of these paths contains the edge $ux$, so we assume without loss of generality that $P_1 = (u,x)$ and $ux \not \in E(P_2)$. Hence, $B$ has a cycle containing the edge $ux$. 
    Now, let $C'$ be the shortest cycle in $B$ containing $ux$. $C'$ is induced, so by Lemma \ref{lem:cycle}, $C'$ is a triangle, and hence there exists a $3$-vertex $r'$ for which $N(r') = V(C')$. As $C \neq C'$, it follows that $r \neq r'$. Therefore, $r,r' \in N(u)$, and so $u$ is stressed and not conductive, a contradiction. Therefore, $|V(B)| = 3$, and $B$ is a triangle.
\end{proof}

\begin{lemma}
\label{lem:close-stressed}
    Let $P$ be a $4^-$-path in $G$ that joins two stressed vertices $s$ and $t$, such that $P$ does not consist solely of an insulated edge. Suppose that all internal vertices of $P$ are conductive $4$-vertices except for at most one vertex, which we call $x$. Additionally, suppose that either $x$ is either an insulated $4$-vertex, or $x$ is $3$-vertex satisfying
    $x \notin N(s) \cup N(t)$. Then, $|V(P)| \geq 9$.
\end{lemma}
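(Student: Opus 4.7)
The approach is contradiction: assume $|V(P)| \leq 8$ and produce an induced subgraph $H \subseteq G$ that is $(4, \xi)$-reducible, contradicting the standing assumption on $G$.

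Let $R$ be the set of $3$-vertices of $G$ outside $V(P)$ that are adjacent to $V(P)$, and set $H = G[V(P) \cup R]$. Counting contributions to $R$---at most two from each stressed endpoint, at most one from each conductive internal $4$-vertex, and zero from $x$---yields $|V(H)| \leq 8 + 4 + 6 = 18 \leq 24$. Each $3$-vertex $r \in V(H)$ satisfies $\ell_H(r) \geq \deg_H(r) + 1$, while each $4$-vertex $v \in V(H)$ satisfies $\ell_H(v) = \deg_H(v)$. In the sub-cases where no exceptional vertex $x$ exists, or where $x$ is a $3$-vertex with $x \notin N(s) \cup N(t)$, I apply Lemma~\ref{lem:fix} with dominating set $D$ equal to every $3$-vertex of $H$: stressed endpoints $s, t$ are dominated by their two $3$-neighbors, each conductive internal $4$-vertex by its unique $3$-neighbor, and, when applicable, $x$ itself lies in $D$. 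The connectivity and Gallai-tree conditions of Lemma~\ref{lem:fix} are verified by a routine case analysis analogous to the appendix arguments for Lemmas~\ref{lem:far_neighbors}, \ref{lem:cycle}, and~\ref{lem:stressed-triangle}. Lemma~\ref{lem:fix} then yields weak reducibility of $H$, contradicting Lemma~\ref{lem:no-weak}.

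The main obstacle is the sub-case in which $x$ is an insulated $4$-vertex. Because $x$ has no $3$-neighbor in $G$, the vertex $x$ is not dominated by $D$ and Lemma~\ref{lem:fix} does not apply directly. Here I apply Lemma~\ref{lem:orig_partition} with a carefully chosen partition of $H$. When $|V(P)| \geq 4$, $x$ has a conductive $P$-neighbor $p$, and I form the single $\ell_H$-weak part $\{x, p, r\}$---an incomplete petal via Lemma~\ref{lem:weak-parts}(1), with root $r$ equal to $p$'s unique $3$-neighbor and path $(p, x)$---together with $\ell_H$-strong modified-petal parts around each stressed endpoint (via Lemma~\ref{lem:strong-parts}(2)) and $\ell_H$-strong petal parts around the remaining conductive internal $4$-vertices (via Lemma~\ref{lem:strong-parts}(1)); the neighbor conditions of Lemmas~\ref{lem:weak-parts} and~\ref{lem:strong-parts} follow from the structure of $P$ together with $|V(P)| \geq 4$. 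Since each part has order at most four, Lemma~\ref{lem:orig_partition} with $b = d = 4$ gives $(4, 16^{-4} \cdot 4^{-8})$-reducibility, i.e., $(4, 2^{-32})$-reducibility; since $2^{-32} > \xi = 2^{-48}$, this yields a $(4, \xi)$-reducible subgraph, as required. The corner case $|V(P)| = 3$ (in which $x$'s only $P$-neighbors are $s$ and $t$) is handled by a direct verification that $H$, possibly extended by the two non-$P$-neighbors of $x$, is weakly reducible: the high list sizes $\ell_H(s), \ell_H(t) \geq 3$ permit a careful color-selection argument that avoids the bottleneck at $x$ (by choosing $\phi(s)$ outside $L(x)$, which is always feasible since $|L(s) \setminus L(x)| \geq 2$).
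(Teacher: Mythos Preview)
Your overall strategy differs substantially from the paper's. The paper takes a \emph{shortest} counterexample $P$ (so that $A := G[V(P)]$ is an induced path, or an induced cycle containing the single insulated edge $st$), sets $H = G[V(A)\cup R]$, and then verifies (FIX') and (FORB-$2$) for $H$ \emph{directly} by explicit case analysis---never invoking Lemma~\ref{lem:fix} or Lemma~\ref{lem:orig_partition}. The driving uniform observation is that each stressed endpoint $u\in\{s,t\}$ has two $3$-neighbors in $R$, whence $|L(u)|\ge\deg_A(u)+2$; this two-unit slack handles every sub-case, including the insulated-$x$ one, without any partition of $H$.

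Your plan has genuine gaps in the insulated-$x$ case. First, the partition you describe need not be well-defined: a single $3$-vertex can be the unique $3$-neighbor of a conductive internal vertex $v$ and simultaneously one of the two $3$-neighbors of $s$ (or of $t$), so your part $\{v,r_v\}$ and your part $\{s,r_s^1,r_s^2\}$ overlap; likewise two consecutive conductive internal vertices may share their $3$-neighbor. The paper sidesteps this entirely by not partitioning. A related issue is that you never pass to a shortest counterexample, so $G[V(P)]$ may contain chords; this threatens the ``neighbor in a distinct part'' hypotheses of Lemmas~\ref{lem:strong-parts} and~\ref{lem:weak-parts} that your strong and weak parts must satisfy. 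Second, your corner case $|V(P)|=3$ is not correct as stated: with $H=G[\{s,x,t\}\cup R]$ one has $\ell_H(x)=2$ while $\ell_H(s)\ge 3$, so only $|L(s)\setminus L(x)|\ge 1$ is guaranteed, not $\ge 2$; adjoining the $4^+$-neighbors of $x$ to $H$ does not help, since such vertices may have $\ell_H$-value below their $H$-degree.
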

\begin{proof}
    Suppose for contradiction that the lemma is false, so that there exists a counterexample $P$ with $|V(P)| \leq 8$. 
    Let $P$ be a shortest counterexample.
    We let $A = G[V(P)]$. By the minimality of $P$, $A$ is either an induced conductive path or an induced cycle containing exactly one insulated edge $st$. Let $R$ be the set of $3$-neighbors of $A$ not including $V(A)$, and let $H = G[V(A) \cup R]$. Since each vertex in $A$ has at most two $3$-neighbors by Lemma \ref{lem:d-2}, $|V(H)| \leq 24$. We aim to show that $H$ is weakly reducible, contradicting Lemma \ref{lem:no-weak}. To this end, let $L$ be an $\ell_H$-assignment on $H$. We use the fact that $\ell_H(v) \geq \deg_H(v)$ for each $v \in V(A)$, and $\ell_H(r) = \deg_H(r) + 1$ for each $r \in R$.

    To show that (FIX') holds, we observe that for each $w \in V(H)$, each component of $H - w$ contains a 
    vertex $r \in R$, which satisfies $\ell_{H-w}(r) > \deg_{H-w}(r)$. Therefore, by Lemma \ref{lem:ERT}, $H-w$ is $\ell_{H-w}$-choosable, and hence for each $w \in V(H)$ and $c \in L(w)$, $H$ has an $L$-coloring $\phi$ for which $\phi(w) = c$.

    Next, we show that (FORB-$2$) holds. To this end, let $w, z \in V(H)$, and let $c \in L(w) \cup L(z)$. We aim to show that $H$ has an $L$-coloring $\phi$ for which $c \not \in \{\phi(w) , \phi(z)\}$. As (FIX') holds, we may assume that $w \neq z$.
    We observe that for each $v \in V(H)$, $|L(v)| \geq 2$. Additionally, for each $v \in V(A)$, if $v$ has a neighbor in $R$, then $|L(v)| > \deg_A(v)$.

    \begin{enumerate}
        \item 
        \label{item:w-z-in-A}
        First, we consider the case where $w, z \in V(A)$. Let $L'(w) = L(w) - c$, $L'(z) = L(z) - c$, and $L'(v) = L(v)$ for each $v \in V(H) \setminus \{w, z\}$. As $s$ has two $3$-neighbors, neither of which are in $A$, $|L'(s)| \geq \deg_A(s) + 2 - 1 = \deg_A(s) + 1$. Similar reasoning shows that $|L'(t)| \geq \deg_A(t) + 1$. Now, we will consider three subcases: $|L(w)| > \deg_A(w)$ and $|L(z)| > \deg_A(z)$, $|L(w)| = \deg_A(w)$ and $|L(z)| > \deg_A(z)$, and $|L(w)| = \deg_A(w)$ and $|L(z)| = \deg_A(z)$. 

        If $|L(w)| > \deg_A(w)$ and $|L(z)| > \deg_A(z)$, then for each $v \in V(A)$, $|L'(v)| \geq \deg_A(v)$ and $|L'(s)| > \deg_A(s)$. Hence, Lemma \ref{lem:ERT} implies that we may give $A$ an $L'$-coloring. By greedily extending $\phi$ to $R$, we can complete an $L$-coloring of $H$ for which $c \notin \{\phi(w), \phi(z)\}$.

        If $|L(w)| = \deg_A(w)$ and $|L(z)| > \deg_A(z)$, then we first let $\phi(w) = a$ for some $a \in L'(w)$. Then, for each $v \in N_H(w)$, we define $L''(v) = L'(v) - a$, and for each $v \in V(H) \setminus N_H[w]$, we define $L''(v) = L'(v)$. Since $|L(z)| > \deg_A(z)$, we have $|L''(v)| \geq \deg_{A - w}(v)$ for each $v \in V(A - w)$. Additionally, for each $u \in \{s, t\}$, we have $|L''(u)| > \deg_{A - w}(u)$, so each component of $A - w$ has a vertex with strictly more available colors than its degree in $A - w$. It follows from Lemma \ref{lem:ERT} that we we may extend $\phi$ to $A - w$ while avoiding $c$ at $z$. Then, we greedily extend $\phi$ to $R$ to complete an $L$-coloring of $H$ for which $c \notin \{\phi(w), \phi(z)\}$. 

        Finally, we consider the case that $|L(w)| = \deg_A(w)$ and $|L(z)| = \deg_A(z)$. We observe that $\{w, z\} \cap \{s, t\} = \emptyset$. We decompose $A \setminus \{w, z\}$ into at most three conductive paths: one that contains $s$, one that contains $t$, and possibly one that contains neither $s$ nor $t$. We call these paths $P_s$, $P_t$, and $P_{wz}$, respectively. Additionally, we see that neither $w$ nor $z$ is a $3$-vertex. Hence, since $A$ contains only $3$-vertices, conductive $4$-vertices, stressed $4$-vertices, and at most one insulated $4$-vertex, 
        we may assume without loss of generality that $w$ is a conductive $4$-vertex. Let $r$ be the $3$-neighbor of $w$. Then, $r \in V(A)$ since $|L(w)| = \deg_A(w)$. The existence of $r$ implies that $z$ is not insulated, so $z$ is a conductive $4$-vertex. Additionally, as the $3$-neighbor of $z$ is in $A$, $z$ must be adjacent to $r$. Now, we observe that $r \in V(P_{wz})$; otherwise $A$ is not induced. In particular, $A[\{w, r, z\}]$ induces a subpath of $A$, so $w$ and $z$ are not adjacent. We have $|L'(w)| \geq 1$ and $|L'(z)| \geq 1$. Since $w$ and $z$ are not adjacent, we can assign $\phi(w) \in L'(w)$ and $\phi(z) \in L'(z)$. Further, as $r$ is a $3$-vertex, after coloring $w$ and $z$, we may give $r$ a color from $L'(r) \setminus \{\phi(w), \phi(z)\} \neq \emptyset$. Now, for each vertex $v \in V(H) \setminus \{w, r, z\}$, let $L''(v) \subseteq L(v)$ be the set of colors available at $v$ after $w, r$, and $z$ have been colored. For each $v \in V(A) \setminus \{w, r, z\}$, we have $|L''(v)| \geq \deg_{A - \{w, r, z\}}(v)$, and we have $|L''(s)| \geq \deg_{A - \{w, r, z\}}(s) + 1$ and $|L''(t)| \geq \deg_{A - \{w, r, z\}}(t) + 1$. Thus, using Lemma \ref{lem:ERT}, we can extend $\phi$ to $P_s$ and $P_t$. Then, we greedily extend $\phi$ to $R$ to complete an $L$-coloring of $H$ for which $c \notin \{\phi(w), \phi(z)\}$. 

        \item 
        Next, suppose that $w$ and $z$ are both $3$-vertices.
        We define $L'(v) = L(v) - c$ for $v \in \{w,z\}$, and we define $L'(v) = L(v)$ for all $v \in V(H) \setminus \{w,z\}$.
        Then, for each vertex $v \in V(H)$,
        $|L'(v)| \geq \deg_H(v)$.
        If $H$ has a $3$-vertex $r \in R \setminus \{w,z\}$, then as $\ell_H(r) > \deg_H(r)$, Lemma \ref{lem:ERT} implies that $H$ has an $L$-coloring $\phi$ such that $c \not \in \{\phi(x), \phi(y)\}$. Otherwise, $s$ and $t$ have the same set of $3$-neighbors (namely $\{w,z\}$),
        so Lemma \ref{lem:stressed-special-K4} implies that $s$ and $t$ belong to a special $K_4$, which we call $K$. Then, $H[V(K) \cup V(A)]$ is $2$-connected and neither a clique nor odd cycle, so Lemma \ref{lem:ERT} implies that $H$ has an $L$-coloring $\phi$ for which $c \not \in \{\phi(w), \phi(z)\}$. 

        \item 
        Finally, suppose that $w \in V(A)$, $z \in R$, and $w$ is a $4$-vertex.
        We color $z$ with a color $\phi(z) \in L(z) - c$.
        Then, if $|L(w)| \geq 3$, 
        we color $w$ with a color $\phi(w) \in L(w) \setminus \{c,\phi(z)\}$.
        Otherwise, $|L(w)| = 2$, so $w \notin \{s, t\}$, which guarantees $\deg_A(w) = 2$. 
        Because $\deg_A(w) = 2 = |L(w)|$, $w$ has no neighbors in $R$; in particular $wz \not \in E(G)$. Thus, we color $w$ with a color $\phi(w) \in L(w) - c$.

        Now, for each $v \in V(A) - w$, we define $L'(v) \subseteq L(v)$ to be the set of colors $a \in L(v)$ such that $v$ has no neighbor of color $a$. We observe that for each $v \in V(A) - w$, $|L'(v)| \geq \deg_{A-w} (v)$. Furthermore, each component of $A - w$ contains a stressed endpoint $u \in \{s, t\}$. As we demonstrated in (\ref{item:w-z-in-A}), $|L(u)| \geq \deg_{A}(u) + 2$, so $|L'(u)| \geq \deg_{A - w}(u) + 2 - 1 = \deg_{A - w}(u) + 1$. 
        Hence, we give $A - w$ an $L'$-coloring using Lemma \ref{lem:ERT}. Then, we greedily extend our $L$-coloring to $R - z$, giving an $L$-coloring of $H$ for which $c \not \in \{\phi(w),\phi(z)\}$.
    \end{enumerate}

    Thus, $H$ is weakly reducible, contradicting Lemma \ref{lem:no-weak}. Therefore, $|V(P)| \geq 9$. 
\end{proof}

\subsection{Global properties of a minimal counterexample}
In this subsection, we identify certain global properties satisfied by our counterexample $G$.
Each of the following global properties is proven by showing that a certain type of graph of unbounded size is $(4,\xi)$-reducible and therefore forbidden as a subgraph of $G$. Our ability to consider reducible subgraphs of unbounded size is a direct consequence of our framework in Section \ref{sec:framework}. In our proofs, we make extensive use of our partition lemma (Lemma \ref{lem:orig_partition}).

\begin{lemma}
\label{lem:con-stressed}
    No two stressed vertices of $G$ are conductively connected.
\end{lemma}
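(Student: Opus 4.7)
The plan is to assume for contradiction that two stressed vertices $s, t \in V(G)$ are conductively connected, and then to construct a $(4, \xi)$-reducible induced subgraph of $G$, contradicting the choice of $G$. I would first choose a shortest conductive $s$-$t$ path $P = (s, v_1, \ldots, v_k, t)$. This path must then be induced: any chord would yield a shorter $s$-$t$ path, and any such shorter path remains conductive since every edge between two conductive $4$-vertices is non-insulated (a conductive vertex cannot lie in a special $K_4$). Because $P$ is a $4^-$-path joining two stressed vertices with all internal vertices conductive $4$-vertices and $P$ is not a single insulated edge, Lemma \ref{lem:close-stressed} gives $|V(P)| \geq 9$, that is, $k \geq 7$.

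Each conductive $v_i$ has a unique $3$-neighbor $r_i$, and Lemma \ref{lem:far_neighbors} strongly constrains coincidences among these $3$-vertices: $r_i = r_j$ implies $|i-j| \leq 2$; any $3$-neighbor of $s$ that equals some $r_i$ satisfies $i \in \{1, 2\}$, and symmetrically $i \in \{k-1, k\}$ for $t$; and $s, t$ share no $3$-neighbor, since otherwise that shared $3$-neighbor would need degree at least $|V(P)| - 1 \geq 8$. Let $r_s^1, r_s^2$ (respectively $r_t^1, r_t^2$) denote the two $3$-neighbors of $s$ (respectively $t$), and set $H = G[V(P) \cup \{r_1, \ldots, r_k, r_s^1, r_s^2, r_t^1, r_t^2\}]$ as the candidate reducible subgraph.

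I would then partition $V(H)$ as follows. For each distinct $3$-vertex $r \in \{r_3, \ldots, r_{k-2}\}$, form a middle part $H_r$ consisting of $r$ together with the (at most three, consecutive) conductive $v_i$'s with $r_i = r$. Form a boundary part $H_s$ containing $s, r_s^1, r_s^2$ and absorbing $v_1$ (and possibly $v_2$) precisely when some boundary $3$-neighbor coincides with $r_1$ (and possibly with $r_1 = r_2$); form $H_t$ symmetrically. Each middle part $H_r$ is an $\ell_H$-strong part by Lemma \ref{lem:strong-parts}(\ref{item:internal-petal}): it is a petal graph with $3$-vertex root $r$ and at most three conductive $4$-vertices on its path, and a short application of Lemma \ref{lem:far_neighbors} guarantees that the outside path neighbors of $H_r$ lie in two distinct other parts. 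Each boundary part $H_s$, whether it has $3$, $4$, or $5$ vertices, fits Lemma \ref{lem:strong-parts}(\ref{item:strong-end-piece}): I would take the boundary $3$-neighbor coinciding with $r_1$ (or simply $r_s^1$ in the generic case) as the root, the initial segment $(s, v_1, \ldots)$ of length $t \in \{1, 2, 3\}$ as the template's path, and the remaining boundary $3$-neighbor of $s$ as the extra vertex $r'$ attached to $s$; the stressed vertex $s$ together with any absorbed conductive $v_i$'s are $4$-vertices satisfying $\ell_H = \deg_H$, while the boundary $3$-neighbors satisfy $\ell_H = \deg_H + 1$, matching the template's requirements. The part $H_t$ is handled symmetrically.

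Finally, applying Lemma \ref{lem:orig_partition} with $b = 5$ and $d = 4$ (as every vertex of $H$ is a $3$- or $4$-vertex of $G$ and every part has at most five vertices) yields that $H$ is $(4, 20^{-4} \cdot 4^{-10})$-reducible, and since $20^{-4} \cdot 4^{-10} > 2^{-38} > 2^{-48} = \xi$, this shows $H$ is a $(4, \xi)$-reducible induced subgraph of $G$, contradicting our standing assumption on $G$. The main obstacle in this plan is verifying the strong-part status of the boundary parts $H_s$ and $H_t$ across all degenerate absorption configurations allowed by coincidences of boundary $3$-neighbors with $r_1$ or $r_k$; this is handled uniformly through Lemma \ref{lem:strong-parts}(\ref{item:strong-end-piece}), whose path-parameter range $t \in \{1, 2, 3\}$ covers precisely the absorption patterns permitted by Lemma \ref{lem:far_neighbors}, and the bound $k \geq 7$ ensures that the boundary degeneracies at the two ends never collide.
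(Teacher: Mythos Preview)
Your approach is essentially the paper's (shortest conductive path, Lemma~\ref{lem:close-stressed} for $|V(P)|\ge 9$, petal-type partition, then Lemma~\ref{lem:orig_partition} with $b=5$, $d=4$), but there is a genuine gap. Your argument that $P$ is induced only covers chords incident to a conductive vertex; it does not rule out the chord $st$ when $st$ is an \emph{insulated} edge. This occurs exactly when $s$ and $t$ are the two $4$-vertices of a special $K_4$, and then $s$ and $t$ share both of their $3$-neighbors. Your claim that ``$s,t$ share no $3$-neighbor'' invokes Lemma~\ref{lem:far_neighbors}, which requires the conductive path to be induced, so the reasoning is circular here. In this case your boundary parts $H_s$ and $H_t$ would contain the same two $3$-vertices and hence overlap, so the partition collapses. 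The paper handles this separately: when the endpoints are joined by an insulated edge, $A=G[V(P)]$ is a cycle with that single insulated edge, and one takes the special $K_4$ itself as a strong part via Lemma~\ref{lem:strong-parts}~(\ref{item:special-K4}), with all remaining parts being petals as in your middle parts.

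A minor secondary issue: your middle parts are indexed only by $r\in\{r_3,\dots,r_{k-2}\}$, which can leave $v_1$ (and $r_1$) unassigned when $r_1\notin\{r_s^1,r_s^2\}$ and $r_1\ne r_3$. The fix is to let middle parts range over all distinct $r_i$ not already absorbed into a boundary part; the paper achieves this by defining the end pieces as $H_u=H[N_P[R_u]]$ (so they automatically swallow every path vertex reached by a $3$-neighbor of $u$) and the interior pieces as $H_r=H[N_A[r]]$ for each remaining $r$.
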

\begin{proof}
    Suppose that the lemma is false. Choose a triple $(P,u,v)$ consisting of a
    conductive
    path $P$ with stressed endpoints $u$ and $v$ for which $P$ is as short as possible. 
    By Lemma \ref{lem:close-stressed}, $|V(P)| \geq 9$.
    We write $A = G[V(P)]$.
    By the minimality of $|V(P)|$, 
    $A$ is either a conductive path or a cycle with a single insulated edge $uv$.
    Furthermore, no internal vertex of $P$ is stressed.
    We write $R$ for the set of $3$-neighbors of $A$.
    As each vertex in $A$ has at least one 
    neighbor in $R$ and each vertex in $R$ has at most $3$ neighbors in $A$, $|R| \geq \frac{1}{3}|V(A)| \geq 3$.
    We let $H = G[V(A) \cup R]$, and we 
    observe that each vertex in $H$ is a $4^-$-vertex. We
    will apply Lemma \ref{lem:orig_partition} to $H$ with $d = 4$ and $b = 5$.

    We aim to construct a subgraph partition of $H$.
    We write $R'$ for the set of vertices
    $r \in R$ which are adjacent to neither $u$ nor $v$.
    For each vertex $r \in R'$,
    we define the subgraph
    $H_r = H[N_A[r]]$.
    By Lemma \ref{lem:far_neighbors}, $N_A(r)$ is a connected subpath of $P$, and as $r$ is a $3$-vertex, $|N_A(r)| \leq 3$.
    Then, we consider two cases.
    \begin{enumerate}
        \item Suppose that some vertex $r \in R$ is adjacent to both $u$ and $v$. As $|V(A)| \geq 9$, $r$ is not adjacent to all of $V(A)$; therefore, Lemma \ref{lem:far_neighbors} implies that $P$ is not induced. Hence, as observed previously, $A$ is an induced cycle with a single insulated edge $uv$, and thus $u$ and $v$ belong to a special $K_4$, which we call $H_K$. 
        By Lemma \ref{lem:d-2}, each of $u$ and $v$ has at most two $3$-neighbors, 
        so each $3$-neighbor of $u$ and $v$ belongs to $H_K$, and thus $r \in V(H_K)$.
        \item On the other hand, suppose that no vertex $r \in R$ is adjacent to both $u$ and $v$. We write $R_u = N(u) \cap R$ and $R_v = N(v) \cap R$. Then, we define subgraphs $H_u = N_P[R_u]$ and $H_v = N_P[R_v]$.
    \end{enumerate}

    We define a subgraph partition $\mathcal H$ of $H$ depending on the case that we follow above. If we follow Case (1), then we define $\mathcal H = \{H_r: r \in R'\} \cup \{H_K\}$. If we follow Case (2), then we define $\mathcal H = \{H_r: r \in R'\} \cup \{H_u, H_v\}$. As each internal vertex of $P$ is conductive, the parts of $\mathcal H$ are disjoint.

    We claim that for each $H' \in \mathcal H$, $A[V(A) \cap V(H')]$ is a path of at most three vertices. If $H' = H_r$ for some $r \in R'$, then this statement holds by Lemma \ref{lem:far_neighbors}.
    Next, if $H' = H_K$ as in Case (1) above, then $A$ is a cycle, and $A[V(A) \cap V(H_K)]$ is the path $(u,v)$.
    Finally, suppose that we follow Case (2) above and that $H' = H_u$.
    In Case (2), $A$ is a path, so that $A = P$.
    Let $a \in V(A)$ be the vertex of $N_P(R_u)$ whose distance in $P$ from $u$ is greatest, and write $P_{au}$ for the subpath of $P$ with endpoints $a$ and $u$. As some $3$-vertex $r \in R_u$ has both $u$ and $a$ as neighbors, 
    and as all vertices of $P_{au}$ are conductive, it follows from Lemma \ref{lem:far_neighbors} that
    the vertices of $N_P(R_u)$ form the vertex set $V(P_{au})$.
    Furthermore, as $r$ is a $3$-vertex, $|V(P_{au})| \leq 3$.
    Therefore, $A[V(A) \cap V(H_u)]$ is a path with at most $3$ vertices.
    Symmetrically, $A[V(A) \cap V(H_v)]$ is a path with at most $3$ vertices.
    Thus, each part $H' \in \mathcal H$ intersects $A$ in a path of at most $3$ vertices. Hence, for each $H' \in \mathcal H$ and pair $w,w' \in V(H')$,
    $\dist_A(w,w') \leq 2$.

    We claim that each part of $\mathcal H$ is an $\ell_H$-strong part on at most five vertices.
    For each $r \in R'$,
    as $H_r$ intersects $A$ in a path of at most $3$ vertices, the endpoints of $A[V(H_r) \cap V(A)]$ have two neighbors in $A$ whose distance in $A \setminus H_r$ is at least $5$. Hence, the endpoints of $A[V(H_r) \cap V(A)]$ have neighbors in two distinct parts of $\mathcal H -H_r$, and thus    
    $H_r$ is an $\ell_H$-strong part by Lemma \ref{lem:strong-parts} (\ref{item:internal-petal}). Additionally, $|V(H_r)| = 1 + \deg_A(r) \leq 4$.
    In Case (1), $H_K$ is an  $\ell_H$-strong part on four vertices by Lemma \ref{lem:strong-parts} (\ref{item:special-K4}), and $|V(H_K)| = 4$.
    In Case (2), $H_u$ and $H_v$ are $\ell_H$-strong parts by Lemma \ref{lem:strong-parts} (\ref{item:strong-end-piece}). Furthermore, for some $r \in V(H_u)$,
    $|V(H_u)| \leq 1 + |N_A[r]| = 2 + |N_A(r)| \leq 5$;
    similarly, $|V(H_v)| \leq 5$.
    Therefore, each part of $\mathcal H$ is an $\ell_H$-strong part on at most five vertices.
    
    By applying Lemma \ref{lem:orig_partition} with $b = 5$ and $d = 4$, $H$ is $(4,(20)^{-4}4^{-10})$-reducible.
    As  $\xi < 20^{-4}4^{-10}$, $H$ is an induced $(4,\xi)$-reducible subgraph of $G$,
    a contradiction. Thus, the lemma holds.
\end{proof}

\begin{figure}

\begin{center}
\begin{tikzpicture}
[scale=1.2,auto=left,every node/.style={circle,fill=gray!30,minimum size = 6pt,inner sep=0pt}]

\draw [draw=gray!40,fill=gray!40,very thick] (-0.25,-1.25) rectangle (3.25,1.25);
\draw [draw=gray!40,fill=gray!40,very thick] (3.75,-1.25) rectangle (4.25,1.25);
\draw [draw=gray!40,fill=gray!40,very thick] (4.75,-1.25) rectangle (9.25,1.25);

\node(z) at (1.5,1.8) [draw=white,fill=white,minimum size=0pt,inner sep=0pt] {$A$};
\node(z) at (4,1.8) [draw=white,fill=white,minimum size=0pt,inner sep=0pt] {$B$};
\node(z) at (7,1.8) [draw=white,fill=white,minimum size=0pt,inner sep=0pt] {$C$};

\node(z) at (-.75,1) [draw=white,fill=white,minimum size=0pt,inner sep=0pt] {$r$};
\node(z) at (-.75,-1) [draw=white,fill=white,minimum size=0pt,inner sep=0pt] {$s$};
\node(z) at (9.75,1) [draw=white,fill=white,minimum size=0pt,inner sep=0pt] {$t$};
\node(z) at (9.75,-1) [draw=white,fill=white,minimum size=0pt,inner sep=0pt] {$u$};

\node(p1) at (0,0) [draw = black] {};
\node(p2) at (1,0) [draw = black] {};
\node(p3) at (2,0) [draw = black] {};
\node(p4) at (3,0) [draw = black] {};
\node(p5) at (4,0) [draw = black] {};
\node(p6) at (5,0) [draw = black] {};
\node(p7) at (6,0) [draw = black] {};
\node(p8) at (7,0) [draw = black] {};
\node(p9) at (8,0) [draw = black] {};
\node(p10) at (9,0) [draw = black] {};

\node(r1) at (-0.5,1) [draw = black,fill=black] {};
\node(r2) at (-0.5,-1) [draw = black,fill=black] {};
\node(r3) at (1.5,-0.5) [draw = black,fill=black] {};
\node(r4) at (7,1) [draw = black,fill=black] {};
\node(r5) at (9.5,1) [draw = black,fill=black] {};
\node(r6) at (9.5,-1) [draw = black,fill=black] {};

\draw [-] (r5) to [out=135,in=90,looseness=0.4] (p4) {};
\draw [-] (r2) to [out=0,in=270,looseness=0.5] (p6) {};
\draw [-] (r2) to [out=0,in=270,looseness=0.5] (p7) {};
\foreach \from/\to in {p1/p2,p2/p3,p3/p4,p4/p5,p5/p6,p6/p7,p7/p8,p8/p9,p9/p10,r3/p2,r3/p3,r1/p1,r2/p1,r4/p8,p10/r5,p10/r6,r1/r4,r5/p9}
    \draw (\from) -- (\to);
\end{tikzpicture} \\
\begin{tikzpicture}
[scale=1.2,auto=left,every node/.style={circle,fill=gray!30,minimum size = 6pt,inner sep=0pt}]

\node(z) at (4.35,0.5) [draw=white,fill=white,minimum size=0pt,inner sep=0pt] {$s_B$};
\node(z) at (4.35,1) [draw=white,fill=white,minimum size=0pt,inner sep=0pt] {$r_B$};
\node(z) at (4.35,-0.5) [draw=white,fill=white,minimum size=0pt,inner sep=0pt] {$t_B$};
\node(z) at (4.35,-1) [draw=white,fill=white,minimum size=0pt,inner sep=0pt] {$u_B$};


\node(z) at (1.5,2.7) [draw=white,fill=white,minimum size=0pt,inner sep=0pt] {};

\node(z) at (2.75,0.75) [draw=white,fill=white,minimum size=0pt,inner sep=0pt] {$t_A$};
\node(z) at (2.75,-0.75) [draw=white,fill=white,minimum size=0pt,inner sep=0pt] {$u_A$};
\node(z) at (-0.8,1) [draw=white,fill=white,minimum size=0pt,inner sep=0pt] {$r_A$};
\node(z) at (6.2,0.8) [draw=white,fill=white,minimum size=0pt,inner sep=0pt] {$r_C$};
\node(z) at (5.9,-0.9) [draw=white,fill=white,minimum size=0pt,inner sep=0pt] {$s_C$};
\node(z) at (-0.8,-1) [draw=white,fill=white,minimum size=0pt,inner sep=0pt] {$s_A$};

\node(z) at (9.8,1) [draw=white,fill=white,minimum size=0pt,inner sep=0pt] {$t_C$};
\node(z) at (9.8,-1) [draw=white,fill=white,minimum size=0pt,inner sep=0pt] {$u_C$};

\node(p1) at (0,0) [draw = black] {};
\node(p2) at (1,0) [draw = black] {};
\node(p3) at (2,0) [draw = black] {};
\node(p4) at (3,0) [draw = black] {};
\node(p5) at (4,0) [draw = black] {};
\node(p6) at (5,0) [draw = black] {};
\node(p7) at (6,0) [draw = black] {};
\node(p8) at (7,0) [draw = black] {};
\node(p9) at (8,0) [draw = black] {};
\node(p10) at (9,0) [draw = black] {};

\node(rA) at (-.5,1) [draw = black,fill=black] {};
\node(rB) at (4,0.5) [draw = black,fill=black] {};
\node(sB) at (4,1) [draw = black,fill=black] {};
\node(tB) at (4,-0.5) [draw = black,fill=black] {};
\node(uB) at (4,-1) [draw = black,fill=black] {};
\node(rC) at (6.5,1) [draw = black,fill=black] {};
\node(r2) at (-0.5,-1) [draw = black,fill=black] {};
\node(sC) at (5.5,-1) [draw = black,fill=black] {};
\node(r3) at (1.5,-0.5) [draw = black,fill=black] {};
\node(r4) at (7,1) [draw = black,fill=black] {};
\node(tC) at (9.5,1) [draw = black,fill=black] {};
\node(tA) at (3,1) [draw = black,fill=black] {};
\node(uA) at (3,-1) [draw = black,fill=black] {};
\node(r6) at (9.5,-1) [draw = black,fill=black] {};

\foreach \from/\to in {tA/p4,p1/p2,p2/p3,p3/p4,p4/p5,p5/p6,p6/p7,p7/p8,p8/p9,p9/p10,r3/p2,r3/p3,rA/p1,r2/p1,r4/p8,p10/r5,p10/r6,r5/p9,rC/r4,sC/p7,sC/p6}
    \draw (\from) -- (\to);
\end{tikzpicture} 
\end{center}
\caption{The upper figure shows a subgraph $H$ of $G$. The six dark vertices are $3$-vertices, and the light vertices are $4^+$-vertices. We define a set $X = \{r,s,t,u\}$, as well as a subgraph partition $\mathcal B = \{A,B,C\}$ of $H \setminus X$. The lower figure shows the graph $\spl(H,X,\mathcal B)$. In Lemma \ref{lem:at-most-one-stressed}, we also show that $(H,X,\mathcal B)$ satisfies the assumptions of Lemma \ref{lem:crossing-over}.}
\label{fig:split}
\end{figure}
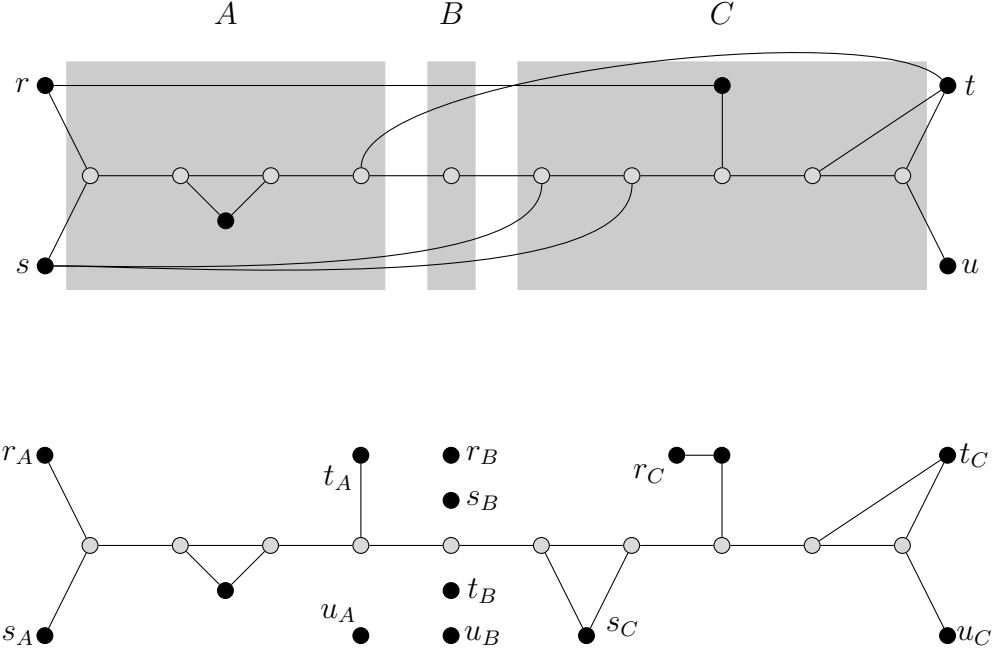

Next, we introduce the notion of a \emph{split graph}, which will help us use Lemma \ref{lem:orig_partition} to rule out more types of subgraphs of $G$.
\begin{definition}
    Let $H$ be an induced subgraph of $G$, let $X \subseteq V(H)$, and let $\mathcal{B}$ be a 
    subgraph partition
    of $H \setminus X$. We define $\spl(H, X, \mathcal{B})$ using the following procedure.
    \begin{enumerate}
        \item Let $H'$ be a copy of $H \setminus X$.
        \item For each $r \in X$ and $B \in \mathcal{B}$, add a vertex $r_B$ to $H'$
        with neighborhood $N_B(r)$.
        \item Define $\spl(H, X, \mathcal{B}) := H'$. 
    \end{enumerate}
    Furthermore, 
    define $\ell_{\spl} \colon V(\spl(H, X, \mathcal{B})) \rightarrow \mathbb Z$ with the following properties: 
    \begin{enumerate}
        \item For each $v \notin X$, $\ell_{\spl}(v) = \ell_H(v)$;

        \item For each $r \in X$ and $B \in \mathcal{B}$, $\ell_{\spl}(r_B) = \ell_H(r)$.
    \end{enumerate}
    Finally, given a list assignment $L:V(H) \rightarrow 2^{\mathbb N}$, we define a list assignment $L_{\spl}:V(\spl(H,X,\mathcal B)) \rightarrow 2^{\mathbb N}$ as follows:
    \begin{enumerate}
        \item For each $v \not \in X$,  $L_{\spl}(v) = L(v)$;
        \item For each $r \in X$ and $B \in \mathcal B$, $L_{\spl}(r_B) = L(r)$.
    \end{enumerate}

\end{definition}

For an example of a graph $\spl(H, X, \mathcal B)$, see Figure \ref{fig:split}.
The following lemma shows that under certain conditions, if $H$ is an induced subgraph of $G$ and $\spl(H,X,\mathcal B)$ is reductive, then $H$ is reducible.

\begin{lemma}
\label{lem:crossing-over}
    Let $H$ be an induced subgraph of $G$. Let $R \subseteq \{r \in V(H) \colon \deg_G(r) = 3\}$,
    and let $m$ and $q$ be fixed positive integers.
    Suppose there exists a subset $X \subseteq R$, as well as a subgraph partition $\mathcal{B}$ of $H \setminus X$ 
    such that the following hold: 
    \begin{enumerate}
        \item \label{item:X-leq-q-and-B-leq-m} $|X| \leq q$, and 
        $|\mathcal{B}| \leq m$

        \item \label{item:r-neighbor}
        Each vertex $r \in R$ has a neighbor in $V(H) \setminus R$;

        \item \label{item:color-branch} For each $r \in X$ and  $B \in \mathcal{B}$, 
        $H \setminus (V(B) \cup \{r\})$ is $\ell_{H \setminus (V(B) \cup \{r\})}$-choosable;

        \item \label{item:r1r2} For each pair $r_1, r_2 \in X$, $H \setminus  \{r_1, r_2\}$ is $\ell_{H \setminus \{r_1, r_2\}}$-choosable. 

        \item \label{item:split-coloring} $\spl(H, X, \mathcal{B})$ is 
        $(\ell_{\spl}, 4, \gamma)$-reductive;
    \end{enumerate}
    Then $H$ is $(4,\alpha)$-reducible for the constant $\alpha = \min\{ \frac{\gamma}{3qm}, 
    \frac{1}{12q^2} \}$.
\end{lemma}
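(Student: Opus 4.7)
The plan is to construct a distribution on $L$-colorings of $H$ satisfying (FIX) and (FORB) with probability at least $\alpha$ by mixing three procedures $P_1, P_2, P_3$, each invoked with probability $\tfrac{1}{3}$. The three procedures are designed to use conditions (5), (4), and (3) respectively, so that each type of (FIX)/(FORB) requirement is handled by at least one procedure.

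Procedure $P_1$ will sample $\psi$ from the $(L_{\spl}, 4, \gamma)$-reductive distribution on $\spl(H, X, \mathcal{B})$ given by condition (5), set $\phi(v) = \psi(v)$ for each $v \in V(H) \setminus X$, and then greedily extend to $H[X]$ via Lemma \ref{lem:ERT}. The extension is possible because each $r \in X \subseteq R$ has $\deg_G(r) = 3$, so $|L(r)| = \ell_H(r) = 1 + \deg_H(r)$; after coloring the neighbors of $r$ in $V(H) \setminus X$, at least $1 + \deg_{H[X]}(r) > \deg_{H[X]}(r)$ colors remain at $r$. This procedure inherits (FIX) at $v \in V(H) \setminus X$ and (FORB) at pairs in $(V(H) \setminus X)^2$ directly from $\psi$, giving probability at least $\gamma$ per requirement.

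Procedure $P_2$ will pick an ordered pair $(r_1, r_2) \in X^2$ uniformly at random, pick $\phi(r_1) \in L(r_1)$ and $\phi(r_2) \in L(r_2)$ independently and uniformly (resampling to enforce $\phi(r_1) \neq \phi(r_2)$ when $r_1 r_2 \in E(H)$, which is possible since condition (2) forces $|L(r_i)| \geq 3$ in that case), and then extend via condition (4). Since $|L(r_i)| \leq 4$, we get $\Pr(\phi(r_i) \neq c) \geq \tfrac{1}{2}$ for each $i$, and the pair achieves (FORB) with probability at least $1/(4q^2)$. Procedure $P_3$ will pick $r^* \in X$ and $B^* \in \mathcal{B}$ independently and uniformly at random, sample $\psi$ from the $\spl$ distribution, set $\phi(v) = \psi(v)$ for $v \in V(B^*)$ and $\phi(r^*) = \psi(r^*_{B^*})$, and then extend via condition (3). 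Properness on $V(B^*) \cup \{r^*\}$ will follow from the correspondence $r^* \leftrightarrow r^*_{B^*}$ in $\spl$; the residual lists at each $v \in V(H) \setminus (V(B^*) \cup \{r^*\})$ will have size at least $\ell_{H \setminus (V(B^*) \cup \{r^*\})}(v)$, so the extension is guaranteed by condition (3). Choosing $r^* = v$ yields (FIX) at $v \in X$ with probability at least $\gamma/q$ (averaging over $B^*$ uses that $\Pr(\psi(v_{B}) = c) \geq \gamma$ for each $B$), and choosing $r^* = r$ and $B^* = B(u)$ for a mixed pair $(r, u)$ with $r \in X$ and $u \notin X$ yields (FORB) with probability at least $\gamma/(qm)$ via (FORB) on $\{r^*_{B^*}, u\}$ in $\spl$.

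Combining the three procedures with weights $\tfrac{1}{3}$ each, the worst-case bounds are $\gamma/(3qm)$ (from mixed (FORB) via $P_3$) and $1/(12q^2)$ (from $X^2$ (FORB) via $P_2$), yielding $\alpha = \min\{\gamma/(3qm), 1/(12q^2)\}$. The main obstacle will be the properness verification in each procedure, especially in $P_3$, where one must carefully check that the partial coloring derived from $\psi$ on $V(B^*) \cup \{r^*\}$ respects all adjacencies in $H$ (using the isomorphism between $\spl[V(B^*) \cup \{r^*_{B^*}\}]$ and $H[V(B^*) \cup \{r^*\}]$) and that the residual list sizes at the remaining vertices match those required to apply condition (3).
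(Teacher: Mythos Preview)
Your proposal is correct and follows essentially the same approach as the paper: a uniform mixture of three procedures, one using the split distribution directly on $V(H)\setminus X$ (your $P_1$, the paper's $\rho=1$), one sampling an $(r,B)$ pair and transferring $\psi(r_B)$ along with $\psi|_B$ before extending via condition (3) (your $P_3$, the paper's $\rho=2$), and one coloring a random pair from $X$ first before extending via condition (4) (your $P_2$, the paper's $\rho=3$). The only cosmetic differences are that the paper handles (FIX) at $v\in X$ through the $\rho=3$ branch (yielding $1/(12q)$) rather than through your $P_3$ branch (yielding $\gamma/(3q)$), and that the paper colors $r_1,r_2$ sequentially rather than by independent sampling with resampling; both variants give the stated bound.
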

\begin{proof}
    Fix an $\ell_H$-assignment $L$ on $H$.
        We write $S = \spl(H,X,\mathcal B)$.
    We observe that $L_{\spl}$ is an $\ell_{\spl}$-assignment on $S$.
    Therefore, 
    as $S$ is $(\ell_{\spl},4,\gamma)$-reductive, 
    there exists a distribution on $L_{\spl}$-colorings of $S$ that witnesses (FIX) and (FORB) for $k = 4$ with probability constant $\gamma$.

    We utilize the following procedure to obtain a random $L$-coloring $\phi$ of $H$. 
    First, we select a number $\rho \in \{1, 2, 3\}$ uniformly at random.
    Then, we proceed as follows based on the value of $\rho$.

    \begin{enumerate}
    \item If $\rho = 1 $, we randomly sample an $L_{\spl}$-coloring $\psi$ of $S$ using the distribution described above.
    Then, we define $\phi$ on $H$ as follows. First, for each $v \in V(H) \setminus X$, we let $\phi(v) := \psi(v)$. Then, using the fact that 
    each $3$-vertex $r \in V(H)$ satisfies 
    $|L(r)| = \ell_H(r) > \deg_H(r)$, 
    we greedily extend $\phi$ to $X$.

    \item If $\rho = 2$, we again sample an $L_{\spl}$-coloring $\psi$ of $S$ from the distribution described above.
    Then, we select $r \in X$ and $B \in \mathcal{B}$ uniformly at random. We let $\phi(r) := \psi(r_B)$. Afterward, for each $v \in V(B)$, we let $\phi(v): = \psi(v)$. Finally, using property (\ref{item:color-branch}), we extend $\phi$ to $H \setminus (V(B) \cup \{r\})$. 

    \item Finally, if $\rho = 3$, we choose distinct $r_1, r_2 \in X$ uniformly at random. First, we assign $\phi(r_1)$ uniformly at random from $L(r_1)$. Then, we assign $\phi(r_2)$ uniformly at random from the available colors of $L(r_2)$. Finally, using property (\ref{item:r1r2}), we extend $\phi$ to $H \setminus \{r_1, r_2\}$. 
    \end{enumerate}

    We now show that the distribution produced by this random coloring procedure satisfies (FIX) and (FORB) with a probability constant $\alpha = \min\{ \frac{\gamma}{3qm}, 
    \frac{1}{12q^2} \}$.
    
    First, we show that (FIX) is satisfied. Consider a vertex  $v \in V(H)$ and a color $c \in L(v)$. We consider two cases: $v \in X$ and $v \notin X$. First, suppose that $v \in X$. With probability $\frac 13$, $\rho = 3$. Then, with conditional probability at least $\frac 1q$, $v =  r_1$. Then, with further conditional probability at least $\frac 14$, $c$ is chosen for $\phi(r_1)$. Hence, with probability at least $\frac{1}{12q}$, $\phi(v) = c$. On the other hand, suppose that $v \notin X$. Then, with probability $\frac 13$, $\rho = 1$. Then, since $S$ is $(\ell_{\spl}, 4, \gamma)$-reductive,
    $\psi(v) = c$ with probability at least $\gamma$. Hence, $\phi(v) = \psi(v) = c$ with conditional probability $\gamma$. So, with probability at least $ \frac{\gamma}{3}$, $\phi(v) = c$. As $\alpha < \min \{ \frac{1}{12q} , \frac{\gamma}{3} \}$, (FIX) is satisfied with a probability constant $\alpha$.

    Now, we show that (FORB) is satisfied. Fix $u, v \in V(H)$ and $c \in L(u) \cup L(v)$. 
    We consider three cases.
    \begin{enumerate}
    \item 
    First, suppose that $u, v \notin X$. With probability $\frac 13$, $\rho = 1$. Then, because $S$ is  $(\ell_{\spl}, 4, \gamma)$-reductive,
    when we produce a random $L_{\spl}$-coloring $\psi$ of   $S$,
    $\Pr(\psi(u) \neq c \land \psi(v) \neq c) \geq \gamma$. So with conditional probability at least $\gamma$, $\phi(u) = \psi(u) \neq c$ and $\phi(v) = \psi(v) \neq c$. Hence with probability at least $\frac{\gamma}{3}$,  $c \not \in \{\phi(u), \phi(v)\}$.
    \item Next, suppose that
    $u \in X$ and $v \notin X$.
    With probability $\frac 13$, $\rho = 2$. Then, with conditional probability at least $\frac{1}{qm}$, 
    the randomly selected vertex $r \in X$ is $u$, and $v$ is contained in the randomly selected subgraph $B \in \mathcal B$. Because $S$ is $(\ell_{\spl},4,\gamma)$-reductive, 
    when we produce a random $L_{\spl}$-coloring $\psi$ of   $S$,
    $\Pr(\psi(u_B) \neq c \land \psi(v) \neq c) \geq \gamma$. Hence, with further conditional probability at least $\gamma$, $\phi(u) = \psi(u_B) \neq c$ and $\phi(v) = \psi(v) \neq c$. Overall, with probability at least $\frac{\gamma}{3qm}$, $c \not \in \{\phi(u), \phi(v)\}$.
    \item Finally, suppose that $u, v \in X$. With probability $\frac 13$, $\rho = 3$. Then, with conditional probability at least $\frac{1}{q^2}$, $u = r_1$ and $v = r_2$. Since $u$ is adjacent to at least one vertex in $V(H) \setminus R$, $|L(u)| \geq 2$. Similarly, at least two colors are available in $L(v)$ when $v$ is colored. Hence, with further conditional probability at least $\frac 1{2^2} $, $c$ is avoided at $u$ and $v$. Overall, with probability at least $\frac {1}{12q^2}$, $c \not \in \{\phi(u), \phi(v)\}$.
    \end{enumerate}

    Overall, our distribution on $L$-colorings $\phi$ of $H$ satisfies (FIX) and (FORB) for $k = 4$ and a probability constant of $\alpha = \min\{ \frac{\gamma}{3qm}, 
    \frac{1}{12q^2} \}$.
\end{proof}

The next lemma shows that
in a split graph, a single copy of a $3$-vertex can often be strong part in a vertex partition.

\begin{lemma}
\label{lem:hanging-red}
    Let $H$ be an induced subgraph of $G$.
    Let $X \subseteq V(H)$, and let $\mathcal B$ be a subgraph partition of $H \setminus X$.
    Let $S = \spl(H,X,\mathcal B)$.
    Let $r \in X$ 
    and $B \in \mathcal B$. 
    Suppose that $r$
    has a neighbor $w \in V(H) \setminus X$ and that $w \not \in N_S(r_B)$.
    If $\mathcal S$ is a subgraph partition of $S$ and the $K_1$ consisting of $r_B$ is an element of $\mathcal S$, then $r_B$ is an $\ell_{\spl}$-strong part of $\mathcal S$.
\end{lemma}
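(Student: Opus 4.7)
The plan is to directly verify the three conditions of Definition~\ref{def:strong-part} for the singleton part $\{r_B\}$ of $\mathcal S$. The key calculation is a lower bound on the list size available at $r_B$, and this bound uses crucially that $r$ is a $3$-vertex (which holds in the intended application, since Lemma~\ref{lem:crossing-over} is applied with $X \subseteq R \subseteq \{v \in V(H) : \deg_G(v) = 3\}$).

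First I would locate $N_S(r_B)$. By the definition of $\spl$, we have $N_S(r_B) = N_B(r) \subseteq V(B)$. The hypothesis supplies a neighbor $w$ of $r$ in $V(H) \setminus X$ with $w \notin N_S(r_B)$; since $V(H) \setminus X$ is the disjoint union of the parts of $\mathcal B$, $w$ must lie in some part $B' \neq B$, hence $w \notin V(B)$. Therefore $|N_S(r_B)| = |N_B(r)| \leq \deg_H(r) - 1$. Combined with $\ell_{\spl}(r_B) = \ell_H(r) = 4 - \deg_G(r) + \deg_H(r) = 1 + \deg_H(r)$, this yields the central inequality
\[
\ell_{\spl}(r_B) - |N_S(r_B)| \;\geq\; (1 + \deg_H(r)) - (\deg_H(r) - 1) \;=\; 2.
\]

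Then I would verify each of the three strong-part conditions. For condition~(\ref{item:forb2}), any $\ell_{\spl}$-assignment $L$ on $\{r_B\}$ gives a list of size $\ell_{\spl}(r_B) \geq 2$ at $r_B$; as $r_B$ has no neighbor inside its singleton part, (FIX') is immediate, and (FORB-2) reduces to selecting a color distinct from a fixed $c$, which is possible since $|L(r_B)| \geq 2$. For condition~(\ref{item:strong-forb1}), for any other part $H_j \in \mathcal S - \{r_B\}$ and any $\ell_{\spl}^{H_j}$-assignment $L$ on $\{r_B\}$, the list size is $\ell_{\spl}(r_B) - |N_S(r_B) \cap V(H_j)| \geq \ell_{\spl}(r_B) - |N_S(r_B)| \geq 2$, which ensures (FORB-1). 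For condition~(\ref{item:color-last}), the list size at $r_B$ under an $\ell_{\spl}^{S \setminus r_B}$-assignment is again at least $2 \geq 1$, which is enough to color the isolated vertex $r_B$.

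The only real obstacle is keeping the list-size bookkeeping straight: all $S$-neighbors of $r_B$ lie in $V(B)$ by construction, while the hypothesis supplies a neighbor $w$ of $r$ in $V(H) \setminus V(B)$ that was \emph{not} split off into $r_B$. The $+1$ slack in $\ell_H(r)$ from $r$ being a $3$-vertex, together with this $-1$ reduction in $|N_S(r_B)|$ relative to $\deg_H(r)$, is precisely what makes $\ell_{\spl}(r_B) - |N_S(r_B)| \geq 2$ and hence suffices uniformly for all three strong-part conditions.
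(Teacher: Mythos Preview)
Your proof is correct and follows essentially the same approach as the paper's: both arguments first establish $\ell_{\spl}(r_B) - \deg_S(r_B) \geq 2$ from the missing neighbor $w$ together with the fact that $r$ is a $3$-vertex, and then verify the three strong-part conditions by direct list-size computations. You also rightly flag that the $3$-vertex assumption on $r$ is used implicitly (it is not stated in the lemma hypotheses but holds in every application), which the paper likewise invokes without comment.
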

\begin{proof}
    As $w \not \in N_S(r_B)$, it follows that $\deg_H(r) \geq \deg_S(r_B) + 1$. As $r$ is a $3$-vertex, 
    \[\ell_{\spl}(r_B) = \ell_H(r) = \deg_{H}(r) + 1 \geq  \deg_S(r_B) + 2.
    \]  
    Now, 
    we show that $r_B$ is an $\mathcal S$-strong part as follows.
    \begin{enumerate}
    \item Let $L$ be an $\ell_{\spl}$-assignment on $r_B$.
    As $\ell_{\spl}(r_B) \geq 2$, (FIX') and (FORB-$2$) hold.
    \item Next, let $S_j \in \mathcal S$ be distinct from $r_B$, and let $L$ be an 
    $\ell_{\spl}^{ S_j}$-assignment $L$ on $r_B$.
    As $S_j \subseteq S - r_B$, it holds that
    \[
        \ell_{\spl}^{S_j}(r_B) \geq \ell_{\spl}^{S - r_B}(r_B) = \ell_{\spl}(r_B) - \deg_S(r_B) \geq 2.
    \]
    Thus, (FORB-$1$) holds.
    \item Finally, let 
    $L$ be an $\ell_{\spl}^{S - r_B}$-assignment on $r_B$. As 
    $\ell_{\spl}^{S- r_B}(r_B)
    = 
    \ell_{\spl}(r_B)- \deg_{S}(r_B) \geq 2$, $r_B$ is $L$-colorable. 
    \end{enumerate}
    Therefore,  $r_B$ is an $\ell_{\spl}$-strong part of $\mathcal S$. 
\end{proof}

\begin{lemma}
\label{lem:434-path}
No two stressed vertices $u,v \in V(G)$ are joined by a path of conductive edges whose internal vertex set
consists of conductive vertices and one $3$-vertex $x$ for which $x \not \in N(u) \cup N(v)$.
\end{lemma}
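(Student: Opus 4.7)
The plan is to adapt the argument of Lemma \ref{lem:con-stressed}, using the split graph machinery of Lemma \ref{lem:crossing-over} to accommodate the single internal $3$-vertex $x$. Suppose the lemma fails and take a counterexample $(P,u,v)$ with $|V(P)|$ minimum, where $x$ is the interior $3$-vertex of $P$. Lemma \ref{lem:close-stressed} gives $|V(P)| \geq 9$, and Lemma \ref{lem:con-stressed} ensures that $x$ really must appear (otherwise $P$ is purely conductive between stressed endpoints). The minimality of $P$ ensures that $A := G[V(P)]$ is an induced path. Let $R$ denote the set of $3$-vertices in $V(G) \setminus V(A)$ adjacent to $V(A)$, and put $H := G[V(A) \cup R]$. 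The goal is to show that $H$ is $(4,\xi)$-reducible, contradicting the hypothesis on $G$.

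To accomplish this, I would apply Lemma \ref{lem:crossing-over} with $X = \{x\}$, $q = 1$, and subgraph partition $\mathcal{B} = \{B_L,B_R\}$ ($m = 2$), where $B_L$ (resp.\ $B_R$) is the subgraph induced by the $u$-side (resp.\ $v$-side) of $A - x$ together with the incident vertices of $R$. Conditions (1) and (2) of Lemma \ref{lem:crossing-over} are immediate: $x$ has conductive $4$-neighbors on $A$, which lie outside $R$. For conditions (3) and (4), each residual subgraph still contains a stressed endpoint with both of its $3$-neighbors intact, supplying enough slack that Lemma \ref{lem:ERT} yields $\ell$-choosability.

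The central step is condition (5): showing that the split graph $S := \spl(H,\{x\},\mathcal{B})$ is $(\ell_{\spl},4,\gamma)$-reductive with $\gamma = 20^{-4} \cdot 4^{-10}$. The graph $S$ consists of two components, each a conductive path from a stressed endpoint ($u$ or $v$) to the corresponding $x$-copy ($x_{B_L}$ or $x_{B_R}$), decorated with $3$-neighbors. I would partition $S$ exactly as in the proof of Lemma \ref{lem:con-stressed}: an end-piece at each stressed endpoint (strong by Lemma \ref{lem:strong-parts}(\ref{item:strong-end-piece})), a petal graph $S[N_A[r]]$ for each interior $3$-vertex $r \in R$ (strong by Lemma \ref{lem:strong-parts}(\ref{item:internal-petal})), and the $K_1$ consisting of each $x$-copy (strong by Lemma \ref{lem:hanging-red}, since $x$ has a neighbor on the opposite side of the split which is absent from $N_S(x_{B_L})$ and $N_S(x_{B_R})$). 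Each part has at most $5$ vertices, so Lemma \ref{lem:orig_partition} with $b = 5$, $d = 4$ gives the required reductivity. Lemma \ref{lem:crossing-over} then yields that $H$ is $(4,\alpha)$-reducible with $\alpha = \min\{\gamma/6,\ 1/12\}$, and since $\gamma/6 > 2^{-48} = \xi$, this completes the contradiction.

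The principal obstacle is the bookkeeping around the third neighbor $y$ of $x$ (the one not on $A$). If $y$ is a $4^+$-vertex, then $y \notin V(H)$ and the analysis is clean. If $y \in R$, then $y$ is assigned to one of $B_L$ or $B_R$, and I must verify this placement does not disrupt the strong-part structure of $S$; in particular, Lemma \ref{lem:hanging-red} still applies to both $x$-copies because $x$ always retains a neighbor on the opposite side of the split. A similar routine check handles any $r \in R$ with neighbors on both sides of $x$: the minimality of $P$ forbids chords that would produce a shorter counterexample, so the natural side-based placement of each $r$ is well defined.
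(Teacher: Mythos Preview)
Your choice $X=\{x\}$ creates two genuine structural problems in the verification of condition~(\ref{item:split-coloring}).

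First, the partition of $S$ that you describe does not cover the two conductive $4$-vertices $x_1,x_2\in N_A(x)$. Each of $x_1,x_2$ has $x$ as its \emph{only} $3$-neighbour, so neither lies in any petal part $S[N_A[r]]$ with $r\in R$, neither lies in $S_u$ or $S_v$ (since $x\notin N(u)\cup N(v)$), and neither lies in the $K_1$ parts $\{x_{B_L}\},\{x_{B_R}\}$. Absorbing $x_1$ into a two-vertex part $\{x_{B_L},x_1\}$ does not help: after the single adjacent part containing the predecessor $x_1^-$ of $x_1$ on $P$ is coloured, only one colour remains at $x_1$, so (FORB-$1$) fails and the part is not $\ell_{\spl}$-strong. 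You would thus need a weak part on each side of $x$, but Lemma~\ref{lem:orig_partition} allows only one.

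Second, your claim that minimality forbids any $r\in R$ from having neighbours on both sides of $x$ is valid only for $r\notin N(u)\cup N(v)$. If $r$ is a $3$-neighbour of $u$ that is also adjacent to some conductive $p_2\in P_2-x$, the shortcut $(u,r,p_2,\dots,v)$ has its internal $3$-vertex adjacent to $u$, so it is not a shorter instance of the present lemma and yields no contradiction. Such crossovers also destroy the two-component picture of $S$ on which your partition relies.

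The paper resolves both issues at once by reversing the roles: it keeps $x$ unsplit and instead takes $X=N_R(\{u,v\})$ (with $|X|\le 4$ and $\mathcal B=\{B_1,B_2,x\}$). Then $x$ sits at the centre of the unique weak part $S_x=(x_1,x,x_2)$ supplied by Lemma~\ref{lem:weak-parts}(\ref{item:weak-path}), and it is precisely the crossover-prone $3$-neighbours of $u,v$ that get duplicated by the split, after which each copy lands cleanly on one side and Lemma~\ref{lem:hanging-red} disposes of the idle copies.
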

\begin{proof}
 Suppose that the lemma is false. Let $u,v \in V(G)$ be two stressed vertices which are joined by a path $P$ consisting of conductive edges
 whose internal vertex set
 consists of conductive vertices and a
 $3$-vertex $x \not \in N(u) \cup N(v)$.
We take $P$ to be as short as possible, so $A:=G[V(P)]$ is either a path or a cycle with exactly one insulated edge $uv$.
 We write $P' = P-x$, and we write $R$ for the set of $3$-vertices which are adjacent to $P'$, not including $x$.
By Lemma \ref{lem:close-stressed},
$|V(A)| \geq 9$.

    We write $P_1$ and $P_2$ for the two components of $P'$,
    so that $u \in P_1$ and $v \in P_2$. 
    We write $X = N_R(\{u,v\})$, and by Lemma \ref{lem:d-2}, $|X| \leq 4$.
    We claim that no $r \in R \setminus X$ has a neighbor in both $P_1$ and $P_2$.
    To this end, suppose that $r$ has neighbors $p_1$ and $p_2$ respectively in $P_1$ and $P_2$.
    Neither $p_1$ nor $p_2$ is adjacent to $x$, as otherwise $p_1$ or $p_2$ is stressed and thus belongs to $\{u,v\}$.
    Let $P_{up_1}$ be the subpath of $P$ from $u$ to $p_1$, and let $P_{p_2 v}$ be the subpath of $P$ from $p_2$ to $v$. Then, the path $(P_{u p_1}, r, P_{p_2 v})$ is shorter than $P$ and has an internal vertex set consisting of conductive vertices and exactly one $3$-vertex $r \not \in N(u) \cup N(v)$, contradicting the minimality of $P$.

        We write $H = G[V(A) \cup R]$.
        For $i \in \{1,2\}$, we write $B_i = H[N_R[P_i]] \setminus X$.
        We write $\mathcal B = \{B_1, B_2, x\}$ and observe that $\mathcal B$ is a subgraph partition of $H \setminus X$. Finally, we write $S = \spl(H,X,\mathcal B)$. 
    For each vertex $v \in V(H) \setminus X$, we identify the copy of $v$ in $S$ with the vertex $v \in V(H)$. 

    We aim to use Lemma \ref{lem:crossing-over} to show that $S$ is $(4,\alpha)$-reducible. To this end, we check the conditions of Lemma \ref{lem:crossing-over}.
    As $|X| \leq 4$ and $|\mathcal B| = 3$, (\ref{item:X-leq-q-and-B-leq-m}) holds with $q = 4$ and $m = 3$. By construction, each $r \in R$ has a neighbor in $V(H) \setminus R$, so (\ref{item:r-neighbor}) holds.

    Next, we show that (\ref{item:color-branch}) holds.
    For each $B \in \mathcal B$ and $r \in X$,
    write $H' = H \setminus (V(B) \cup \{r\})$.
    Each vertex $v \in V(H')$ is a $4^-$-vertex, so $\ell_{H'}(v) \geq \deg_{H'}(v)$.
    We also observe that each component of $H'$ contains an endpoint of $P$ with at least one $3$-neighbor $r' \in X - r$ satisfying $\ell_{H'}(r') > \deg_{H'}(r')$. Therefore,
    Lemma \ref{lem:ERT} implies that
    $H'$ is $\ell_{H'}$-choosable, and (\ref{item:color-branch}) holds.

    To show that (\ref{item:r1r2}) holds, consider a pair $r_1, r_2 \in X$, and define $H_{\ref{item:r1r2}} = H \setminus \{r_1, r_2\}$.
    We observe that $H_{\ref{item:r1r2}}$ is connected, as each pair of vertices in $H_{\ref{item:r1r2}}$ is joined by a path whose internal vertices belong to $P$.
    As before, $\ell_{H_{\ref{item:r1r2}}}(v) \geq \deg_{H_{\ref{item:r1r2}}}(v)$ for each $v \in V(H_{\ref{item:r1r2}})$, and furthermore $\ell_{H_{\ref{item:r1r2}}}(x) > \deg_{H_{\ref{item:r1r2}}}(x)$. Therefore,
    Lemma \ref{lem:ERT} implies that $H_{\ref{item:r1r2}}$ is $\ell_{H_{\ref{item:r1r2}}}$-choosable.

    We finally aim to show that (\ref{item:split-coloring}) holds.
    To this end, we 
    give $S = \spl(H,X,\mathcal B)$ a subgraph partition $\mathcal S$ in which all parts are $\ell_{\spl}$-weak, all parts but one are $\ell_{\spl}$-strong, and each part has at most $5$ vertices.

    Let $R' = (R \setminus X) \cup \{r_B: r \in X, B \in \mathcal B\}$; that is,
    $R'$ is the set of vertices in $S$ which correspond to $3$-vertices in $H$, excluding $x$. 
    For each $r \in R'$ which is not adjacent to $u$ or $v$, we define a subgraph $S_r = S[N_{P'}[r]]$. 
    We also define $R_u = N_S(u) \cap R'$, and we define $R_v = N_S(v) \cap R'$.
    We further define subgraphs $S_u = S[N_{P'}[R_u]]$ and $S_v = S[N_{P'}[R_v]]$.
    We further define a part $S_x = S[N_P[x]]$, and we observe that $S_x$ is a path on three vertices.
    We write 
    $\mathcal{S} = \{S_r \colon r \in R' \setminus (R_u \cup R_v)\} \cup \{S_u, S_v, S_x\}$, and we observe that $\mathcal S$ is a subgraph partition of $S$.
    By construction, no part of $\mathcal S \setminus \{S_x\}$ has a vertex in both $P_1$ and $P_2$.
    Hence, Lemma \ref{lem:far_neighbors} implies that each part of $\mathcal S$ intersects $A$ in a connected subgraph of $P$.
    By a similar argument to that of Lemma \ref{lem:con-stressed}, 
    each part of $\mathcal S$ intersects $A$ in a path of at most $3$ vertices. Therefore, for each part $S' \in \mathcal S$ and pair $w,w' \in V(S')$, $\dist_A(w,w') \leq 2$.

    We write $x_1, x_2$ for the endpoints of $S_x$, so that $x_1 \in V(P_1)$ and $x_2 \in V(P_2)$.
    We observe that $\ell_{\spl}(x_1) = \ell_{\spl}(x_2) = 2$,
    and $\ell_{\spl}(x) \geq  3$. 
    Furthermore, the neighbors of $S_x$ in $A$ have a distance of at least $5$ in $A$ and thus belong to distinct parts of $\mathcal S$.
    Therefore, Lemma 
    \ref{lem:weak-parts} 
    (\ref{item:weak-path}) 
    implies that $S_x$ is an 
    $\ell_{\spl}$-weak part of $\mathcal S$.
    Next, we consider a vertex
    $r \in R' \setminus (R_u \cup R_v)$
    with a neighbor in $P'$. 
    As $r$ corresponds to a $3$-vertex of $G$, 
    $\ell_{\spl}(r) \geq  \deg_H(r) + 1 \geq \deg_S(r) + 1$, and $\ell_{\spl}(v) \geq \deg_H(v) \geq  \deg_S(v)$ for each $v \in N_P(r)$.
    As $A[V(S_r) \cap V(A)]$ is a path of length at most $2$, the neighbors of $S_r$ in $A$ have a distance of at least $5$ in the graph $A \setminus S_r$ and hence belong to distinct parts of $\mathcal S$.
 Hence, $S_r$ is an $\ell_{\spl}$-strong part by Lemma 
 \ref{lem:strong-parts} (\ref{item:internal-petal}).
By a similar argument, $S_u$ and $S_v$ are $\ell_{\spl}$-strong parts of $\mathcal S$ described in Lemma \ref{lem:strong-parts} (\ref{item:strong-end-piece}).
    Finally, for each $r \in R' \setminus (R_u \cup R_v)$
    with no neighbor in $P'$, 
    $r$ is a copy of a vertex $r_0 \in R$ for which there exists a vertex $w \in V(H) \setminus X$ adjacent to $r_0$ and not to $r$. Therefore, Lemma \ref{lem:hanging-red} implies that $S_r = r$ is an $\ell_{\spl}$-strong part of $\mathcal S$.

    By applying Lemma \ref{lem:orig_partition}
    with $b=5$ and $d=4$, $S$ is $(\ell_{\spl},4,20^{-4}4^{-10})$-reductive. Hence, Lemma \ref{lem:crossing-over} implies that $H$ is $(4, 20^{-4}4^{-10}36^{-1})$-reducible.
    As $\xi < 20^{-4}4^{-10}36^{-1}$, we have a contradiction, and the proof is complete.
\end{proof}

\begin{lemma}
\label{lem:stressed-insulated}
    Each stressed vertex in $V(G)$ is conductively connected with an insulated vertex.
\end{lemma}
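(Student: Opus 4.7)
The plan is to argue by contradiction. Suppose $s$ is a stressed vertex with no conductive path to any insulated vertex; I will exhibit a $(4,\xi)$-reducible induced subgraph $H$ of $G$, contradicting the selection of $G$. Let $C$ consist of $s$ together with every conductive $4$-vertex of $G$ that is conductively connected to $s$, let $R$ be the set of $3$-vertices of $G$ adjacent to some vertex of $C$, and let $H = G[C\cup R]$, adjoined with a fourth vertex $u^*$ in the case that $s$ lies in a special $K_4$ (Case B); if $s$ lies in no special $K_4$, we are in Case A.

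The first task is to establish a closure property of $C$: for every $v\in C\setminus\{s\}$, all $4^+$-neighbors of $v$ in $G$ lie in $C$. Any such neighbor $u$ is conductively connected to $s$ by extending the existing conductive path from $s$ to $v$ through $v$, so $u$ is neither stressed (by Lemma \ref{lem:con-stressed}) nor insulated (by our standing assumption), leaving only the option that $u$ is conductive and hence in $C$. For $s$ itself, every $4^+$-neighbor is in $C$ except at most one stressed vertex $u^*$ joined to $s$ by an insulated edge in a special $K_4$; the uniqueness of $u^*$ follows from Lemma \ref{lem:d-2}, since the two $3$-neighbors of $s$ cannot together lie in two different special $K_4$'s. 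One also verifies that every edge of $G[C]$ is conductive, since a conductive $4$-vertex would become stressed if it lay in a special $K_4$.

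To show $H$ is $(4,\xi)$-reducible I would apply Lemma \ref{lem:orig_partition} with $d=4$ and part-size bound $b\leq 7$. Let $r_1,r_2$ denote $s$'s $3$-neighbors, and for each conductive $v\in C\setminus\{s\}$ let $r(v)$ be its unique $3$-neighbor. Designate one distinguished part
\[
P_s \;=\; \{s, r_1, r_2\}\cup\{v\in C\setminus\{s\} : r(v)\in\{r_1,r_2\}\},
\]
adjoined with $u^*$ in Case B, and for each $r\in R\setminus\{r_1,r_2\}$ take $P_r=\{r\}\cup\{v\in C\setminus\{s\} : r(v)=r\}$. Since each $3$-vertex in $R$ has at most two non-$s$ conductive $4$-neighbors, $|P_s|\leq 7$ and $|P_r|\leq 4$. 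Using Lemmas \ref{lem:cycle}, \ref{lem:triangle-block}, and \ref{lem:stressed-triangle}, each $P_r$ is essentially a petal graph rooted at $r$ with up to three conductive $4$-leaves, and Lemma \ref{lem:strong-parts}(\ref{item:internal-petal}) (with the adjacency condition verified via Lemma \ref{lem:far_neighbors}) shows $P_r$ is an $\ell_H$-strong part. The part $P_s$ is verified to be $\ell_H$-weak via Lemma \ref{lem:weak-parts} in Case A, and by combining Lemma \ref{lem:strong-parts}(\ref{item:special-K4}) with Lemma \ref{lem:weak-parts} in Case B. Since $(28)^{-4}\cdot 4^{-14}=2^{-36}\cdot 7^{-4}>2^{-48}=\xi$, Lemma \ref{lem:orig_partition} yields that $H$ is a $(4,\xi)$-reducible subgraph of $G$, the desired contradiction.

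The main obstacle lies in verifying the adjacency and structural conditions of Lemmas \ref{lem:strong-parts} and \ref{lem:weak-parts} across the various degenerate configurations---for example, when the three conductive $4$-neighbors of some $r\in R$ happen to be pairwise adjacent (yielding a $K_4$-shaped $P_r$ rather than a petal graph), or when both conductive $4$-neighbors of $s$ share $r_2$ as their $3$-neighbor, collapsing several prospective external parts of $P_s$ into one and breaking the two-distinct-external-parts condition of Lemma \ref{lem:strong-parts}(\ref{item:internal-petal}). In such edge cases I would absorb additional vertices into the offending part (keeping sizes at most $7$), split further using Lemma \ref{lem:crossing-over} with $X\subseteq\{r_1,r_2\}$, or appeal directly to Lemma \ref{lem:fix} for the ad hoc piece. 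Most of the technical work of the proof would sit in this routine but tedious case analysis.
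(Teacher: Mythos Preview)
Your approach tries to construct a reducible subgraph directly via the partition lemma, but the paper's proof is far shorter and avoids this entirely. The paper argues as follows: let $W$ be the set of conductive vertices conductively connected to $s$. Your own closure argument shows that every $w\in W$ has all of its $4^+$-neighbours in $W\cup\{s\}$; since a conductive vertex has three $4^+$-neighbours, this means $\deg_{G[W]}(w)\ge 2$ for every $w\in W$ (with equality exactly when $w\in N(s)$). Hence each terminal block of $G[W]$ is $2$-connected, so by Lemma~\ref{lem:triangle-block} it is a triangle, and its two non-cut vertices have degree $2$ in $G[W]$ and are therefore neighbours of $s$. But then the stressed vertex $s$ has two neighbours in a conductive triangle, contradicting Lemma~\ref{lem:stressed-triangle}. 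That is the entire proof.

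Compared with this, your proposal carries real gaps that you yourself flag. The assertion that each $P_r$ is ``essentially a petal graph'' is not correct as stated: the conductive vertices sharing a $3$-neighbour $r$ need not form a path in $G$ (they may be a triangle, giving $P_r\cong K_4$, or they may lie in different components of $G[C\setminus\{s\}]$, since conductive connectedness to $s$ does not make them conductively connected to one another, so Lemma~\ref{lem:far_neighbors} does not directly apply). Your fallback plan of absorbing extra vertices, invoking Lemma~\ref{lem:crossing-over}, or handling pieces ad~hoc with Lemma~\ref{lem:fix} is not worked out, and the structure of $G[C]$ (a near-cubic graph whose block-tree can be arbitrarily large) makes a complete case analysis genuinely nontrivial. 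The paper's insight is precisely that one does not need to build a reducible subgraph here at all: Lemmas~\ref{lem:triangle-block} and~\ref{lem:stressed-triangle} already encode enough reducibility to force the contradiction via pure block-structure reasoning.
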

\begin{proof}
    Let $v \in V(G)$ be a stressed vertex,
    and suppose that $v$ is not conductively connected with an insulated vertex.
    Let $W \subseteq V(G)$ be the set of conductive vertices with which $v$ is conductively connected. 
    As $v$ is a $4$-vertex with two $3$-neighbors, Lemma \ref{lem:con-stressed} implies that $v$ has at least two $4^+$-neighbors, at least one of which,
    say $w$, 
    is joined to $v$ by a conductive edge. 
    If $w$ is insulated, then we are done; otherwise, $w$ is conductive, and $W$ 
    is nonempty.
    
    Note that for each $w \in W$, if $x \in N(w)$ is stressed, then Lemma \ref{lem:con-stressed} implies that $x = v$. 
    If $x \in N(w)$ is a $4^+$-vertex which is not stressed, then as $v$ is not conductively connected with an insulated vertex, $x$ is conductive.
    Therefore, if $w \in N(v)$, then $w$ has two conductive neighbors; otherwise, $w$ has three conductive neighbors.

    Consider a terminal block $B$ of $G[W]$. 
    As $G[W]$ has a minimum degree of at least $2$, $B$ is not isomorphic to $K_2$,
    so $B$ is $2$-connected.
    Thus, by Lemma \ref{lem:triangle-block},
    $B$ is a triangle.
    Furthermore, as $B$ is a terminal block in $G[W]$, two of the vertices $b_1,b_2 \in V(B)$
    satisfy $\deg_W(b_1) = \deg_W(b_2) = 2$, and hence $b_1, b_2 \in N(v)$.
    Thus, the stressed vertex $v$ has two neighbors in a conductive triangle, contradicting Lemma \ref{lem:stressed-triangle}.
\end{proof}

\begin{lemma}
\label{lem:at-most-one-stressed}
    If $x \in V(G)$ is an insulated $4$-vertex,
    then $x$ is conductively connected with at most one stressed vertex. 
\end{lemma}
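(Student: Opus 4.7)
The plan is to follow the template of Lemma \ref{lem:434-path} very closely, with the exceptional internal vertex on the path now being the insulated $4$-vertex $x$ in place of the distinguished $3$-vertex used there. Suppose for contradiction that the insulated $4$-vertex $x$ is conductively connected with two distinct stressed vertices $s$ and $t$ via conductive paths $P_s$ (from $s$ to $x$) and $P_t$ (from $x$ to $t$). First I would take a shortest $s$--$t$ path in $G[V(P_s) \cup V(P_t)]$; its internal vertices are all either $x$ or conductive, and if $x$ did not lie on it the path would contradict Lemma \ref{lem:con-stressed}. This lets me fix a shortest path $P$ in $G$ from $s$ to $t$ whose internal vertices are conductive $4$-vertices except for $x$. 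Lemma \ref{lem:close-stressed} then gives $|V(P)| \geq 9$, and the minimality of $P$ forces $A := G[V(P)]$ to be either $P$ itself or the cycle obtained from $P$ by adding the edge $st$.

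Next I would set up the reducibility data exactly as in Lemma \ref{lem:434-path}: let $P_1, P_2$ be the two components of $P - x$ (with $s \in V(P_1)$ and $t \in V(P_2)$), let $R$ be the set of $3$-neighbors of $V(P) \setminus \{x\}$, let $X := N_R(\{s,t\})$ (so $|X| \leq 4$ by Lemma \ref{lem:d-2}), let $H := G[V(A) \cup R]$, and define $\mathcal{B} := \{B_1, B_2, \{x\}\}$ with $B_i := H[N_R[P_i]] \setminus X$. The minimality of $P$, combined with Lemma \ref{lem:close-stressed} applied to a hypothetical shortcut through a $3$-vertex $r \in R \setminus X$ (which lies outside $N(s) \cup N(t)$), rules out any such $r$ having neighbors in both $P_1$ and $P_2$, so $\mathcal{B}$ is a genuine subgraph partition of $H \setminus X$.

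I would then invoke Lemma \ref{lem:crossing-over} with $q = 4$, $m = 3$, and $S := \spl(H, X, \mathcal{B})$. Conditions (\ref{item:X-leq-q-and-B-leq-m})--(\ref{item:r-neighbor}) are immediate, while (\ref{item:color-branch})--(\ref{item:r1r2}) follow from Lemma \ref{lem:ERT}, using that the $3$-vertices of $R$ always supply the strict slack $\ell > \deg$ in the relevant residual graphs. For condition (\ref{item:split-coloring}), I would partition $S$ into pieces of order at most $5$: since $|V(P)| \geq 9$, at least one $P$-neighbor $u$ of $x$ is an internal vertex of $P$, hence a conductive $4$-vertex with a unique $3$-neighbor $r_u$, and I designate the sole weak part to be $\{r_u, u, x\}$, which is $\ell_{\spl}$-weak by Lemma \ref{lem:weak-parts}(\ref{item:weak-petal}) via the petal graph with root $r_u$ and path $(u, x)$ missing the edge $r_u x$. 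All remaining parts are strong: interior petal clusters $S_r = S[N_{P'}[r]]$ are strong by Lemma \ref{lem:strong-parts}(\ref{item:internal-petal}); the end-pieces around $s$ and $t$ are strong by Lemma \ref{lem:strong-parts}(\ref{item:strong-end-piece}); and the isolated hanging copies (in particular every $r_{\{x\}}$) are strong by Lemma \ref{lem:hanging-red}. Lemma \ref{lem:orig_partition} with $b=5$, $d=4$ then gives that $S$ is $(\ell_{\spl}, 4, 20^{-4} 4^{-10})$-reductive, and Lemma \ref{lem:crossing-over} promotes this to $H$ being $(4, \alpha)$-reducible with $\alpha > \xi$, contradicting that $G$ has no induced $(4, \xi)$-reducible subgraph.

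The main obstacle I expect is handling $x$: because $x$ is an insulated $4$-vertex it contributes $\ell_H(x) = \deg_H(x) = 2$ with no slack, so it cannot be isolated in the partition $\mathcal{S}$. The key device is absorbing $x$ into the unique weak part together with a conductive $4$-neighbor $u$ and $u$'s $3$-companion $r_u$, where the slack at $r_u$ carries the petal-minus-edge structure of Lemma \ref{lem:weak-parts}(\ref{item:weak-petal}). A secondary care-point is the case where $|P_1| = 1$ or $|P_2| = 1$ (i.e.\ $x$ is adjacent to $s$ or $t$ on $P$); this is resolved by placing the weak part on the longer side, which exists because $|V(P)| \geq 9$ guarantees a conductive $P$-neighbor of $x$ on at least one side.
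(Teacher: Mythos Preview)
Your overall architecture matches the paper's proof almost exactly: set up $P$, $A$, $R$, $X$, $\mathcal B = \{B_1, B_2, \{x\}\}$, apply Lemma~\ref{lem:crossing-over}, and verify condition~(\ref{item:split-coloring}) by building a subgraph partition of the split graph with one weak petal-type piece absorbing $x$. There is, however, a genuine gap in one step.

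To show that no $r \in R \setminus X$ has neighbors in both $P_1$ and $P_2$, you invoke the minimality of $P$ together with Lemma~\ref{lem:close-stressed}. Neither tool does the job. Lemma~\ref{lem:close-stressed} only says that such a shortcut path $(P_{sp_1}, r, P_{p_2 t})$ would have at least nine vertices; it does not forbid the path. And your minimality is over $s$--$t$ paths whose single exceptional internal vertex is the fixed insulated $4$-vertex $x$, whereas the shortcut has a \emph{different} exceptional vertex (the $3$-vertex $r$), so it is not in the class you minimized over. The paper closes this gap by citing Lemma~\ref{lem:434-path}, which directly prohibits any path of conductive edges between two stressed vertices whose internal vertices are conductive except for one $3$-vertex outside their neighborhoods. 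With that substitution your argument goes through.

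A secondary point: your weak part $\{r_u, u, x\}$ may be too small as stated. If $r_u$ has further neighbors on $P$ (it is a $3$-vertex, so it can have up to three), those vertices must also be absorbed into the weak part, or else $\mathcal S$ fails to partition $V(S)$. The paper takes $S_w = \{x\} \cup N_A[r_w]$, which has up to five vertices and still fits Lemma~\ref{lem:weak-parts}(\ref{item:weak-petal}) with $t \leq 4$. You also need $r_u$ to avoid the end-pieces $S_s$ and $S_t$; the paper secures this via Lemma~\ref{lem:far_neighbors} by choosing the conductive $P$-neighbor of $x$ on the side where the stressed endpoint is at $P$-distance at least $4$.
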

\begin{proof}
    Suppose that $x$ is conductively connected with two stressed vertices $u$ and $v$. Let $P_1$ and $P_2$ be shortest conductive paths joining $x$ with $u$ and $v$, respectively.
    We write $P = P_1 \cup P_2$, and by Lemma \ref{lem:close-stressed}, $|V(P)| \geq 9$.
    Let $R$ be the set of $3$-neighbors of $P_1 \cup P_2$, and observe that $|R| \geq 4$.
    Let $H = G[V(P) \cup R]$. We aim to show that $H$ is $(4,\xi)$-reducible.

    We claim that $A:=H[V(P)]$ is either a path or a cycle. Indeed, suppose that some vertex $p_1 \in V(P)$ is adjacent to an internal vertex $p_2 \in V(P) \setminus N_P(p_1)$. As $P_1$ and $P_2$ are chosen to be shortest, it follows without loss of generality that 
    $p_1 \in V(P_1)$ and $p_2 \in V(P_2)$. As $p_2$ is an internal vertex of $P$ and hence is not stressed, the edge $p_1 p_2$ is conductive. Therefore, the endpoints of $P$ are conductively connected stressed vertices, contradicting Lemma \ref{lem:con-stressed}. Therefore, $A$ is either a path or cycle.

    We write $X = N_R(\{u,v\})$.
    We claim that no vertex $r \in R \setminus X$ has a neighbor in both $P_1$ and $P_2$.
     Indeed, suppose that $r \in R \setminus X$ has neighbors $p_1$ and $p_2$ respectively in $P_1$ and $P_2$.
    Let $P_{up_1}$ be the subpath of $P$ from $u$ to $p_1$, and let $P_{p_2 v}$ be the subpath of $P$ from $p_2$ to $v$. Then, the path $(P_{u p_1}, r, P_{p_2 v})$ is a path joining $u$ and $v$ whose internal neighbors are conductive, with the exception of a single $3$-vertex $r$ which is adjacent to neither $u$ nor $v$.
    This contradicts Lemma \ref{lem:434-path}.

     For each $r \in R \setminus X$, Lemma \ref{lem:far_neighbors} along with the claim above implies that $P[N_P(r)]$ is a connected subpath of $P$ which does not contain an endpoint of $P$. 
     Similarly, for each $r \in (R \cap N(u)) \setminus N(v)$, $P[N_{P_2}(r)]$ is a connected subpath of $P$ containing no endpoint of $P$. Additionally, for each $r \in (R \cap N(v)) \setminus N(u)$, 
      $P[N_{P_1}(r)]$ is a connected subpath of $P$ containing no endpoint of $P$. 
        Finally, we observe that for each $r \in R \cap N(u)$, $P[N_{P_1}(r)]$ is a subpath of $P$ containing $u$. 
        Similarly, 
        for each $r \in R \cap N(v)$, $P[N_{P_1}(v)]$ is a subpath of $P$ containing $v$.

    Now, for $i \in \{1,2\}$, we write $H_i$ for the subgraph of $H$ induced by $P_i -x$ and the $3$-neighbors of $P_i$, excluding $X$.
    That is, $H_i = H[N_{R\setminus X}[P_i -x]]$.
    We write $\mathcal B = \{H_1,H_2,x\}$, and we observe that $\mathcal B$ is a subgraph partition of $H \setminus X$. We write $S = \spl(H,X,\mathcal B)$.   
    
    We
    use Lemma \ref{lem:crossing-over} to show that $S$ is $(4, \xi)$-reducible.
    As $|X| \leq 4$ and $|\mathcal B| = 3$, (\ref{item:X-leq-q-and-B-leq-m})     holds with $q = 4$ and $m = 3$. By construction, each vertex $r \in R$ has a neighbor in $P =
    V(H) \setminus R$, so (\ref{item:r-neighbor}) holds.

   Next, we show that (\ref{item:color-branch}) holds.
    For each $B \in \mathcal B$ and $r \in X$, 
    write $H' = H \setminus (V(B) \cup \{r\})$.
    As each vertex in $H'$ is a $4^-$-vertex, each
    $v \in V(H')$ satisfies $\ell_{H'}(v) \geq \deg_{H'}(v)$.
    We also observe that each component of $H'$ contains an endpoint of $P$ with at least one $3$-neighbor $r' \in R - r$ satisfying $\ell_{H'}(r') > \deg_{H'}(r')$. Therefore,
    Lemma \ref{lem:ERT} implies that
    $H'$ is $\ell_{H'}$-choosable, and (\ref{item:color-branch}) holds.

     To show that (\ref{item:r1r2}) holds, consider a pair $r_1, r_2 \in X$, and define $H_{\ref{item:r1r2}} = H \setminus \{r_1, r_2\}$.
    We observe that $H_{\ref{item:r1r2}}$ is connected, as each pair of vertices in $H_{\ref{item:r1r2}}$ is joined by a path whose internal vertices belong to $P$.
    As before, $\ell_{H_{\ref{item:r1r2}}}(v) \geq \deg_{H_{\ref{item:r1r2}}}(v)$ for each $v \in V(H_{\ref{item:r1r2}})$.
    Furthermore, as $|R| \geq 4$, $R \setminus \{r_1,r_2\}$ is nonempty,
    so some vertex $r \in R \setminus \{r_1,r_2\}$
    satisfies $\ell_{H_{\ref{item:r1r2}}}(r) > \deg_{H_{\ref{item:r1r2}}}(r)$. Therefore,
    Lemma \ref{lem:ERT} implies that $H_{\ref{item:r1r2}}$ is $\ell_{H_{\ref{item:r1r2}}}$-choosable.

    Finally, we show that (\ref{item:split-coloring}) holds.
    We aim to partition $S$ into $\ell_{\spl}$-weak subgraphs such that all but one of these subgraphs is $\ell_{\spl}$-strong.
    We define the following induced subgraphs of $S$.
    \begin{itemize}
        \item We define  $S_x = x$.
        \item We define $R_u \subseteq N_S(u)$ to be the set of neighbors of $u$ in $S$ of the form $r_B$, where $r \in R$ and $B = H_1$. Then, we define $S_u = S[N_A[R_u]]$.
        \item We define $R_v \subseteq N_S(v)$ to be the set of neighbors of $v$ in $S$ of the form $r_B$, where $r \in R$ and $B = H_2$. Then, we define $S_v = S[N_A[R_v]]$.
        \item  If $r \in R \setminus X$, then
        we define $S_r = S[N_A[r]]$.
       \item If $r \in X$, 
       $B \in \mathcal B$, and $r$ has no neighbor in $B \cap \{u,v\}$,
           then we define $S_{r_B} = S[N_A[r_B]]$.

    \end{itemize}
    As $|V(A)| = |V(P)| \geq 9$,
    either $\dist_P(x,u) \geq 4$ or $\dist_P(x,v) \geq 4$.
    Therefore, Lemma \ref{lem:far_neighbors} implies that 
    $x$ has a conductive neighbor $w$ with a neighbor $r_w \notin N_S(u) \cup N_S(v)$ that corresponds with a $3$-neighbor of $w$ in $H$.
    We write $S_w = S_x \cup S_{r_w}$.  
  We write $R' =  (R \setminus X) \cup \{r_B \colon r \in X, B \in \mathcal{B}\}$, and we define \[\mathcal{S} = \{S_r \colon r \in R' \setminus (R_u \cup R_v \cup \{r_w\})\} \cup \{S_u, S_v, S_w\}.\]
    We observe that $\mathcal S$ is a subgraph partition of $S$.
    By our previous observations, each part of $\mathcal S$ intersects $A$ in a path of at most four vertices. Hence, for each part $S' \in \mathcal S$, each vertex pair in $S'$ has a distance in $A$ of at most $3$.

    For each $v \in V(S)$, $\ell_{\spl}(v) \geq \deg_S(v)$, and for each vertex $r' \in V(S)$ which is either a $3$-vertex in $G$ or is of the form $r_B$ for some $3$-vertex $r \in V(H)$ and $B \in \mathcal B$, $\ell_{\spl}(r') \geq \deg_S(r')  + 1$.
    Furthermore, the neighbors of $S_w$ in $A$ have distance at least $4$ in $A \setminus S_w$ and hence belong to distinct parts of $\mathcal S$.
    Therefore, by Lemma \ref{lem:weak-parts} (\ref{item:weak-petal}),
    $S_w$ is an $\ell_{\spl}$-weak part of $\mathcal S$.
    By Lemma \ref{lem:strong-parts} (\ref{item:strong-end-piece}), $S_u$ and $S_v$ are $\ell_{\spl}$-strong parts of $\mathcal S$.
    For each $r \in R' \setminus (R_u \cup R_v \cup \{r_w\})$ with a neighbor in $A$
    and part $S_r \in \mathcal S$, the neighbors of $S_r$ in $A$ have a distance of at least $5$ in $A \setminus S_r$ and hence belong to distinct parts of $\mathcal S$; therefore,
    by Lemma \ref{lem:strong-parts} (\ref{item:internal-petal}) 
    $S_r$ is an $\ell_{\spl}$-strong part of $\mathcal S$.
    Finally, for each  $r' \in R'$ for which $r'$ has no neighbor in $A$, $r'$ must be a vertex of the form $r_B$ for some $r \in X$ and $B \in \mathcal{B}$. 
    Therefore,
    $r$ has a neighbor in $A$ which is not a neighbor of $r'$,
    so
    Lemma \ref{lem:hanging-red} implies that $S_{r'} = r'$ is a strong part of $\mathcal S$.

    By applying Lemma \ref{lem:orig_partition}
    with $b=5$ and $d=4$, $S$ is $(\ell_{\spl},4,20^{-4}4^{-10})$-reductive. Hence, Lemma \ref{lem:crossing-over} implies that $H$ is $(4, 20^{-4}4^{-10}36^{-1})$-reducible,
    giving us a contradiction and completing the proof.
\end{proof}

\begin{lemma}
\label{lem:d-t}
    If $v \in V(G)$ is a $6^+$-vertex with exactly $t$ neighbors of degree $3$, then $v$ is conductively connected with at most $\deg(v) - t$ stressed vertices.
\end{lemma}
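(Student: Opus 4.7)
The plan is to observe that any conductive path from $v$ to a stressed vertex $u$ must begin at a $4^+$-neighbor of $v$, and to show that the map sending each such $u$ to this initial neighbor is injective. Since $v$ has exactly $\deg(v) - t$ neighbors of degree at least $4$, this injectivity yields the desired bound.

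For each stressed vertex $u$ conductively connected to $v$, I would fix a conductive path $P_u$ from $v$ to $u$. Since $v$ is a $6^+$-vertex and $u$ is a $4$-vertex, we have $v \neq u$, so $P_u$ has length at least $1$. Let $w(u)$ denote the vertex of $P_u$ adjacent to $v$. If $P_u$ has length $1$, then $w(u) = u$ is a stressed $4$-vertex; if $P_u$ has length at least $2$, then $w(u)$ is an internal vertex of the conductive path $P_u$ and hence is a conductive $4$-vertex. In either case, $w(u)$ is a $4^+$-neighbor of $v$, so the map $u \mapsto w(u)$ takes values in the set of $\deg(v) - t$ such neighbors.

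To prove injectivity, suppose for contradiction that distinct stressed vertices $u_1 \neq u_2$ satisfy $w(u_1) = w(u_2) = w$. If $w$ is stressed, then because a stressed vertex cannot serve as an internal vertex of a conductive path, each $P_{u_i}$ must have length $1$, forcing $u_i = w$ for both $i$ and contradicting $u_1 \neq u_2$. Otherwise $w$ is a conductive $4$-vertex, and I concatenate $P_{u_1}$ traversed in reverse from $u_1$ to $w$ with $P_{u_2}$ traversed from $w$ to $u_2$, obtaining a walk $W$ from $u_1$ to $u_2$ that bypasses $v$ entirely. Every internal vertex of $W$ is either $w$ or an internal vertex of $P_{u_1}$ or $P_{u_2}$, and is therefore conductive; similarly every edge of $W$ is an edge of $P_{u_1}$ or $P_{u_2}$, hence conductive. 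Extracting a simple path from $W$ by deleting cycles yields a conductive path between the distinct stressed vertices $u_1$ and $u_2$, contradicting Lemma \ref{lem:con-stressed}. This completes the proof; no significant obstacle arises, and unlike most of the preceding lemmas in this section, no reducible subgraph needs to be exhibited.
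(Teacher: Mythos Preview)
Your proof is correct and follows essentially the same approach as the paper: both arguments fix a conductive path from $v$ to each stressed vertex, observe that the first step of such a path lands on a $4$-neighbor of $v$, and then invoke Lemma~\ref{lem:con-stressed} to rule out two stressed vertices sharing that first neighbor. The paper phrases this as a pigeonhole contradiction on $d-t+1$ paths, while you phrase it as injectivity of the map $u\mapsto w(u)$; your walk-to-path extraction is a slightly more explicit version of the paper's one-line claim that $s_i$ and $s_j$ become conductively connected through the shared conductive vertex.
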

\begin{proof}
Write $d = \deg(v)$ and suppose that $v$ is conductively connected with $d-t+1$ stressed vertices. Call these stressed vertices $s_1, \dots, s_{d-t+1}$, and for each $i \in [d-t+1]$, let $P_i$ be a conductive path joining $v$ and $s_i$. Note that each path $P_i$ contains a $4$-neighbor $u_i \in N(v)$.

As $v$ has at most $d-t$ $4$-neighbors, the pigeonhole principle tells us that there exists a $4$-neighbor $u \in N(v)$ that belongs to two paths $P_i$ and $P_j$. If $u$ is stressed, then $u$ belongs to only one conductive path $P_i = (v,u)$; therefore, $u$ is conductive. Then, both $s_i$ and $s_j$ are conductively connected with the conductive vertex $u$, and hence $s_i$ and $s_j$ are conductively connected with each other, contradicting Lemma \ref{lem:con-stressed}. Therefore, $v$ is conductively connected with at most $d-t$ stressed vertices.
\end{proof}

\subsection{A global condition for $5$-vertices in a minimal counterexample}
This subsection is dedicated to proving the following lemma.
\begin{lemma}
    \label{lem:4-t} 
        If $x \in V(G)$ is a $5$-vertex with exactly $t$ neighbors of degree $3$, then $x$ is conductively connected with at most $4 - t$ stressed vertices.
    \end{lemma}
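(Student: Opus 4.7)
The plan is to extend the pigeonhole argument of Lemma \ref{lem:d-t} with a reducibility argument in the residual case. Suppose for contradiction that a $5$-vertex $x$ with $t$ $3$-neighbors is conductively connected to $5-t$ distinct stressed vertices $s_1, \dots, s_{5-t}$, and for each $i$ fix a conductive path $P_i$ from $x$ to $s_i$ with first vertex after $x$ equal to $u_i \in N(x)$. As in Lemma \ref{lem:d-t}, each $u_i$ is a $4$-vertex, and the $u_i$ are pairwise distinct: if $u_i = u_j$ for some $i \neq j$, then $u_i$ cannot be stressed (else $s_i = u_i = s_j$), so $u_i$ is conductive, and concatenating $P_i$ with the reverse of $P_j$ at $u_i$ yields a conductive path between $s_i$ and $s_j$, contradicting Lemma \ref{lem:con-stressed}. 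The same argument shows the $P_i$ are pairwise internally vertex-disjoint. If $x$ had any $5^+$-neighbor, then $x$ would have at most $4-t$ $4$-neighbors, an immediate contradiction; so we reduce to the case where $x$ has exactly $5-t$ $4$-neighbors $u_1, \dots, u_{5-t}$ and no neighbor of degree at least $5$.

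In this residual case we construct an induced $(4,\xi)$-reducible subgraph, contradicting the choice of $G$. Let $R_x$, $R_i$, and $R_i'$ denote the sets of $3$-neighbors of $x$, of $s_i$, and of the internal vertices of $P_i$, respectively, and let $H$ be the subgraph of $G$ induced by $\{x\} \cup R_x \cup \bigcup_i V(P_i) \cup \bigcup_i R_i \cup \bigcup_i R_i'$. Lemma \ref{lem:434-path} ensures that every vertex of $R_i' \setminus (R_x \cup \bigcup_j R_j)$ lies in at most one $R_j'$, so taking $X := R_x \cup \bigcup_i R_i$ (of size at most $t + 2(5-t) \leq 10$) and $\mathcal{B} := \{\{x\}\} \cup \{B_i\}_{i=1}^{5-t}$ with $B_i := (V(P_i) \setminus \{x\}) \cup (R_i' \setminus X)$ (so $|\mathcal{B}| = 6-t \leq 6$) yields valid data for Lemma \ref{lem:crossing-over}. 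The choosability conditions (\ref{item:color-branch}) and (\ref{item:r1r2}) of Lemma \ref{lem:crossing-over} follow from Lemma \ref{lem:ERT}, since each relevant residual subgraph is connected and contains a $3$-vertex whose $\ell$-value strictly exceeds its degree.

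The heart of the argument is condition (\ref{item:split-coloring}): that $S := \spl(H,X,\mathcal{B})$ is $(\ell_{\spl},4,\gamma)$-reductive. We verify this via Lemma \ref{lem:orig_partition} using the following subgraph partition of $S$. Take $S_0 := \{x\} \cup \{r_{\{x\}} : r \in R_x\}$ as the unique weak part: $S_0$ is a star $K_{1,t}$ with $\ell_{\spl}(x) = \deg_S(x) - 1 = 4$ and each leaf $r_{\{x\}}$ satisfies $\ell_{\spl}(r_{\{x\}}) \geq \deg_S(r_{\{x\}}) + 1$, and $x$ has a neighbor $u_i$ in each of the $5-t$ path parts, so Lemma \ref{lem:deg-5-weak-part} applies. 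The remaining strong parts are built along each $P_i$ exactly as in the proof of Lemma \ref{lem:con-stressed}: for each $r \in R_i' \setminus X$, the part $N_{P_i}[r]$ is strong by Lemma \ref{lem:strong-parts}(\ref{item:internal-petal}); the end-piece $\{s_i\} \cup \{r_{B_i} : r \in R_i\}$ is strong by Lemma \ref{lem:strong-parts}(\ref{item:strong-end-piece}); and each remaining split copy $r_B$ (which is either isolated or attached to a single path vertex) forms a singleton strong part via Lemma \ref{lem:hanging-red}. Every part has at most $4$ vertices and $\max_v \deg_S(v) = 5$, so Lemma \ref{lem:orig_partition} gives $\gamma = 20^{-4}\cdot 4^{-8}$; then Lemma \ref{lem:crossing-over} yields that $H$ is $(4,\alpha)$-reducible with $\alpha = \min\{\gamma/180,\, 1/1200\} > 2^{-48} = \xi$, the desired contradiction.

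The main obstacle will be the structural case analysis verifying the partition hypotheses in every configuration: in particular, handling $3$-vertices that simultaneously lie in several of the sets $R_x, R_1, \dots, R_{5-t}, R_1', \dots, R_{5-t}'$ and are thus split into multiple copies with differing neighborhoods in $S$, and accommodating paths $P_i$ of arbitrary length, including the degenerate length-$1$ case where $u_i = s_i$ and the start-piece and end-piece coincide (still handled by Lemma \ref{lem:strong-parts}(\ref{item:strong-end-piece})).
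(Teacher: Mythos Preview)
Your overall architecture matches the paper's: set $X = N_R(\{x, s_1, \dots, s_{5-t}\})$, take $\mathcal B$ to be $\{x\}$ together with the branches $B_i$, and apply Lemma~\ref{lem:crossing-over} after verifying that the split graph is reductive via Lemma~\ref{lem:orig_partition} with $S_x$ as the sole weak part furnished by Lemma~\ref{lem:deg-5-weak-part}. However, your verification of conditions~(\ref{item:color-branch}) and~(\ref{item:r1r2}) is where the argument breaks. You invoke Lemma~\ref{lem:ERT}, but that lemma requires $f(v)\ge\deg(v)$ at \emph{every} vertex. Since $x$ is a $5$-vertex, for any induced subgraph $H'$ containing $x$ we have $\ell_{H'}(x)=4-5+\deg_{H'}(x)=\deg_{H'}(x)-1$, so the hypothesis of Lemma~\ref{lem:ERT} fails at $x$ and the lemma simply does not apply. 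The paper circumvents this by ordering the greedy colouring so that $x$ is coloured at a moment when it still has an available colour. For~(\ref{item:color-branch}) this is easy (colour $x$ first, since removing one branch and one $r\in X$ leaves $\ell(x)\ge 2$), but for~(\ref{item:r1r2}) it is genuinely delicate: one must consider $\mathcal Q=\{B_i:N_R(s_i)=\{r_1,r_2\}\}$, colour those branches before $x$, and bound the number $\sigma$ of coloured neighbours of $x$ by $3$; the bound $\sigma\le 3$ in the case $|\mathcal Q|\ge 2$ requires Lemma~\ref{lem:stressed-special-K4}. None of this is captured by a bare citation of Lemma~\ref{lem:ERT}.

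There is a secondary gap in your partition of $S$. Your end-piece $\{s_i\}\cup\{r_{B_i}:r\in R_i\}$ need not mesh with the petal parts: if some $r\in R_i$ is also adjacent to a conductive vertex $v\in V(P_i)\setminus\{s_i\}$ (which Lemma~\ref{lem:far_neighbors} permits), then $r_{B_i}$ is adjacent to $v$ in $S$, yet $v$'s unique $3$-neighbour lies in $X$, so $v$ does not appear in any petal part $N_{P_i}[r']$ with $r'\in R_i'\setminus X$, and your partition leaves $v$ uncovered. The paper instead takes $S_{y_i}=S[N_T[R_{y_i}]]$, which absorbs such vertices and can have up to five vertices; this is why the paper uses $b=5$ (and $d=5$, giving $\gamma=25^{-4}4^{-10}$), not $b=4$.
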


    Suppose for contradiction that $x$ is conductively connected with $s = 5 - t$ stressed vertices. We name
    these stressed vertices $y_1, \ldots, y_s$. For $i \in \{1, \ldots, s\}$, let $P_i$ be the shortest conductive path connecting $x$ and $y_i$. Observe that for all distinct $i, j \in \{1, \ldots, s\}$, $V(P_i) \cap V(P_j) = \{x\}$; otherwise the stressed vertices $y_i$ and $y_j$ are conductively connected, which contradicts Lemma \ref{lem:con-stressed}. Let $T = \bigcup_{i = 1}^s P_i$, and let $R$ be the set of $3$-neighbors of $T$ in $G$. We will demonstrate that $H = G[V(T) \cup R]$ is $(4, \xi)$-reducible. 

    First, we define $X = N_R(\{x, y_1, \ldots, y_s\})$ and prove the following claim. 
    \begin{claim}
    \label{claim:common-path}
        For each $r \in R \setminus X$, there exists $i \in \{1, \ldots, s\}$ such that $N_T(r) \subseteq V(P_i) -x$. 
    \end{claim}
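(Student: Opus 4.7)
The plan is to argue by contradiction and invoke Lemma \ref{lem:434-path}. Suppose instead that some $r \in R \setminus X$ has two neighbors $p_i \in V(P_i) - x$ and $p_j \in V(P_j) - x$ lying on distinct paths, where $i \neq j$. My goal will be to build from $r, p_i, p_j$ a single path $Q$ from $y_i$ to $y_j$ that satisfies the hypotheses of Lemma \ref{lem:434-path}, contradicting its conclusion.

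Concretely, I would take $Q$ to be the concatenation of the subpath of $P_i$ from $y_i$ to $p_i$, the edges $p_i r$ and $r p_j$, and the subpath of $P_j$ from $p_j$ to $y_j$. Since it has already been observed that $V(P_i) \cap V(P_j) = \{x\}$ and since $p_i, p_j \neq x$, the two subpaths are internally disjoint. Moreover, $r \notin V(T)$: every vertex of $T$ lies on some $P_\ell$, and such a vertex is either an internal (hence conductive) vertex of $P_\ell$, or is the stressed $4$-vertex $y_\ell$, or is the $5$-vertex $x$, so every vertex of $T$ has degree at least $4$, while $r$ is a $3$-vertex. Therefore $Q$ is a simple path from $y_i$ to $y_j$.

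It then remains to verify that $Q$ is a path of conductive edges whose internal vertex set consists of conductive vertices together with a single $3$-vertex $r$ that is adjacent to neither $y_i$ nor $y_j$. Since $r \notin X$, the vertex $r$ is not adjacent to $x$ nor to any $y_\ell$; in particular $p_i \neq y_i$ and $p_j \neq y_j$, so $p_i$ and $p_j$ lie strictly inside $P_i$ and $P_j$ and are therefore conductive $4$-vertices. All of the remaining internal vertices of $Q$ lie strictly inside $P_i$ or $P_j$ and hence are conductive by the definition of a conductive path. Finally, every edge of $Q$ is conductive: edges inside $P_i$ or $P_j$ by construction, and the edges $p_i r$ and $r p_j$ because an insulated edge must join two $4$-vertices, whereas $r$ is a $3$-vertex. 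Thus $Q$ meets the hypotheses of Lemma \ref{lem:434-path}, yielding the desired contradiction. I do not expect any real obstacle here; the only point requiring care is checking that $p_i$ and $p_j$ are truly internal to their respective paths (so conductive), which is a direct consequence of $r \notin X$.
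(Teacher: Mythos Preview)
Your proposal is correct and follows essentially the same approach as the paper: both argue by contradiction, concatenate the subpaths $P_{y_ip_i}$ and $P_{y_jp_j}$ through $r$, and invoke Lemma~\ref{lem:434-path}. Your write-up is in fact slightly more careful, explicitly verifying that $r \notin V(T)$ and that the edges $p_ir$, $rp_j$ are conductive, points the paper leaves implicit.
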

    \begin{proof}
        Fix $r \in R \setminus X$. As $r \notin X$,
        $r \notin N_H(x)$ and $r \notin N_H(y_i)$ for all $i \in \{1, \ldots, s\}$. Thus, $N_T(r)$ contains conductive vertices only. Suppose for contradiction that for each $i \in \{1, \ldots, s\}$, $N_T(r) \not \subseteq V(P_i) -x$. As $N_T(r)$ is non-empty, $r$ is adjacent to a conductive vertex $p_i \in V(P_i) -x$ for some $i \in \{1, \ldots, s\}$, 
        and because $N_T(r) \not\subseteq V(P_i) -x$, $r$ is adjacent to another conductive vertex $p_j \in V(P_j) -x$ where $j \in \{1, \ldots, s\} \setminus \{i\}$. Let the conductive subpath from $y_i$ to $p_i$ be denoted $P_{y_i p_i}$, and let the conductive subpath from $y_j$ to $p_j$ be denoted $P_{y_j p_j}$. Then, $(P_{y_i p_i}, r, P_{y_j p_j})$ is a path of conductive edges joining $y_i$ and $y_j$ whose internal vertices consist of conductive vertices and one $3$-vertex $r \notin N_H(y_i) \cup N_H(y_j)$, contradicting Lemma \ref{lem:434-path}. Therefore, the claim holds.
    \end{proof}

    At this point, we aim to define a subgraph partition $\mathcal{B}$ of $H \setminus X$ in order to construct a split graph and apply Lemma \ref{lem:crossing-over}. Towards this end, for each $i \in \{1, \ldots, s\}$, define the vertex set
    \[
        A_i = \left(V(P_i) -x \right) \cup \left \{ r \in R \setminus X : N_T(r) \subseteq V(P_i) -x \right \} .
    \]
    In other words, $A_i$ is the set of vertices $r \in R$ whose neighborhoods in $T$ are contained in $V(P_i) - x$, along with $V(P_i) - x$. Observe that $\{x\} \cup \bigcup_{i = 1}^s A_i$ partitions $V(T) \cup (R \setminus X) = V(H) \setminus X$. Let $B_x = x$ and $B_i = H[A_i]$ for each $i \in \{1, \ldots, s\}$. Then, 
    $\mathcal{B} = \{B_x, B_1, \ldots, B_s\}$ is the desired subgraph partition of $H \setminus X$. Additionally, observe that for each $B \in \{B_1, \ldots, B_s\}$, $x$ has exactly one $4$-neighbor in $B$ and no $3$-neighbor in $B$.

    Now, we check the conditions for Lemma \ref{lem:crossing-over}. Observe that $s \leq 5$, $|N_R(y_i)| = 2$ for each $i \in \{1, \ldots, s\}$, and $|N_R(x)| \leq 3$ by Lemma \ref{lem:d-2}. Hence, $|X| = |N_R(\{x, y_1, \ldots, y_s\})| \leq |N_R(x)| + \sum_{i = 1}^s |N_R(y_i)| \leq 13$ and $|\mathcal{B}| = s + 1 \leq 6$, so condition (\ref{item:X-leq-q-and-B-leq-m}) is satisfied with $q = 13$ and $m = 6$. By the definition of $R$, for each $r \in R$, $r$ has a neighbor in $V(H) \setminus R = V(T)$, so condition (\ref{item:r-neighbor}) is satisfied.

    Next, we verify that condition (\ref{item:color-branch}) holds. For each $i \in \{1, \ldots, s\}$, let $H_i = H[V(B_i) \cup X]$. Fix $r \in X$ and $B \in \mathcal{B}$. We aim to show that $H \setminus (V(B) \cup \{r\})$ is $\ell_{H \setminus (V(B) \cup \{r\})}$-choosable. For notational simplicity, let $D = V(B) \cup \{r\}$. First, we fix $i \in \{1, \ldots, s\}$ and prove that if $B_i \neq B$, $B_i$ is $\ell_{H_i \setminus D}$-choosable. As $y_i$ is adjacent to distinct $r_1, r_2 \in X$, without loss of generality, we let $r_1 \in N_X(y_i)$ be a vertex distinct from $r$. 
    Since $r_1 \in X$ and $V(B) \subseteq V(H) \setminus X$, $r_1 \notin V(B)$. Hence, as $r_1 \notin V(B) \cup \{r\} = V(D)$, $r_1$ and $y_i$ are neighbors in $H_i \setminus D$. Because $B_i$ consists of $4^-$-vertices, for each $v \in V(B_i)$, $\ell_{H_i \setminus D}(v) \geq \deg_{H_i \setminus D}(v) \geq \deg_{B_i \setminus D}(v) = \deg_{B_i}(v)$, where the last equality comes from the fact that $V(B_i) \cap V(D) = \emptyset$ so $B_i \setminus D = B_i$. Additionally, we observe that $\ell_{H_i \setminus D}(y_i) \geq \deg_{H_i \setminus D}(y_i) \geq \deg_{B_i \setminus D}(y_i) + 1 = \deg_{B_i}(y_i) + 1$, where the inequality $\deg_{H_i \setminus D}(y_i) \geq \deg_{B_i \setminus D}(y_i) + 1$ is implied by the fact that $r_1$ is adjacent to $y_i$ in $H_i \setminus D$. Therefore, by Lemma \ref{lem:ERT}, $B_i$ is $\ell_{H_i \setminus D}$-choosable. 
    
    Next, we prove that if $B_x \neq B$, then $B_x$ is $\ell_{H \setminus D}$-choosable. Since $V(B_x) = \{x\}$, it suffices to show that $\ell_{H \setminus D}(x) \geq 1$. Observe that $\ell_{H \setminus D}(x) = 4 - |N_H(x) \cap D| = 4 - |N_H(x) \cap V(B)| - |N_H(x) \cap \{r\}|$. Because $x$ has exactly one neighbor in $V(B)$ and at most one neighbor in $\{r\}$, $\ell_{H \setminus D}(x) \geq 2$. Thus, $B_x$ is $\ell_{H \setminus D}$-choosable. Now, fixing $r \in X$, $B \in \mathcal{B}$, and an $\ell_{H \setminus D}$-assignment $L$, we may produce an $L$-coloring of $H \setminus D$ by first coloring $B_x$ if $B_x \neq B$, then coloring $B_i$ for each $i \in \{1, \ldots, s\}$ where $B_i \neq B$, and finally coloring $X \setminus \{r\}$, using the fact that $\ell_{H[X]}(r') \geq \deg_{H[X]}(r') + 1$ for each $r' \in X$ since $X$ contains only $3$-vertices. This verifies that condition (\ref{item:color-branch}) holds.

    Now, we show that condition (\ref{item:r1r2}) is satisfied. Fix $r_1, r_2 \in X$ and an $\ell_{H \setminus \{r_1, r_2\}}$ assignment $L$ on $H \setminus \{r_1, r_2\}$. We aim to show that $H \setminus \{r_1,r_2\}$ is $L$-colorable. Recall that $\ell_{H \setminus \{r_1, r_2\}}(v) = \deg_{H \setminus \{r_1, r_2\}}(v)$ for each $4$-vertex in $v \in V(H)
    \setminus \{r_1, r_2\}$, and $\ell_{H \setminus \{r_1, r_2\}}(v) = \deg_{H \setminus \{r_1, r_2\}}(v) + 1$ for each $3$-vertex in $v \in V(H) \setminus \{r_1, r_2\}$. 
    Let 
    \[
        \mathcal{Q} = \{B_i: B_i \in \{B_1, \ldots, B_s\} \textrm{ and } N_R(y_i) = \{r_1, r_2\} \}.
    \]
    To obtain an $L$-coloring of $H \setminus \{r_1, r_2\}$, we will first color each $B_i \in \mathcal{Q}$, then color $x$, then color each $B_i \notin \mathcal{Q}$, and finally color the remaining uncolored vertices in $X$. 
    Towards this end, we observe that for each $i \in \{1, \ldots, s\}$, if $B_i \notin \mathcal{Q}$, then $N_R(y_i) \neq \{r_1, r_2\}$, which means that $y_i$ is adjacent to a $3$-vertex $r \in X \setminus \{r_1, r_2\}$. 
    Additionally, we consider the quantity 
    \[
        \sigma = |N_H(x) \cap \{r_1, r_2\}| + \left\vert N_H(x) \cap \left(\bigcup_{B_i \in \mathcal{Q}} V(B_i)\right) \right\vert,
    \]
    which we use to lower bound the number of colors available in $L(x)$ after all $B_i \in \mathcal{Q}$ have been colored. If $\sigma \leq 3$, then we may obtain an $L$-coloring of $H \setminus \{r_1, r_2\}$ as follows. First, color $V(B_i)$ for each $B_i \in \mathcal{Q}$. Because each $B_i \in \mathcal{Q}$ contains only $4^-$-vertices and is adjacent to $x$, which is uncolored, coloring $\bigcup_{B_i \in \mathcal{Q}} V(B_i)$ can be accomplished using Lemma \ref{lem:ERT}. Second, color $x$. This is possible because at the time $x$ is colored, the number of available colors at $x$ is at least $4 - |N_H(x) \cap \{r_1, r_2\}| - |N_H(x) \cap \bigcup_{B_i \in \mathcal{Q}} V(B_i)| = 4 - \sigma \geq 1$. Third, color $V(B_i)$ for all $B_i \notin \mathcal{Q}$. Applying Lemma \ref{lem:ERT}, this is possible because each $B_i \notin \mathcal{Q}$ contains only $4^-$-vertices and is adjacent to an uncolored $r \in X \setminus \{r_1, r_2\}$. Finally, color all $r \in X \setminus \{r_1, r_2\}$.

    To prove finish proving (\ref{item:r1r2}), we show that $\sigma \leq 3$. First, consider the case where $|\mathcal{Q}| \geq 2$. Then there exist distinct $B_i, B_j \in \mathcal{Q}$. Hence, $N_R(y_i) = N_R(y_j) = \{r_1, r_2\}$. Lemma \ref{lem:stressed-special-K4} implies that $H[\{y_i, y_j, r_1, r_2\}]$ is a special $K_4$. Hence, $r_1$ and $r_2$ are adjacent, which means that $ N_H(x) \cap \{r_1, r_2\}  = \emptyset$ and $|\mathcal{Q}| = 2$. Then $\sigma = |N_H(x) \cap \{r_1, r_2\}| + |N_H(x) \cap (\bigcup_{B_i \in \mathcal{Q}} V(B_i))| = |N_H(x) \cap (\bigcup_{B_i \in \mathcal{Q}} V(B_i))| = 2$, using the fact that $|N_H(x) \cap V(B_i)| = 1$ for all $i \in \{1, \ldots, s\}$. On the other hand, if $|\mathcal{Q}| \leq 1$, then $\sigma = |N_H(x) \cap \{r_1, r_2\}| + |N_H(x) \cap (\bigcup_{B_i \in \mathcal{Q}} V(B_i))| \leq 2 + 1 = 3$. In both cases, $\sigma \leq 3$, demonstrating that condition (\ref{item:r1r2}) is satisfied.

    Finally, we verify condition (\ref{item:split-coloring}). Define $S = \spl(H, X, \mathcal{B})$. For the remainder of this proof, for each vertex $v \in V(H) \setminus X$, we identify the copy of $v$ in $S$ with $v$. Similarly, we identify the copy of $T$ in $S$ with $T$. Additionally, we let $R_S = (R \setminus X) \cup \left(\{r_B \colon r \in X, B \in \mathcal{B} \}\right)$. 

    \begin{lemma}
        $S$ is $(\ell_{\spl}, 4, \gamma)$-reductive for $\gamma = 25^{-4} 4^{-10}$. 
    \end{lemma}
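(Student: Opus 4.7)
The plan is to construct a subgraph partition $\mathcal{S}$ of $S$ in the style of the partitions used in the proofs of Lemmas \ref{lem:con-stressed} and \ref{lem:at-most-one-stressed}, and then apply Lemma \ref{lem:orig_partition} with $b = 5$ and $d = 5$ (the maximum degree of $S$, achieved only at $x$) to obtain the reductivity constant $(bd)^{-4} 4^{-2b} = 25^{-4} 4^{-10} = \gamma$. The key structural difference from Lemma \ref{lem:at-most-one-stressed} is that our central vertex $x$ has degree $5$ rather than $4$, so the part containing $x$ must be treated as a weak part via Lemma \ref{lem:deg-5-weak-part} (instead of via Lemma \ref{lem:weak-parts}).

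For the partition, let $R_x = N(x) \cap R$ and $R_{y_i} = N(y_i) \cap R$ for each $i \in \{1,\ldots,s\}$, noting that $R_x, R_{y_i} \subseteq X$ with $|R_x| = t$ and $|R_{y_i}| = 2$. Define the central piece $S_x = S[\{x\} \cup \{r_{B_x} : r \in R_x\}]$, which is a star-like graph with center $x$ and $t$ leaves. For each $i$, define the end-piece $S_{y_i} = S[N_{P_i}[\{r_{B_i} : r \in R_{y_i}\}]]$, containing $y_i$, the split 3-neighbor copies for branch $B_i$, and any adjacent vertices these copies share on $P_i$. For each $r \in R \setminus X$, define the internal petal $S_r = S[N_T[r]]$, which by Claim \ref{claim:common-path} is confined to a single $P_i$. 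Finally, for each remaining split copy $r_B$ with $r \in X$ and $B \in \mathcal{B}$ not yet assigned, if $r_B$ has conductive neighbors on some $P_i$ then form the petal $S_{r_B} = S[N_T[r_B]]$; otherwise let $S_{r_B} = \{r_B\}$ be a singleton.

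To verify the hypotheses of Lemma \ref{lem:orig_partition}, first note that each part has at most $5$ vertices by the same argument as in Lemma \ref{lem:con-stressed} (using Lemma \ref{lem:far_neighbors} together with the fact that each 3-vertex has at most $3$ consecutive neighbors on a path). The central part $S_x$ is an $\ell_{\spl}$-weak part by Lemma \ref{lem:deg-5-weak-part}: $\ell_{\spl}(x) = 4 = \deg_S(x) - 1$, each $r_{B_x}$ satisfies $\ell_{\spl}(r_{B_x}) \geq \deg_S(r_{B_x}) + 1$, and $x$'s $s = 5 - t$ non-$3$-neighbors (the second vertex of each $P_i$) lie in $s$ distinct parts of $\mathcal{S} - S_x$ since parts other than $S_x$ are confined to a single $P_i$ and the $P_i$'s meet only at $x$. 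Each $S_{y_i}$ is an $\ell_{\spl}$-strong end-piece by Lemma \ref{lem:strong-parts}(\ref{item:strong-end-piece}); each internal $S_r$ (including non-singleton $S_{r_B}$'s) is an $\ell_{\spl}$-strong internal petal by Lemma \ref{lem:strong-parts}(\ref{item:internal-petal}); and each singleton $\{r_B\}$ is $\ell_{\spl}$-strong by Lemma \ref{lem:hanging-red} (applied with the omitted neighbor being any neighbor of $r$ in $H$ outside $B$). The main obstacle will be verifying the adjacency conditions for the petal and end-piece applications, namely that the two ``external'' neighbors of each petal part lie in two distinct other parts; this follows from the fact that each part intersects each $P_i$ in a connected subpath of length at most $2$, so two neighbors of a given part along $P_i$ cannot both fit inside any other single part.
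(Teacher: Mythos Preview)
Your proposal is correct and follows essentially the same approach as the paper: the partition into a weak central piece $S_x$ (handled via Lemma~\ref{lem:deg-5-weak-part}), strong end-pieces $S_{y_i}$ at each stressed endpoint (via Lemma~\ref{lem:strong-parts}(\ref{item:strong-end-piece})), strong internal petals $S_r$ along the conductive paths (via Lemma~\ref{lem:strong-parts}(\ref{item:internal-petal})), and strong singleton parts for dangling split copies (via Lemma~\ref{lem:hanging-red}) is exactly what the paper does, and the application of Lemma~\ref{lem:orig_partition} with $b=d=5$ yields the same constant. The paper spends more space carefully verifying that $\mathcal S$ is indeed a partition and that the two external path-neighbors of each internal petal lie in distinct parts (using auxiliary Claims~\ref{claim:form_path} and~\ref{claim:contain_H_T_intersection}), but your sketched argument via ``each part meets each $P_i$ in a connected subpath of at most three vertices'' captures the same content.
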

    \begin{proof}
        To prove this claim, we partition $S$ into $\ell_{\spl}$-strong and $\ell_{\spl}$-weak parts. Let $R_0$ be the set of vertices $r \in R_S$ with no neighbor in $T$. Let $R_x$ be the set of vertices $r \in R_S$ for which $N_T(r) = \{x\}$. Finally, let $R_b = R_S \setminus (R_0 \cup R_x)$. Observe that $R_0, R_x$, and $R_b$ partition $R_S$. We give a more convenient characterization of $R_b$. By the construction of $S$, for each $r \in \{r_B \colon r \in X, B \in \mathcal{B}\}$, all neighbors of $r$ belong to the same part of $ \mathcal{B}$. Additionally, Claim \ref{claim:common-path} implies that for each $r \in R \setminus X$, $N_T(r) \subseteq V(P_i) -x \subseteq V(B_i)$ for some $i \in \{1, \ldots, s\}$. Hence, for all $r \in R_S$, all neighbors of $r$ in $T$ belong to the same part of $\mathcal{B}$. We conclude that for all $r \in R_S$, the neighborhood of $r$ in $T$ is one of the following: the empty set, $\{x\}$, or a non-empty subset of $V(P_i) -x$ for some $i \in \{1, \ldots, s\}$. We observe that $R_b = \{r \in R_S \colon \exists i \in \{1, \ldots, s\}, \emptyset \neq N_T(r) \subseteq V(P_i) -x\}$. In other words, $R_b$ is the set of vertices $r \in R_S$ such that the neighborhood of $r$ in $T$ is a nonempty subset of $V(P_i) - x$ for some $i \in \{1, \ldots, s\}$.

        Now, we define a subgraph partition $\mathcal{S}$ of $S$. First, define $\mathcal{S}_0 = \{r \colon r \in R_0\}$ (that is, $\mathcal S_0$ is a family of $K_1$ subgraphs of $S$). Next, for each $i \in \{1, \ldots, s\}$, let $R_{y_i} = N_{R_S}(y_i)$. Observe $R_{y_i} \subseteq R_b$. Let $S_{y_i} = S[N_T[R_{y_i}]]$. Then, for each $r \in R_b \setminus \bigcup_{i = 1}^s R_{y_i}$, let $S_r = S[N_T[r]]$. We observe that for $r \in  R_b \setminus \bigcup_{i = 1}^s R_{y_i}$, $N_T(r)$ consists of conductive vertices. Next, define $\mathcal{S}_b = \{S_{y_i} \colon i \in \{1, \ldots, s\} \} \cup \{S_r \colon r \in  R_b \setminus \bigcup_{i = 1}^s R_{y_i}\}$. Finally, let $S_x = S[N_{R_x}[x]]$, let $\mathcal{S}_x = \{S_x\}$, and let $\mathcal{S} = \mathcal{S}_0 \cup \mathcal{S}_b \cup \mathcal{S}_x$. We make the observation that for each $i \in \{1, \ldots, s\}$, the only part to which $y_i$ belongs is $S_{y_i}$. 

        We show $\mathcal{S}$ forms a subgraph partition of $S$. Observe that for $\lambda \in \{0, x, b\}$ and $r \in R_{\lambda}$, $r$ appears in exactly one part in $\mathcal{S}_{\lambda}$, and $r$ does not appear in any part in $\mathcal{S} \setminus \{\mathcal{S}_{\lambda}\}$. Hence, for all $r \in R_S$, $r$ appears in exactly one part of $\mathcal{S}$. Now, we show that for each $v \in V(T)$, $v$ appears in exactly one part of $\mathcal{S}$. 
        If $v = x$, then $v$ belongs to exactly one part in $\mathcal{S}$, namely $S_x$. Otherwise, if $v \in V(T) -  x$, then $v$ has a neighbor $r \in R_b$. Because $r \in V(S_u)$ for some $S_u \in \mathcal{S}_b$, we see that $v \in V(S_u)$, using the fact that $S_u$ is defined to contain $N_T(r)$. Hence, $v$ belongs to at least one part of $\mathcal{S}$. Now, suppose for contradiction that $v$ belongs to two parts
        of $\mathcal{S}$. Since $v$ belongs to parts in only $\mathcal{S}_b$, there exist distinct $S_{u_1}, S_{u_2} \in \mathcal{S}_b$ such that $v \in V(S_{u_1})$ and $v \in V(S_{u_2})$. Then,
        there exist $r_1 \in R_S \cap V(S_{u_1})$ and $r_2 \in R_S \cap V(S_{u_2})$ such that $v$ is adjacent to both $r_1$ and $r_2$. Since each $r \in R_S$ appears in exactly one part in $\mathcal{S}$, $r_1 \neq r_2$. Hence, $v$ is stressed, and $v = y_i$ for some $i \in \{1, \ldots, s\}$. We conclude that $y_i$ belongs to two distinct parts $S_{u_1}$ and $S_{u_2}$, which contradicts the above observation. Hence we conclude that $v$ belongs to exactly one part in $\mathcal{S}$. Since all vertices in $V(S)$ belong to exactly one part in $\mathcal{S}$, $\mathcal{S}$ partitions $V(S)$, so $\mathcal{S}$ forms a subgraph partition of $S$.

        Using Lemma \ref{lem:hanging-red}, we see that for each $r \in \mathcal{S}_0$, $r$ is an $\ell_{\spl}$-strong part. 

        Next, we make the following observation and claims about $\mathcal S_b$.
        \begin{observation}
        \label{obs:list_size_in_S}
            For each $v \in V(H) \setminus X$, $\ell_{\spl}(v) = \ell_H(v)$. Additionally, for each $r_B \in \{r_B \colon r \in X, B \in \mathcal{B}\}$, $\ell_{\spl}(r_B) = \ell_H(r) = \deg_H(r) + 1 \geq \deg_S(r_B) + 1$. 
        \end{observation}

        \begin{claim}
            \label{claim:form_path}
            For each $r \in R_b$, $T[N_T(r)]$ forms a subpath of $P_i -x$ for some $i \in \{1, \ldots, s\}$.
        \end{claim}
        \begin{proof}
            Fix $r \in R_b$. Since $r \in R_b$, $N_T(r) \neq \emptyset$ and $N_T(r) \subseteq V(P_i) -x$ for some $i \in \{1, \ldots, s\}$. The claim is clearly true when $|N_T(r)| = 1$, so suppose that $|N_T(r)| \geq 2$. Let $u, v \in V(P_i) -x$ be the neighbors of $r$ that are farthest apart in $P_i -x$. Observe that $u$ and $v$ are joined by a conductive induced subpath of $P_i -x$, which we denote $P_{uv}$. By Lemma \ref{lem:far_neighbors}, every vertex $p \in V(P_{uv})$ is adjacent to $r$. Hence, $T[N_T(r)]$ forms a subpath of $P_i -x$. 
        \end{proof}

        \begin{claim}
            \label{claim:contain_H_T_intersection}
            For each $S' \in \mathcal{S}_b$, there exists $r' \in R_b$ such that $V(S') \cap V(T) = N_T(r')$.
        \end{claim}
        \begin{proof}
            If $S' = S_{r'}$ for some $r' \in R_b \setminus \bigcup_{i = 1}^s R_{y_i}$, then the claim is true by the definition of $S_{r'}$. So, suppose that $S' = S_{y_i}$ for $i \in \{1, \ldots, s\}$. Let $R_{y_i} = \{r_1, r_2\}$. Since $r_1, r_2 \in R_b$, we have $N_T(r_1), N_T(r_2) \subseteq V(P_i) -x$. 
            Then, choose $p \in N_T(R_{y_i}) = N_T(r_1) \cup N_T(r_2) \subseteq V(P_i) -x$ such that the distance from $p$ to $y_i$ in $P_i -x$ is maximized. Let the subpath of $P_i -x$ be called $P_{y_i p}$. Without loss of generality, assume that $p \in N_T(r_1)$. Then, by Lemma \ref{lem:far_neighbors}, all vertices in $P_{y_i p}$ are neighbors of $r_1$. Hence, $V(S') \cap V(T) = N_T(R_{y_i}) \subseteq N_T(r_1)$. Because $N_T(r_1) \subseteq N_T(R_{y_i})$, we have $V(S') \cap V(T) = N_T(r_1)$, proving the claim.
        \end{proof}

        Now, we show that for all parts $S' \in \mathcal{S}_b$, $S'$ is an $\ell_{\spl}$-strong part. First, we demonstrate that for each $i \in \{1, \ldots, s\}$, $S_{y_i}$ is a strong part. Observe that $R_{y_i} \subseteq R_b$ and that $|R_{y_i}| = 2$. Let $R_{y_i} = \{r_1, r_2\}$. Applying Claim \ref{claim:contain_H_T_intersection}, we may assume without loss of generality that 
        $N_T(R_{y_i}) = N_T(r_1)$. By using Claim \ref{claim:form_path}, we see that $T[N_T(R_{y_i})] = T[N_T(r_1)]$ is a subpath of $P_i -x$. We call this subpath $P_{r_1} = (v_1, \ldots, v_t)$ where $v_1 = y_i$ and $1 \leq t \leq 3$. Now, we demonstrate that $N_T(r_2) = \{y_i\}$. Recall from the definition of $R_{y_i}$ that $\{y_i\} \subseteq N_T(r_2)$. Next, if $v \in N_T(r_2)$, then $v \in N_T(R_{y_i}) = N_T(r_1)$. Hence, $v$ is stressed. As $r_2 \in R_b$, $N_T(r_2) \subseteq V(P_i) -x$. Thus, because $y_i$ is the only stressed vertex in $V(P_i) -x$, we conclude that $v = y_i$, proving that $N_T(r_2) = \{y_i\}$. Hence, $S_{y_i}$ is formed from a petal graph with root $r_1$ and path $P_{r_1}$ by adding $r_2 \in N_S(y_i)$ and possibly the edge $r_1 r_2$; no other edges with endpoint $r_2$ are added to the petal graph. Next, by applying Observation \ref{obs:list_size_in_S}, we see that $\ell_{\spl}(r_1) \geq \deg_S(r_1) + 1$, $\ell_{\spl}(r_2) \geq \deg_S(r_2) + 1$, and for each $v \in P_{r_1}$, $\ell_{\spl}(v) = \ell_H(v) = \deg_H(v) = \deg_S(v)$. Finally, we observe that $v_t \in P_{r_1}$ has a neighbor in $P_i$ from a part other than $S_{y_i}$, completing the proof that $S_{y_i}$ is an $\ell_{\spl}$-strong part.

        Next, we argue that $S_r$ is an $\ell_{\spl}$-strong part for each $r \in R_b \setminus \bigcup_{i = 1}^s R_{y_i}$. By Claim \ref{claim:form_path}, there exists $i \in \{1, \ldots, s\}$ such that $T[N_T(r)]$ is a subpath of $P_i -x$, which we call $P_r = (v_1, \ldots, v_t)$.
        We note that $t \leq \deg_S(r) \leq 3$.
        Therefore, $S_r$ is a petal graph with a root $r$ and path $P_r$ on at most three vertices. For each $v \in N_T(r)$, we apply Observation \ref{obs:list_size_in_S} to obtain $\ell_{\spl}(v) = \ell_H(v) = \deg_H(v) = \deg_S(v)$. Now, if $r \in R \setminus X$, Observation \ref{obs:list_size_in_S} gives $\ell_{\spl}(r) = \ell_H(r) = \deg_H(r) + 1 \geq \deg_S(r) + 1$. On the other hand, if $r \in \{r_B \colon r \in X, B \in \mathcal{B}\}$, then $\ell_{\spl}(r) \geq \deg_S(r) + 1$ by Observation \ref{obs:list_size_in_S}. Let $w$ be the neighbor of $v_1$ in $T$ and $z$ be the neighbor of $v_t$ in $T$. Observe that $w$ and $z$ are distinct. We demonstrate that $w$ and $z$ belong to distinct parts in $\mathcal{S}$. Suppose for contradiction that for some $S' \in \mathcal{S}$, $w, z \in V(S')$. If $x \in \{w,z\}$, then $S' = S_x$. Because $V(S_x) \cap V(T) = \{x\}$, $w = z = x$, which contradicts the fact that $w$ and $z$ are distinct. If $x \not \in \{w,z\}$, then $S' \in \mathcal{S}_b$. By Claim \ref{claim:contain_H_T_intersection}, $w, z \in N_T(r')$ for some $r' \in R_b$. But then $N_T(r')$ is not connected in $T -x$, which contradicts Claim \ref{claim:form_path}. Hence, the vertices $w, z \in V(T)$ that are adjacent to $S_r$ belong to different parts in $\mathcal{S}$. We conclude that $S_r$ is an $\ell_{\spl}$-strong part.

        Finally, we demonstrate that $S_x \in \mathcal{S}$ is an $\ell_{\spl}$-weak part. We observe that $S_x$ can be obtained from a star $K_{1, t}$ with center $x$ and leaves $R_x$ by adding edges between distinct vertices in $R_x$. Additionally, by Lemma \ref{lem:d-2}, $t = |R_x| \leq 3$. From Observation \ref{obs:list_size_in_S}, we see that $\ell_{\spl}(x) = \ell_H(x) = 4 = \deg_H(x) - 1 = \deg_S(x) - 1$ and that for each $r \in R_x$, $\ell_{\spl}(r) \geq \deg_S(r) + 1$. Now, we show that given distinct $u, v \in N_T(x)$, $u$ and $v$ belong to distinct parts in $\mathcal{S}$. 
        Suppose for contradiction that $u, v \in S'$ for some $S' \in \mathcal{S}$. Observe that $S' \in \mathcal{S}_b$. Then, by Claim \ref{claim:contain_H_T_intersection}, there exists $r \in R_b$ such that $u, v \in N_T(r) = V(S') \cap V(T)$. As $\{u, v\} \not \subseteq V(P_i) -x$ for each $i \in \{1, \ldots, s\}$, we conclude that $N_T(r) \not \subseteq V(P_i) -x$ for each $i \in \{1, \ldots, s\}$, which contradicts our characterization of $R_b$. As each $u \in N_T(x)$ belongs to a part in $\mathcal{S} - S_x$, we conclude that $x$ has a neighbor in at least $|N_T(x)|$ parts in $\mathcal{S} - S_x$. As $x$ is conductively connected to $s = 5 - t$ stressed vertices through conductive paths whose pairwise intersections contain only $x$, $|N_T(x)| = 5 - t$, proving that $x$ has a neighbor in at least $5 - t$ parts in $\mathcal{S} - S_x$. Hence, by Lemma \ref{lem:deg-5-weak-part}, $S_x$ is an $\ell_{\spl}$-weak part.

        Using $b = 5$ and $d = 5$, we apply Lemma \ref{lem:orig_partition} to prove that $S$ is $(\ell_{\spl}, 4, 25^{-4} 4^{-10})$-reductive. Therefore, condition (\ref{item:split-coloring}) of Lemma \ref{lem:crossing-over} is satisfied.
    \end{proof}
    
    Because conditions (\ref{item:X-leq-q-and-B-leq-m})-(\ref{item:split-coloring}) are satisfied, we conclude from Lemma \ref{lem:crossing-over} that $H$ is $(4, \alpha)$-reducible for the constant $\alpha = \min\{\frac{25^{-4}4^{-10}}{3 \cdot 13 \cdot 6}, \frac{1}{12 \cdot 13^2}\} = \frac{25^{-4}4^{-10}}{234}$.
    As $\xi < \alpha$, 
    we reach a contradiction, and the proof of Lemma \ref{lem:4-t} is complete.

\section{Discharging}
\label{sec:discharging}
In this section, we finish the proof of Theorem \ref{thm:mad113} by showing
that our minimal counterexample $G$ has maximum average degree at least $\frac{11}{3}$, a contradiction.
We apply the following discharging rules on $G$:
\begin{enumerate}
    \item[(R1)] Each vertex $v$ receives a charge equal to $\deg(v) - \frac{11}{3}$.
    \item[(R2)] Each vertex of degree $3$ takes a charge of $\frac{1}{3}$ from each $4^+$-neighbor.
    \item[(R3)] Each stressed vertex $v$ takes a charge of $\frac{1}{3}$ from an insulated vertex with which $v$ is conductively connected.
\end{enumerate}

As the maximum average degree of $G$ is less than $\frac{11}{3}$, the total charge assigned to the vertices of $G$ by the rule (R1) is negative. Furthermore, none of our discharging rules creates or destroys charge. Therefore, after applying (R1), (R2), (R3), the sum of the charges of the vertices in $G$ is negative. 

Now, we aim to show that for each vertex $v \in V(G)$, the charge of $v$ after applying (R1), (R2), (R3) is nonnegative.
This will imply that the sum of the charges of the vertices in $G$ is nonnegative after applying (R1), (R2), (R3), giving us a contradiction.

\begin{itemize}
    \item If $\deg(v) = 3$, then by Lemma \ref{lem:d-2}, $v$ has at least two $4^+$-neighbors. Therefore, the final charge of $v$ is at least $3 - \frac{11}{3} + 2 \cdot \frac{1}{3} = 0$.
    \item If $\deg(v) = 4$ and $v$ is conductive, then as $v$ has exactly
    one neighbor of degree $3$, the final charge of $v$ is at least $4 - \frac{11}{3} - \frac{1}{3} = 0$.
    \item If $\deg(v) = 4$ and $v$ is stressed, then $v$ has exactly two neighbors of degree $3$.  Hence, after applying (R1) and (R2), $v$ has charge $4 - \frac{11}{3} - 2 \cdot \frac{1}{3} = -\frac{1}{3}$. Then, by Lemma \ref{lem:stressed-insulated}, $v$ is conductively connected with an insulated vertex. Therefore, after applying (R3), $v$ has a final charge of $-\frac{1}{3} + \frac{1}{3} = 0$.
    \item If $\deg(v) = 4$ and $v$ is insulated, then after applying (R1) and (R2), the charge of $v$ is $4 - \frac{11}{3} = \frac{1}{3}$. Then, by Lemma \ref{lem:at-most-one-stressed}, $v$ loses at most $\frac{1}{3}$ charge while applying (R3), and hence the final charge of $v$ is at least $0$.
    \item If $\deg(v) = 5$ and $v$ has $t$ neighbors of degree $3$, 
    then after applying (R1) and (R2), $v$ has a charge of $5 - \frac{11}{3} - \frac{1}{3}t = \frac{4}{3} - \frac{1}{3}t $. Furthermore, by Lemma \ref{lem:d-2}, $t  \leq 3$, and by Lemma \ref{lem:4-t}, $v$ is conductively connected with at most $4-t$ stressed vertices. Therefore, when applying (R3), $v$ loses at most $\frac{1}{3}(4-t)$ charge and hence has a final charge of at least $\frac{4}{3} - \frac{1}{3}t - \frac{1}{3}(4-t) = 0$.
    \item If $\deg(v) \geq 6$ and $v$ has $t$ neighbors of degree $3$, then after applying (R1) and (R2), $v$ has a charge of $\deg(v) - \frac{11}{3} - \frac{1}{3} t$. Then, by Lemma \ref{lem:d-t}, $v$ is conductively connected with at most $\deg(v) - t$ stressed vertices; therefore, after applying (R3), $v$ has a charge of at least $\deg(v) - \frac{11}{3} -\frac{1}{3}t - \frac{1}{3}(\deg(v) - t) = \frac{2}{3}\deg(v) - \frac{11}{3} > 0$.
\end{itemize}
As a $4$-vertex has at most two $3$-neighbors by Lemma \ref{lem:d-2}, every $4$-vertex of $G$ is either insulated, conductive, or stressed. Therefore, our cases are exhaustive, and hence the final charge of each $v \in V(G)$ is nonnegative, giving a contradiction.

From this argument, we see that every graph of maximum average degree less than $\frac{11}{3}$ has a $(4,\xi)$-reducible subgraph. 
In particular, our minimal counterexample $G$ has a $(4,\xi)$-reducible subgraph, and as every subgraph of $G$ also has maximum average degree less than $\frac{11}{3}$, 
every induced subgraph of $G$ has a $(4,\xi)$-reducible subgraph. Therefore, Lemma \ref{lem:main-k-red} implies that $G$ is weighted $\epsilon$-flexibly $4$-choosable for $\epsilon = \frac{2\xi^3}{4} = 2^{-145}$,
and therefore $G$ in fact is not a counterexample.
This completes the proof of Theorem \ref{thm:mad113}.

\section{Further directions}
It is likely that the value $\frac{11}{3}$ in Theorem \ref{thm:mad113} is not optimal and that more involved structural lemmas would lead to a discharging argument with a 
stronger conclusion. 
Given that many arguments in the paper are already highly technical, we have opted to avoid adding further technicalities which may obscure the application of our tools. 

\bibliographystyle{plain}
\bibliography{113bib}

\section{Appendix}
\begin{claim}
    In Lemma \ref{lem:far_neighbors}, for each induced Gallai tree subgraph $T$ of $H$, $\ell_H$ is not bad on $T$.
\end{claim}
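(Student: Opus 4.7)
I would convert the badness condition into an edge-counting inequality and then exploit the rigid structure of $H$ to rule it out. Summing the identity $\ell_H(w) = 4 - \deg_G(w) + \deg_H(w)$ over $V(T)$ and using that every vertex of $V(H)$ has $\deg_G \in \{3,4\}$ (since $u, v$ and the internal vertices of $P$ are $4$-vertices while $R \cup \{x\}$ consists of $3$-vertices), I obtain
\[
\sum_{w \in V(T)} \bigl(\ell_H(w) - \deg_T(w)\bigr) = n_3(T) + \partial(T),
\]
where $n_3(T) := |V(T) \cap (R \cup \{x\})|$ counts the $3$-vertices of $T$ and $\partial(T)$ is the number of edges of $H$ with exactly one endpoint in $V(T)$. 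Since badness forces the left-hand side to equal $|U| \leq \min\{2, |V(T)|\}$, it suffices to show $n_3(T) + \partial(T) > \min\{2, |V(T)|\}$ in every case.

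\textbf{Key structural facts.} Since $P$ is induced and each internal $p \in V(P')$ is a conductive $4$-vertex, the fourth $G$-neighbor of $p$ (a $4^+$-vertex) cannot lie in $V(H)$, so $\deg_H(p) = 3$. Moreover $\deg_H(u), \deg_H(v), \deg_H(x) \geq 2$, and each $r \in R$ is adjacent to some vertex of $V(P')$, so $\deg_H(r) \geq 1$.

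\textbf{Case analysis.} The singleton case $|V(T)| = 1$ is immediate: either $w \in V(P)$ gives $\partial(T) = \deg_H(w) \geq 2$, or $w \in R \cup \{x\}$ contributes $1$ to $n_3(T)$ on top of $\partial(T) = \deg_H(w) \geq 1$. For $|V(T)| \geq 2$, I split by $n_3(T)$. When $n_3(T) = 0$, $T \subseteq V(P)$, and a tally using $\deg_H(p) = 3$ for internal $p$ together with $\deg_H(u), \deg_H(v) \geq 2$ gives $\partial(T) \geq 3$. When $n_3(T) = 1$, writing $V(T) = \{r_0\} \cup S$ with $r_0 \in R \cup \{x\}$ and $S \subseteq V(P)$, a short case split on $|S|$ and on whether $r_0 = x$ versus $r_0 \in R$ yields $\partial(T) \geq 2$. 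When $n_3(T) = 2$, we need only $\partial(T) \geq 1$, which holds unless $V(T) = V(H)$; in that case $n_3(H) = |R| + 1 = 2$, so $|R| = 1$, and since a single $r \in R$ has at most three neighbors in $V(P')$, $|V(P)| \leq 5$. For each $|V(P)| \in \{3, 4, 5\}$, I verify by direct inspection that $H$ is not a Gallai tree: when $|V(P)| \in \{3, 5\}$, the cycle $(x, u, p_1, \ldots, v)$ of length $|V(P)| + 1 \in \{4, 6\}$ is an induced even cycle; when $|V(P)| = 4$, $H$ itself is an induced theta graph with endpoints $p_1, p_2$ and three internally disjoint paths of lengths $1$, $2$, and $4$. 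The Erd\H{o}s--Rubin--Taylor characterization then rules out $H$ as a Gallai tree, so $T = H$ is not a candidate.

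\textbf{Main obstacle.} The final subcase is the most delicate: ruling out $T = H$ requires inspecting three small graphs, using the Erd\H{o}s--Rubin--Taylor characterization of Gallai trees as graphs without induced even cycles or induced theta subgraphs. The earlier cases are routine edge-counting tallies and should present no difficulty.
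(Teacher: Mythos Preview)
Your counting identity $\sum_{w\in V(T)}(\ell_H(w)-\deg_T(w))=n_3(T)+\partial(T)$ is correct, and the strategy is sound: every term on the left is nonnegative, so showing the right side exceeds $\min\{2,|V(T)|\}$ rules out badness. This is a genuinely different route from the paper's proof, which proceeds by a direct structural case split on whether $x\in V(T)$, then on whether $u,v\in V(T)$ and whether $V(P)\subseteq V(T)$, tracking in each configuration which vertices $w$ satisfy $\ell_H(w)\geq\deg_T(w)+2$ or how many satisfy $\ell_H(w)\geq\deg_T(w)+1$. Your approach packages the same information more uniformly: the structural fact $\deg_H(p)=3$ for every internal $p\in V(P')$ does most of the work, and the $n_3$-split replaces the paper's $x\in V(T)$ split. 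The counting argument is cleaner and would transfer more readily to the analogous appendix claims.

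One imprecision to fix: in the final subcase $|V(P)|=4$, your assertion that ``$H$ itself is an induced theta graph'' presumes the third $G$-neighbor of the unique $r\in R$ lies outside $V(H)$. That need not hold --- it could be $u$, $v$, or $x$ --- and then $H$ has an extra edge. The conclusion survives with a one-line replacement: $H$ is $2$-connected on six vertices (removing any single vertex leaves the others joined through the $5$-cycle $(x,u,p_1,p_2,v)$ or through $r$), is not a clique since $uv\notin E(H)$, and is not a cycle since $\deg_H(p_1)\geq 3$; hence $H$ is not a Gallai tree.
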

\begin{proof}
Suppose the claim is false, and let $T$ be a counterexample.     
    If $T$ is a single vertex $w$, then $\ell_H(w) \geq 2 = \deg_T(w) + 2$. 
    If $T$ is a $K_2$, then some $w \in V(T)$ satisfies $\ell_H(w) \geq 3 = \deg_T(w) + 2$.
    Therefore, $|V(T)| \geq 3$.

    Suppose $T$ contains $x$.  If $u \notin V(T)$ or $v \notin V(T)$, then $\ell_H(x) = \deg_H(x) + 1 \geq \deg_T(x) + 2$, a contradiction; therefore, $u,v \in V(T)$. If some $p \in V(P)$ does not belong to $T$, then two vertices $y,z \in V(T) \cap V(P)$ have a neighbor in $P$ which is not a neighbor in $T$; then, three vertices $w \in V(T)$ satisfy $\ell_H(w) \geq \deg_T(w) + 1$, a contradiction. Therefore, $V(P) \subseteq V(T)$.
    As $H[V(P) \cup \{x\}]$ is a cycle in $T$, it follows that $|V(P)| \geq 4$.
    Therefore, if $|R| = 1$, then the single vertex in $R$ has at least two neighbors in $P$ and thus does not belong to $T$, as otherwise $T$ is not a Gallai tree. Otherwise, $|R| \geq 2$, and we consider two distinct $r_1, r_2 \in R$.
    For each $r_i$, either $r_i \in V(T)$, or $r_i \not \in V(T)$ but some neighbor of $r_i$ belongs to $V(T)$. Then, $T$ again has three vertices $w$ for which $\ell_H(w) \geq \deg_T(w) + 1$, a contradiction.

    On the other hand, suppose that $T$ does not contain $x$. Let $P_T \subseteq P$ be a minimal subpath of $P$ containing $V(T) \cap V(P)$. 
    If $|V(P_T)| = 1$, then as we have already ruled out the case that $T$ is a single vertex or a $K_2$, 
    $T$ has at least three vertices,
    and each vertex $w \in V(T)$ satisfies $\ell_H(w) \geq \deg_T(w) + 1$. 
    Therefore, $|V(P_T)| \geq 2$.
    Note that each endpoint $w$ of $P_T$ has a neighbor in $(P \cup \{x\}) \setminus V(T)$; 
    therefore, $\ell_H(w) \geq \deg_T(w) + 1$. If $V(T) \cap V(P) \subsetneq V(P_T)$, then either an
    endpoint $w $ of $P_T$ satisfies $\ell_H(w) \geq \deg_T(w) + 2$, or three vertices $w \in V(T)$ satisfy $\ell_H(w) \geq \deg_T(w) + 1$, a contradiction. If $V(T) \cap V(P) = V(P_T)$, then consider a vertex $r \in N_R(P_T)$
    (note that $N_R(P_T) \neq \emptyset$, as otherwise $V(P_T) = V(P) = \{u,v\}$, a contradiction). If $r \in V(T)$, then $r$ is a third vertex of $T$ satisfying $\ell_H(r) \geq \deg_T(r) + 1$. Otherwise, let $z \in V(P_T)$ be a neighbor of $r$. If $z$ is an endpoint of $P_T$, then $\ell_H(z) \geq \deg_T(z) + 2$; otherwise, $z$ is a third vertex of $T$ satisfying $\ell_H(z) \geq \deg_T(z) + 1$. Hence, $T$ is in fact not a counterexample.
\end{proof}
\begin{claim}
    If $|R| \geq 2$ in the proof of Lemma \ref{lem:cycle}, then for each induced Gallai tree subgraph $T$ of $H$, $\ell_H$ is not bad on $T$.
\end{claim}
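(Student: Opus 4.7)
The plan is to assume for contradiction that some induced Gallai tree subgraph $T$ of $H$ is bad for $\ell_H$, with witness $U \subseteq V(T)$ of size at most $2$ satisfying $(\ell_H - 1_U)(w) = \deg_T(w)$ for every $w \in V(T)$. The structure of $H$ is very rigid: each $v \in V(C)$ is conductive, so $\deg_H(v) = 3$ and $\ell_H(v) = 3$, while each $r \in R$ is a $3$-vertex with $\ell_H(r) = \deg_H(r) + 1$.

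The first step is to extract constraints from the badness witness. For each $r \in R \cap V(T)$ the inequality $\ell_H(r) \geq \deg_T(r) + 1$ forces $r \in U$ and $\deg_H(r) = \deg_T(r)$, so every $H$-neighbor of $r$ lies in $V(T)$; in particular $|R \cap V(T)| \leq 2$. For each $v \in V(C) \cap V(T)$ the condition becomes $\deg_T(v) = 3 - 1_U(v) \in \{2,3\}$, and $\deg_T(v) = 3$ forces all three $H$-neighbors of $v$ (its two $C$-neighbors and its unique $3$-neighbor in $R$) to lie in $V(T)$.

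Next, I would split on $|R \cap V(T)| \in \{0, 1, 2\}$. The case $|R \cap V(T)| = 0$ is quick: $T$ is an induced subgraph of $C$, hence has maximum degree at most $2$, forcing every vertex of $T$ into $U$ and so $|V(T)| \leq 2$; a direct check then rules these cases out. For $|R \cap V(T)| = 1$, let $r$ be the unique vertex and $S = V(C) \cap V(T)$; since every $H$-neighbor of $r$ must lie in $V(T)$ while $R \cap V(T) = \{r\}$, $r$ has no $R$-neighbor at all in $H$, so $N_H(r) \subseteq S$. Analyzing $H[S]$ as an induced subgraph of the cycle $C$ using the degree constraint on each $v \in S - v_0$ (where $v_0$ is the at most one additional element of $U \cap S$) forces $S$ to be either $\{v_0\}$ (giving $\deg_T(v_0) \leq 1$, contradicting $\deg_T(v_0) = 2$) or all of $V(C)$; in the latter case $T$ consists of $C$ together with $r$ attached to at least $|V(C)| - 1$ vertices of $C$, and for each $|V(C)| \in \{3,4,5,6\}$ this either introduces an even induced cycle, forms a diamond block, or forces every vertex of $V(C)$ to share $r$ as its unique $3$-neighbor, contradicting $|R| \geq 2$.

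The bulk of the work lies in the case $|R \cap V(T)| = 2$, where $U = \{r_1, r_2\}$ and every $v \in S$ must satisfy $\deg_T(v) = 3$. This forces $S$ to be closed under taking $C$-neighbors, hence $S = \emptyset$ or $S = V(C)$. The empty case quickly reduces to $T = K_2$ on $\{r_1, r_2\}$ with $\deg_H(r_i) = 1$, impossible since each $r_i$ has at least one $C$-neighbor. When $S = V(C)$, $T$ contains $C$, which already fails the Gallai tree condition when $|V(C)| \in \{4, 6\}$ because $C$ is an even cycle. For $|V(C)| \in \{3, 5\}$ I would partition $V(C)$ according to which of $r_1, r_2$ is each conductive vertex's unique $3$-neighbor and verify, by finite case analysis on the resulting attachment patterns, that every allowable configuration either contains a diamond block (namely $K_4 - e$, which is neither a clique nor an odd cycle) or an even induced cycle in $T$, contradicting the Gallai tree hypothesis. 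The main obstacle is simply organizing this finite case analysis cleanly; each individual case is a short, routine check.
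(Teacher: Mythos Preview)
Your proposal is correct, but it takes a different and more laborious route than the paper. You organize the case analysis by $|R\cap V(T)|\in\{0,1,2\}$, deduce from the badness witness that every cycle vertex not in $U$ must have all three $H$-neighbours in $T$, and then, once $V(C)\subseteq V(T)$ is forced, enumerate the possible values $|V(C)|\in\{3,4,5,6\}$ and attachment patterns for the $r_i$, exhibiting in each an induced diamond $K_4-e$ or an even cycle. This works: your degree constraints indeed force $S\in\{\{v_0\},V(C)\}$ (respectively $S\in\{\emptyset,V(C)\}$), and in every surviving configuration one of the $r_i$ has at least two neighbours on $C$, producing an induced $K_4-e$ (or $C$ itself is even), which sits inside a single block of $T$ that can be neither a clique nor an odd cycle.

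The paper instead splits on $|V(T)\cap V(C)|$ and, in the decisive case $V(C)\subseteq V(T)$, observes directly that any $r\in R\cap V(T)$ with two $C$-neighbours would make the block containing $C\cup\{r\}$ fail to be a clique or odd cycle (using $|R|\geq 2$ only to rule out $N_C(r)=V(C)$). Hence each such $r$ has a unique $C$-neighbour, and counting shows there are at least $|V(C)|\geq 3$ vertices $w$ with $\ell_H(w)\geq\deg_T(w)+1$, more than $|U|$. The paper's argument is shorter and independent of the bound $|V(C)|\leq 6$; your argument is equally valid but leans on that bound for the finite enumeration.
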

\begin{proof}
    Suppose that the claim does not hold, and let $T$ be a counterexample.
    If $T$ is a single vertex $w$, then $\ell_H(w) \geq 2 = \deg_T(w) + 2$, a contradiction.
    If $|V(T)| = 2$, then for some $w \in V(T)$, 
    $\ell_H(w) \geq 3 =  \deg_T(w) + 2$, a contradiction. If $|V(T)| \geq 3$ and $|V(T) \cap V(C)| \leq 1$, then each vertex $w \in V(T)$ satisfies $\ell_H(w) \geq \deg_T(w) + 1$, a contradiction.
    Otherwise, $|V(T) \cap V(C)| \geq 2$.
    If $V(T) \cap V(C) \subsetneq V(C)$, then $T$ has two vertices $w$ with a neighbor in $C$ that is not a neighbor in $T$; hence, each of these vertices satisfies $\ell_H(w) \geq \deg_T(w) + 1$. 
    If $T$ also has a vertex $r \in R$, then $\ell_H(r) \geq \deg_T(r) + 1$, and thus $T$ is not a counterexample. Therefore, some vertex $w \in V(T) \cap V(C)$ has a neighbor in $C$ which is not a neighbor in $T$, as well as a neighbor in $R$ which is not a neighbor in $T$. Therefore, $\ell_H(w) \geq \deg_T(w) + 2$, and $\ell_H$ is not bad on $T$.

     Hence, we may assume $V(C) \subseteq V(T)$. Let $a$ be the number of $3$-vertices in $V(T)$. Let $b$ be the number of vertices $w \in V(C)$ whose $3$-neighbor is not an element of $V(T)$. Consider a vertex $w \in V(C)$ whose $3$-neighbor $r$ is in $V(T)$. Because $|R| \geq 2$, $N_C(r) \subsetneq V(C)$, which means $w$ is the unique neighbor of $r$ in $C$; otherwise, the block containing $C$ and $r$ is neither a clique nor an odd cycle. Hence, $a \geq |V(C)| - b \geq 3 - b$. Since each $r \in R$ satisfies $\ell_H(r) \geq \deg_H(r) + 1 \geq \deg_T(r) + 1$, each $w \in V(C)$ whose $3$-neighbor is not in $T$ satisfies $\ell_H(w) \geq \deg_T(w) + 1$, and $a + b \geq 3$, $T$ has three vertices $w$ for which $\ell_H(w) \geq \deg_T(w) + 1$.
     Therefore, $T$ is not a counterexample to the claim, and the claim holds.
\end{proof}
\begin{claim}
    In the proof of Lemma \ref{lem:stressed-triangle}, if $T$ is an induced Gallai tree subgraph of $H$, then $\ell_H$ is not bad on $T$.
\end{claim}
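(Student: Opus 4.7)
The plan is to suppose for contradiction that some induced Gallai tree subgraph $T$ of $H$ has $\ell_H$ bad on it, with witness set $U \subseteq V(T)$ of size at most $2$. I will first tabulate $\ell_H$ on $H$ and record some rigid structural consequences of badness, and then dispatch a short case analysis on whether $r \in V(T)$ and whether $v \in V(T)$.

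For the setup I would write $V(C) = \{c_1, c_2, c_3\}$ with $v c_1, v c_2 \in E(G)$ and $v c_3 \notin E(G)$, and let $s_1, s_2$ be the two $3$-neighbors of $v$, so $V(H) = V(C) \cup \{v, r, s_1, s_2\}$. Because each $c_i$ is conductive with unique $3$-neighbor $r$, and because $N_G(r) = V(C)$, neither $s_1$ nor $s_2$ has a neighbor in $V(C) \cup \{r\}$; their only possible $H$-neighbors outside themselves are $v$ and each other. A direct computation then gives $\ell_H(v) = 4$, $\ell_H(r) = 4$, $\ell_H(c_1) = \ell_H(c_2) = 4$, $\ell_H(c_3) = 3$, and $\ell_H(s_i) = \deg_H(s_i) + 1 \geq 2$. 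The badness condition $(\ell_H - 1_U)(w) = \deg_T(w)$ with $|U| \leq 2$ forces $\ell_H(w) - \deg_T(w) \in \{0, 1\}$ for every $w \in V(T)$, with at most two vertices at value $1$. Every $3$-vertex $w$ of $G$ that lies in $V(T)$ satisfies $\ell_H(w) = \deg_H(w) + 1 > \deg_T(w)$, hence must lie in $U$, so $|V(T) \cap \{r, s_1, s_2\}| \leq 2$. Moreover, $r \in V(T)$ forces $\deg_T(r) = 3$ and thus $V(C) \subseteq V(T)$, while $v \in V(T)$ forces $\deg_T(v) \geq 3$.

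The case analysis then proceeds as follows. If $r \in V(T)$, then $V(C) \cup \{r\} \subseteq V(T)$, but $\{v, c_1, r, c_2\}$ induces a theta graph (the $4$-cycle $v c_1 r c_2$ with chord $c_1 c_2$), so $v \notin V(T)$; connectedness of $T$ then excludes $s_1$ and $s_2$ as well, forcing $T$ to be the induced $K_4$ on $V(C) \cup \{r\}$. But then $c_1, c_2, r$ each contribute excess $1$, so $|U| \geq 3$, a contradiction. If neither $r$ nor $v$ lies in $V(T)$, then connectedness together with the isolation of $s_1, s_2$ from $V(C) \cup \{r\}$ in $H$ restricts $T$ to a subgraph of $C$ or of $H[\{s_1, s_2\}]$, and in each such Gallai tree the excess at some $c_i$ (using $\ell_H(c_i) \geq 3$) or at $s_i$ exceeds what $|U| \leq 2$ can absorb. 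If $r \notin V(T)$ and $v \in V(T)$, then at least one of $c_1, c_2$ lies in $V(T)$, and the Gallai tree condition together with the induced theta on $\{v, c_1, c_3, c_2\}$ forbids $c_3 \in V(T)$ whenever both $c_1, c_2 \in V(T)$, which pins down a short list of candidate trees; in each, some $c_i$ with $\ell_H(c_i) = 4$ has $\deg_T(c_i) \leq 2$, producing excess $\geq 2$.

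The main obstacle will be the last case ($v \in V(T)$, $r \notin V(T)$), where I would need to enumerate the Gallai tree configurations on subsets of $\{v, c_1, c_2, c_3, s_1, s_2\}$ containing $v$ with $\deg_T(v) \geq 3$, accounting for the optional edge $s_1 s_2$ inside $H$. The key leverage is that $\ell_H(c_1) = \ell_H(c_2) = 4$ precisely because $c_1$ and $c_2$ are the two neighbors of $v$ in $C$, while the Gallai tree constraint always caps $\deg_T(c_i)$ at $2$ in every such configuration; hence the excess $\ell_H(c_i) - \deg_T(c_i) = 4 - \deg_T(c_i) \geq 2$ at one of $c_1, c_2$ already defeats badness.
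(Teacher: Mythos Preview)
Your proof is correct. Your decomposition by membership of $r$ and $v$ in $V(T)$, together with the diamond obstruction, handles all cases cleanly: in particular, once you observe that $H[\{v,c_1,r,c_2\}]$ and $H[\{v,c_1,c_3,c_2\}]$ are both induced diamonds, any $c_i \in V(T) \cap \{c_1,c_2\}$ is forced to have $\deg_T(c_i) \leq 2$ whenever $r \notin V(T)$, giving excess at least $2$ immediately. This is tight and does not in fact require the enumeration you worry about in your last paragraph.

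The paper's proof takes a different route: it cases on the isomorphism type of $T$ (single vertex, $K_2$, triangle, $K_4$, non-clique) and then, in the non-clique case, on whether the two $3$-neighbors of $v$ are adjacent, analyzing terminal blocks of $T$. Your approach is more structural, exploiting the two specific diamonds in $H$ to bound $\deg_T(c_1)$ and $\deg_T(c_2)$ directly; this lets you avoid enumerating Gallai tree shapes and sidesteps the split on whether $s_1 s_2 \in E(H)$ entirely. The paper's approach is more in the spirit of the other appendix claims (which also enumerate by block structure), but yours is shorter for this particular $H$.
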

\begin{proof}
    Suppose the claim is false, and let $T$ be a counterexample.
    If $T$ consists of a single vertex $w$, then $\ell_H(w) \geq 2 = \deg_T(w) + 2$. If $T$ is a $K_2$, then some $w \in V(T)$ satisfies $\ell_H(w) \geq 3 = \deg_T(w) + 2$. If $T$ is a triangle, then each $w \in V(T)$ satisifes $\ell_H(w) \geq 3 = \deg_T(w) + 1$. If $T$ is a $K_4$, then three of the vertices $w \in V(T)$ satisfy $\ell_H(w) = 4 = \deg_T(w) + 1$. Therefore, $T$ is not a clique.

    Write $r_1, r_2$ for the two $3$-neighbors of $v$, and suppose first that $r_1 r_2 \in E(H)$. Then, $\ell_H(w) \geq 3$ for each $w \in V(H)$, so no $w \in V(T)$ satisfies $\deg_T(w) = 1$. As $T$ is not a clique, it follows that $T$ has two triangle terminal blocks. Thus, $T$ has four vertices $w$ for which $\ell_H(w) \geq 3 \geq \deg_T(w) + 1$, a contradiction.

    Now, suppose that $r_1 r_2 \not \in E(H)$. Suppose $T$ has a triangle terminal block $B$. If  some $w \in \{v,r\}$ belongs to $B$ and is not a cut-vertex of $T$, then $\ell_H(w) = 4 = \deg_T(w) + 2$.
    Otherwise, $B$ contains some non cut-vertex $w \in N(v)$ for which $\ell_H(w) = \deg_H(w) = 4 = \deg_T(w) + 2$.    
    Therefore, the only remaining case is that $T$ is a tree with at least two edges. If $T$ has a leaf $w \in V(C) \cup \{r\}$, then $\ell_H(w) \geq 3 = \deg_T(w) + 2$. Otherwise, $T = H[N_R[v]]$, and hence all three vertices $w \in V(T)$ satisfy $\ell_H(w) \geq \deg_T(w) + 1$. Therefore, $T$ is not a counterexample, and the claim holds.
\end{proof}

\end{document}